\pgfplotsset{compat=newest, compat/show suggested version=false}
\newtheorem{theorem}{Theorem}[section]
\newtheorem{proposition}[theorem]{Proposition}
\newtheorem{lemma}[theorem]{Lemma}
\newtheorem{remark}[theorem]{\textit{Remark}}
\newtheorem{definition}[theorem]{Definition}
\newtheorem{assumption}[theorem]{Assumption}
\crefname{assumption}{Assumption}{Assumptions}
\crefname{problem}{Problem}{Problems}
\newcommand{\beq}{\begin{equation}}
\newcommand{\eeq}{\end{equation}}
\newcommand{\sgn}{\operatorname{sgn}}
\newcommand{\supp}{\text{supp}\,}
\newcommand{\divv}{\mathrm{div}\,}
\def\cD{\mathcal{D}}
\def\cF{\mathcal{F}}
\def\cG{\mathcal{G}}
\def\cT{\mathcal{T}}
\def\cH{\mathcal{H}}
\def\cP{\mathcal{P}}
\def\cS{\mathcal{S}}
\def\cT{\mathcal{T}}
\def\C{\mathbb{C}}
\def\I{\mathbb{I}}
\def\N{\mathbb{N}}
\def\R{\mathbb{R}}
\def\S{\mathbb{S}}
\def\sS{\mathscr{S}}
\def\fS{\mathfrak{S}}
\def\eb{\bm e}
\def\pb{\bm p}
\def\vb{\bm v}
\def\wb{\bm w}
\def\xb{\bm x}
\def\yb{\bm y}
\def\zb{\bm z}
\def\Pb{\bm P}
\def\nub{\bm\nu}
\def\xib{\bm\xi}
\def\phib{\bm\phi}
\def\etab{\bm\eta}
\def\mub{\bm\mu}
\def\psib{\bm\psi}
\def\lab{\bm\lambda}
\newcommand{\al}{\alpha}
\newcommand{\del}{\delta}
\newcommand{\ep}{\epsilon}
\newcommand{\sig}{\sigma}
\newcommand{\om}{\omega}
\newcommand{\Ga}{{\Gamma}}
\newcommand{\Sig}{\Sigma}
\newcommand{\Om}{\Omega}
\title[Compactness for nonlocal Dirichlet energy with applications]{Compactness results for a Dirichlet energy of nonlocal gradient with applications}
\author{Zhaolong Han}
\address{Department of Mathematics, University of California, San Diego, CA 92093, United States} 
\email{zhhan@ucsd.edu}
\author{Tadele Mengesha}
\address{Department of Mathematics, University of Tennessee, Knoxville, TN 37996, United States} 
\email{mengesha@utk.edu}
\author{Xiaochuan Tian}
\address{Department of Mathematics, University of California, San Diego, CA 92093, United States} 
\email{xctian@ucsd.edu}
\thanks{ZH and XT's research is supported in part by NSF grants DMS-2111608 and DMS-2240180. TM's research is supported by NSF grant DMS-2206252. 
The early stages of this project were conducted during a SQuaRE at the American Institute for Mathematics (AIM) 
titled {\it Variational methods for multiscale and nonlinear nonlocal models with applications to peridynamics in San Jose, CA, in May 2023.}
The authors thank AIM for providing a supportive and mathematically rich environment. }
\date{}
\begin{document}

\begin{abstract}
    We prove two compactness results for function spaces with finite Dirichlet energy of half-space nonlocal gradients. In each of these results, we provide sufficient conditions on a sequence of kernel functions that guarantee the asymptotic compact embedding of the associated nonlocal function spaces into the class of square-integrable functions. Moreover, we will demonstrate that the sequence of nonlocal function spaces converges in an appropriate sense to a limiting function space. As an application, we prove uniform Poincar\'e-type inequalities for sequence of half-space gradient operators.  We also apply the compactness result to demonstrate the convergence of appropriately parameterized nonlocal heterogeneous anisotropic diffusion problems. We will construct asymptotically compatible schemes for these type of problems. Another application concerns the convergence and robust discretization of a nonlocal optimal control problem.
\end{abstract}

\subjclass[2020]{45A05, 46E30, 46E36, 65R20,74G65}
\keywords{Nonlocal gradient, half-space gradient, nonlocal Dirichlet energy, compactness, Bourgain-Brezis-Mironescu, uniform Poincar\'e inequality, asymptotically compatible schemes, optimal control problem}

\maketitle

\section{Introduction}
In this paper, we revisit the half-space nonlocal gradient operator studied in \cite{han2023nonlocal} defined in the principal value sense as 
\[
\mathcal{G}^{\nub}_{w}u(\xb) = \lim_{\epsilon \to 0}\int_{\cH_{\nub} \setminus B_{\epsilon}(\bm 0)} \frac{\zb}{|\zb|} (u(\xb + \zb)-u(\xb))w(\zb)d\zb,\quad\xb\in \R^d
\]
where for $d\geq 1$,  $u:\mathbb{R}^d \to \mathbb{R}$ is a measurable function, $\nub \in \mathbb{R}^{d}$ is a fixed unit vector, $\cH_{\nub}$ is the half-space $\cH_{\nub} = \{\zb\in \mathbb{R}^{d}: \zb\cdot \nub \geq 0\}$, and $w$ is a nonnegative kernel function that satisfies appropriate conditions specified later.
Additionally, inspired by the integration by parts formula, a distributional nonlocal gradient $\mathfrak{G}_w^{\nub}$ is introduced (see \cref{eq:distributional_gradient}), and the related nonlocal Sobolev-type space $\mathcal{S}_{w}^{\nub}(\Omega)$, defined by 
\[
\mathcal{S}_{w}^{\nub}(\Omega) = \{u\in L^2(\mathbb{R}^{d}): \mathfrak{G}^{\nub}_w u\in L^2(\R^d;\R^d), u=0\text{ a.e. in }\mathbb{R}^{d}\setminus\Omega\}
\]
for an open domain $\Omega\subset \R^d$ is studied in \cite{han2023nonlocal}.

The study of nonlocal operators is largely motivated by peridynamics, introduced by Silling \cite{silling2000reformulation}, which has been effective in modeling materials undergoing large strains and fractures. Within the peridynamics theory, a specific category of material models known as correspondence models employs nonlocal gradients and classical stress-strain relationships to calculate the forces between particles \cite{foster2010viscoplasticity,SEWX07,tupek2014extended,warren2009nonordinary}. Despite their effectiveness, correspondence models are challenged by material instability issues \cite{silling2017stability}, which, however, can be addressed through the use of half-space gradient operators. The exploration of such operators began with the work of Lee and Du \cite{lee2020nonlocal} in the context of periodic domains. Subsequent developments, carried out by Han and Tian \cite{han2023nonlocal}, extended the study to general domains in \(\R^d\), where they showed a nonlocal Poincar\'e inequality on $\mathcal{S}_{w}^{\nub}(\Omega)$ with $\Omega$ being a bounded domain with continuous boundary. The well-posedness of nonlocal diffusion problems and peridynamics correspondence models using half-space nonlocal operators is therefore implied via the application Lax-Milgram theorem. For additional studies related to the nonlocal gradient, please refer to \cite{arroyo2022functional,bellido2023nonlocal,bellido2024nonlocal,cueto2024gamma,delia2022connections,mengesha2015localization,mengesha2016characterization,shieh2015new,shieh2018new} and the references therein.

This work primarily aims to analyze the convergence of variational problems associated with the nonlocal gradient $\mathfrak{G}_w^{\nub}$ and the energy space $\mathcal{S}_{w}^{\nub}(\Omega)$. In particular, the focus is on understanding the convergence of solutions when the variational problems are associated with a sequence of kernels $w_n$ that approaches a specific limit as $n$ approaches infinity. The key for showing such convergence relies on establishing compactness results in the limit as $n$ approaches infinity in the spirit of Bourgain, Brezis and Mironescu \cite{bourgain2001another}.  In this work, 
we prove two types of compactness results, each associated with a particular type of kernel sequence convergence. One type of kernel convergence, as detailed in \Cref{ass:kernel4cptthm}, is fundamentally characterized by the convergence of $\min(1, |\zb|)w_n(\zb)$ to a measure with atomic mass at the origin as $n\to\infty$ (or as $\delta\to 0$ in the context of \Cref{ass:kernel4cptthm}). Such a result is crucial for establishing the convergence of solutions from nonlocal variational problems to their local counterparts as nonlocality diminishes. A different kind of kernel convergence presupposes $\{ w_n \}_n$ is a sequence of nonnegative  radial kernels converges increasingly to a kernel $w$ almost everywhere, i.e., $0\le w_n\nearrow w$ a.e., where $w$ is nonintegrable near the origin (for detailed assumptions, refer to \Cref{assu:nonint_kernel}), resulting in nonlocal-to-nonlocal convergence. A key intermediate result for this form of compactness is the locally compact embedding of $\mathcal{S}_{w}^{\nub}(\R^d)$ into $L^2(\R^d)$, which extends the classical compact embedding of $H^s$ ($s\in (0,1)$) into $L^2$ by choosing $w(\zb)$ as the Riesz fractional kernel $C/|\zb|^{d+s}$ \cite{bellido2023nonlocal,shieh2015new,shieh2018new}. Furthermore, the compactness results yield Poincar\'e inequalities that remain uniform regardless of the size of $n$. The uniform Poincar\'e inequalities also serve as essential intermediary steps in showing the convergence of variational solutions.  

To establish the compactness results, we rely on a variant of the Riesz-Kolmogorov-Fr\'echet compactness criterion \cite{mengesha2012nonlocal}. Addressing nonlocal Dirichlet energies with nonlocal gradients presents more challenges compared to the ``double-integral'' type nonlocal energies studies in previous works, for example, \cite{bourgain2001another,mengesha2014nonlocal}. In recent studies \cite{bellido2023nonlocal,cueto2023variational}, compactness results for nonlocal energies with truncated Riesz fractional gradients are established where a nonlocal version of the fundamental theorem of calculus plays a pivotal role. However, none of the existing methodologies are suitable for addressing the challenges posed by nonlocal energies defined through a half-space gradient. In our study, to show the Riesz-Kolmogorov-Fr\'echet compactness criterion, we employ Fourier analysis along with fine estimates (see \Cref{sec:Op_revisited}) of the Fourier symbols for half-space nonlocal gradient operators.

The compactness results are applied to obtain convergence of parameterized nonlocal heterogeneous anisotropic diffusion problems, together with the asymptotic compatibility of their Galerkin approximations with appropriate finite dimensional subspaces. The concept of asymptotic compatibility is introduced by \cite{tian2014asymptotically} and is aimed for robust numerical discretization of parameterized nonlocal variational problems as the nonlocal modeling parameter and the discretization parameter both go to an asymptotic limit. We summarize the main convergence theorem in \cite{tian2014asymptotically} with a slight relaxation on the conditions, i.e., the assumption that strong convergence of operators acting on a dense subset is replaced by the convergence of bilinear forms in a weak sense. It is worth noting that in the case of $0\le w_n\nearrow w$ for a sufficiently singular kernel $w$,  the Galerkin approximation to the nonlocal problem associated with $w_n$ serves as a nonconforming Discontinuous Galerkin (DG) scheme for the nonlocal problem tied to $w$, as discussed in \cite{tian2015nonconforming}. Another application we discuss is an optimal control problem with nonlocal diffusion equation as the constraint. The approach we follow and the results we obtain for this part of the study parallel  \cite{mengesha2023optimal} an optimal control problem is analyzed where the constraint is the linear bond-based peridynamics model. 

The paper is organized as follows. In \Cref{sec:Op_revisited}, we recall the notion and some basic properties of nonlocal half-space operators studied in \cite{han2023nonlocal}. Further, the nonlocal function spaces are reformulated using the Fourier symbol of the nonlocal gradient, with a crucial lower bound estimate for the symbol presented. The major compactness results are established in \Cref{sec:cpt}, utilizing the symbol estimates. Applications are demonstrated through the proof of convergence for nonlocal diffusion problems and the establishment of asymptotic compatibility in Section \ref{sec:conv_and_AC}. Additionally, the study of the optimal control problem is undertaken in Section \ref{sec:optimal_control}. Finally, we conclude in \Cref{sec:conclusion}.

\section{Revisiting Nonlocal Half-space Operators and associated function spaces}\label{sec:Op_revisited}
\subsection{Nonlocal Half-space Operators}
In this section, we revisit definitions of nonlocal half-space vector operators as introduced in \cite{han2023nonlocal} and present some new properties.

Let $\nub\in\R^d$ be a fixed unit vector. Denote by $\chi_{\nub}(\zb)$ the characteristic function of the half-space $\mathcal{H}_{\nub}:=\{\zb\in \mathbb{R}^d:\zb\cdot\nub\geq 0\}$ parameterized by the unit vector $\nub$. Throughout this paper, we assume that $w$ satisfies the following conditions.
\beq
\label{eq:kernelassumption}
\left\{
\begin{aligned}
& w\in L^1_{\mathrm{loc}}(\R^d\backslash \{\bm 0\}),\ w\geq 0 \ \text{a.e.}, \ w\ \text{is radial}; \\
&   M_w^1:=\int_{|\zb|\le 1}|\zb|w(\zb)d\zb\in (0, \infty) \quad \textrm{and}\quad M_w^2:= \int_{|\zb|>1} w(\zb)d\zb<\infty.\\
%&M^{1}_{w} > 0
\end{aligned}
\right.
\eeq
We notice that by assumption \eqref{eq:kernelassumption}  
there exists $\ep_0>0$ such that 
\begin{equation}\label{eq:epsilon_0}
0< \int_{\ep_0<|\zb|<1}w(\zb)d\zb <\infty. 
\end{equation}

We recall that the nonlocal half-space  gradient, divergence, and curl operators are defined in the principal value sense (see \cite[Definition~2.1]{han2023nonlocal}). For smooth functions $u\in C^1_c(\R^d)$ and $\vb\in C^1_c(\R^d;\R^d)$, the operators are meaningful pointwise  and are given as (\cite[Lemma~2.1]{han2023nonlocal})
\begin{equation}\label{hbgrad}
    \mathcal{G}^{\nub}_w u(\xb)=\int_{\R^d}\chi_{\nub}(\yb-\xb)\frac{\yb-\xb}{|\yb-\xb|}(u(\yb)-u(\xb))w(\yb-\xb)d\yb,\quad \xb\in \R^d,
\end{equation}
\begin{equation}\label{hbdiv}
    \mathcal{D}_w^{\nub} \vb(\xb)=\int_{\R^d}\chi_{\nub}(\yb-\xb)\frac{\yb-\xb}{|\yb-\xb|}\cdot (\vb(\yb)-\vb(\xb))w(\yb-\xb)d\yb,\quad\xb\in \R^d,
\end{equation}
and for $p\in [1,\infty]$, $\mathcal{G}_w^{\nub} u \in L^p(\R^d;\R^d)$, $\mathcal{D}_w^{\nub} \vb\in L^p(\R^d)$ with the estimate that for a constant $C$ depending on $d$ and $p$, 
\begin{equation}\label{estgradonwholespace}
    \|\mathcal{G}_w^{\nub} u\|_{L^p(\R^d;\R^d)}\le C \left(M_w^1\|\nabla u\|_{L^p(\R^d;\R^d)} + M_w^2 \|u\|_{L^p(\R^d)}\right), \,\textrm{and}
\end{equation}
\begin{equation}\label{estdivonwholespace}
    \|\mathcal{D}_w^{\nub} \vb\|_{L^p(\R^d)} \le C \left(M_w^1\|\nabla \vb\|_{L^p(\R^d;\R^{d\times d})} + M_w^2 \|\vb\|_{L^p(\R^d;\R^d)}\right).
\end{equation}
In addition, the nonlocal gradient and nonlocal divergence are related via a nonlocal integration by parts formula 
\[
\int_{\mathbb{R}^{d}} \mathcal{G}^{\nub}_{w} u(\xb)\cdot {\vb}(\xb) d \xb = -\int_{\mathbb{R}^{d}} u(\xb)\mathcal{D}^{-\nub}_{w} {\vb}(\xb) d\xb,
\]
for any $u\in C^1_c(\R^d)$ and $\vb\in C^1_c(\R^d;\R^d)$. In fact, as shown in \cite[Proposition~2.4]{han2023nonlocal}, the above formula is valid for $u\in L^{1}(\mathbb{R}^{d})$ such that $w(\xb-\yb)|u(\xb) -u(\yb)| \in L^{1}(\mathbb{R}^d\times \mathbb{R}^d)$. 
This formula motivates the introduction of the distributional nonlocal gradient operator $\mathfrak{G}_w^{\nub} u\in (C^\infty_c(\R^d;\R^{d}))'$ defined  for $u\in L^p(\R^d)$, $1\le p\le\infty$, as 
\begin{equation}
\label{eq:distributional_gradient}
    \langle \mathfrak{G}_w^{\nub} u,\bm\phi\rangle:=-\int_{\R^d} u(\xb)\cdot\mathcal{D}_w^{-\nub}\bm\phi(\xb)d\xb ,\quad\forall \bm\phi\in C_c^\infty(\R^d;\R^{d}), 
\end{equation}
(see \cite[Definition~2.2]{han2023nonlocal}).

It has also been demonstrated in \cite{lee2020nonlocal} that for $u\in C^\infty_c(\mathbb{R}^d)$, $\vb\in C^\infty_c(\mathbb{R}^d;\R^{d})$, and $\xib\in\R^d$
\begin{equation}\label{eq:fourierhbgrad}
    \mathcal{F}(\mathcal{G}_w^{\nub} u)(\bm\xi)=\bm\lambda_w^{\nub}(\bm\xi) \hat{u}(\bm\xi), \text{ and }
\end{equation}
\begin{equation}\label{eq:fourierhbdiv}
    \mathcal{F}(\mathcal{D}^{\nub}_w  \vb)(\bm\xi)=\bm\lambda_w^{\nub}(\bm\xi)^T\hat{\vb}(\bm\xi)
\end{equation} where the Fourier symbol $\bm\lambda_w^{\nub}(\bm\xi)$ is given by 
\begin{equation}\label{deffouriersymboldiv}
    \bm\lambda_w^{\nub}(\bm\xi):=\int_{\mathbb{R}^d}\chi_{\nub}(\zb)\frac{\zb}{|\zb|}w(\zb)(e^{2\pi i\bm\xi\cdot \zb}-1)d\zb.
\end{equation}
In the above, and hereafter, $\mathcal{F}$ and $\mathcal{F}^{-1}$ represent the Fourier transform and the inverse Fourier transform, respectively. We will also use  the $\hat{\cdot}$-notation for Fourier transform interchangeably. That is, 
$\mathcal{F}(u) = \hat{u}$.  

By \cite[Lemma~2.4]{han2023nonlocal}, the Fourier symbol $\bm\lambda_w^{\nub}(\bm\xi)$ is bounded by a linear function, i.e.,
\begin{equation}\label{Fsym0thorderest}
    |\bm\lambda_w^{\nub}(\xib)|\le 2\sqrt{2}\pi M_w^1|\xib|+\sqrt{2}M_w^2,\quad\xib\in\R^d.
\end{equation}
In fact, in  \cite[Lemma~5.1]{han2023nonlocal}, the following estimates on the behavior of $\bm\lambda_w^{\nub}(\xib)$ near the origin and at infinity are proved. It is worth noting that in \cite{han2023nonlocal}, it is assumed that the kernel does not vanish in a neighborhood of the origin. Nevertheless, the arguments presented in that work remain valid with this assumption replaced by $w\neq 0\in  L^1_{\mathrm{loc}}(\R^d\backslash \{\bm 0\})$, in particular $w$ satisfying \eqref{eq:kernelassumption}. 

\begin{lemma}\label{lem:labbdd_original}
    Let \(w\) satisfy \eqref{eq:kernelassumption}. 
    \begin{enumerate}
        \item For any \(N>0\) there exists a constant \(C=C(N,w,d)>0\) such that 
        \[|\bm\lambda^{\nub}_{w}(\xib)|\ge C\int_{\R^d}\min(1,w(\xb))d\xb,\quad\forall |\xib|\ge N .\]
        \item Suppose that in addition \(w\) has finite first moment, i.e., \(\int_{\R^d} |\zb|w(\zb)d\zb <\infty\).
        Then there exist constants $N_1=N_1(w,d)\in (0,1)$ and $C_1=C_1(w,d)>0$ such that \[|\bm\lambda^{\nub}_{w}(\xib)|\ge C_1|\xib|,\quad\forall |\xib|\le N_1. \]
    \end{enumerate}
\end{lemma}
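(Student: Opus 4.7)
My plan is to lower bound $|\bm\lambda_w^{\nub}(\xib)|$ via the real part of $\bm\lambda_w^{\nub}(\xib)\cdot\nub$ in Part (1), and via the first-order Taylor expansion of its imaginary part at $\xib=\bm 0$ in Part (2).

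For Part (1), the chain $|\bm\lambda_w^{\nub}(\xib)|\ge |\bm\lambda_w^{\nub}(\xib)\cdot\nub|\ge |\operatorname{Re}(\bm\lambda_w^{\nub}(\xib)\cdot\nub)|$ followed by direct computation from \eqref{deffouriersymboldiv} gives
\[
|\bm\lambda_w^{\nub}(\xib)|\ge \int_{\cH_\nub}\frac{\zb\cdot\nub}{|\zb|}w(\zb)(1-\cos(2\pi\xib\cdot\zb))\,d\zb.
\]
I would then restrict to the cone $K:=\{\zb:\zb\cdot\nub\ge|\zb|/2\}\subset\cH_\nub$, where the angular factor is $\ge 1/2$, use the pointwise inequality $w\ge\min(1,w)=:\tilde w$, and recognize the resulting bound as a multiple of $\int g - \operatorname{Re}(\hat g(\xib))$ with $g:=\chi_K\tilde w$. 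The assumptions on $w$ give $g\in L^1(\R^d)$, and by radiality of $\tilde w$, $\int g = c_d\int_{\R^d}\min(1,w)\,d\xb$ for a positive dimensional constant $c_d$. The crux of Part (1) is to show $\int g-\operatorname{Re}(\hat g(\xib))\ge c(N,w,d)>0$ uniformly for $|\xib|\ge N$. For this I would combine two observations: (a) on any compact annulus $\{N\le|\xib|\le R\}$ the quantity is continuous and strictly positive, since the equality $|\hat g(\xib)|=\int g$ forces $\cos(2\pi\xib\cdot\zb)=1$ for a.e.\ $\zb\in\supp g$, impossible for $\xib\ne\bm 0$ because $\supp g$ has positive $d$-dimensional Lebesgue measure (radiality of $w$ and \eqref{eq:epsilon_0} yield this); (b) for $|\xib|\ge R$ large, Riemann-Lebesgue gives $|\hat g(\xib)|\le\tfrac12\int g$.

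For Part (2), I would work with the imaginary part $\operatorname{Im}(\bm\lambda_w^{\nub}(\xib))=\int_{\cH_\nub}\tfrac{\zb}{|\zb|}w(\zb)\sin(2\pi\xib\cdot\zb)\,d\zb$ and Taylor-expand $\sin$ to obtain $\operatorname{Im}(\bm\lambda_w^{\nub}(\xib))=2\pi\bm M\xib + \bm E(\xib)$, where
\[
\bm M:=\int_{\cH_\nub}\frac{\zb\otimes\zb}{|\zb|}w(\zb)\,d\zb.
\]
The finite first moment hypothesis makes $\bm M$ a well-defined real symmetric matrix; radial symmetry of $w$ forces the form $\bm M=a\,\nub\otimes\nub+b(I-\nub\otimes\nub)$ with $a,b>0$ nontrivial weighted integrals of $w$, so $\bm M\succeq\lambda_{\min}I$ for some $\lambda_{\min}=\lambda_{\min}(w,d)>0$. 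Using the elementary bound $|\sin t-t|\le |t|\min(t^2/6,\,2)$, the remainder is controlled by
\[
|\bm E(\xib)|\le 2\pi|\xib|\int_{\R^d}|\zb|w(\zb)\min\Big(\tfrac{(2\pi|\xib||\zb|)^2}{6},\,2\Big)\,d\zb,
\]
and by dominated convergence (dominant $2|\zb|w(\zb)\in L^1$) the integral factor tends to $0$ as $|\xib|\to 0$. Choosing $N_1>0$ small enough that this factor is $\le\pi\lambda_{\min}$ yields $|\operatorname{Im}(\bm\lambda_w^{\nub}(\xib))|\ge\pi\lambda_{\min}|\xib|$, and hence $|\bm\lambda_w^{\nub}(\xib)|\ge C_1|\xib|$ on $\{|\xib|\le N_1\}$.

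The step I expect to be the main obstacle is the strict positivity statement in Part (1)(a) — ruling out $\hat g(\xib)=\int g$ for any $\xib\ne\bm 0$. This ultimately rests on $\supp g$ having positive $d$-dimensional Lebesgue measure, which I would obtain by combining radiality of $w$, the nondegeneracy condition \eqref{eq:epsilon_0}, and the fact that the cone $K$ cuts any annulus into a set of positive measure. Part (2) will be more routine, the main care being the Taylor remainder estimate under only the finite first moment assumption on $w$.
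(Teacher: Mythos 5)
Your proof is correct. Since the paper delegates the proof of this lemma to the external reference [han2023nonlocal, Lemma 5.1], a verbatim comparison with ``the paper's own proof'' is not possible from the text; however, the paper's Appendix proof of the refinement \Cref{lem:lablowerbdd_xilarge} does start exactly from the same quantity you use, namely the real part of $\bm\lambda^{\nub}_w$ paired against the direction $\nub$ (after a rotation $R_{\nub}$), so the two approaches share the same point of departure. Where they diverge is in how the lower bound on $|\xib|\ge N$ is extracted: you use a soft argument (reduce to the cone $K=\{\zb\cdot\nub\ge|\zb|/2\}$, replace $w$ by $g=\chi_K\min(1,w)\in L^1$, then combine strict positivity of $\int g(1-\cos(2\pi\xib\cdot\zb))d\zb$ for $\xib\neq\bm 0$ with continuity on a compact annulus and Riemann--Lebesgue for large $|\xib|$), whereas the paper's appendix proof for the stronger Lemma 2.2 carries out an explicit, case-by-case decomposition in spherical coordinates to obtain the quantitative tail bound $C\int_{|\zb|>N\ep/|\xib|}w(\zb)d\zb$. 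Your route is cleaner for the qualitative statement here but does not yield such a quantitative refinement; the paper's route is what is actually needed for the main compactness results. Your Part (2) — Taylor expansion of $\operatorname{Im}\bm\lambda^{\nub}_w$ around the origin, positive-definiteness of $\bm M=\int_{\cH_{\nub}}\frac{\zb\otimes\zb}{|\zb|}w\,d\zb$, and a dominated-convergence estimate of the remainder under the finite-first-moment hypothesis — is correct and is the standard argument; as a small remark, radiality makes $\bm M=\tfrac{1}{2d}\bigl(\int|\zb|w(\zb)d\zb\bigr)\I_d$ exactly (see the identity used in the proof of \Cref{prop:locallim_ptws_lp}), so $a=b$ and the invariance-under-rotations-fixing-$\nub$ detour is not strictly necessary.
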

For the compactness result we obtain below, we will need an improved lower bound of \(\lab^{\nub}_w (\xib)\) for large $\xib$ than provided in the first part of the above lemma.
The following lemma has this improved lower bound whose proof will be postponed to \Cref{subsec:Appendix1} since it is long and technical.
\begin{lemma}\label{lem:lablowerbdd_xilarge}
    Let \(w\) satisfy \eqref{eq:kernelassumption}. Assume in addition that \(w\) is nonincreasing if \(d=1\). Then for any \(N>0\) and $\ep>0$, there exists a constant \(C=C(N\ep,d)>0\) such that 
    \[|\lab^{\nub}_w(\xib)|\ge|\Re(\lab^{\nub}_{w})(\xib)|\ge C\int_{|\zb|>\frac{N\ep}{|\xib|}} w(\zb)d\zb,\quad\forall |\xib|>N.\]
    Here $\Re(\lab^{\nub}_{w})$ stands for the real part of $\lab^{\nub}_{w}$.
\end{lemma}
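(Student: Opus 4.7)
The plan is to reduce $|\Re(\lab^{\nub}_{w})(\xib)|$ to a scalar lower bound by projecting onto $-\nub$. Using \eqref{deffouriersymboldiv}, the identity $1-\cos(2x)=2\sin^2 x$, and the fact that $\zb\cdot\nub\ge 0$ on $\cH_{\nub}$,
\begin{equation*}
|\Re(\lab^{\nub}_{w})(\xib)|\ \ge\ -\Re(\lab^{\nub}_{w})(\xib)\cdot\nub\ =\ 2\int_{\cH_{\nub}}\frac{\zb\cdot\nub}{|\zb|}\,w(|\zb|)\,\sin^2(\pi\xib\cdot\zb)\,d\zb\ \ge\ 0.
\end{equation*}
Switching to polar coordinates $\zb=r\sigb$ and setting $\eta:=N\ep$, the last quantity exceeds
\begin{equation*}
2\int_{\eta/|\xib|}^\infty w(r)\,r^{d-1}\,K(r|\xib|,\hat\xib)\,dr,\qquad K(s,\hat\xib):=\int_{S^{d-1}\cap\cH_{\nub}}(\sigb\cdot\nub)\sin^2(\pi s\,\hat\xib\cdot\sigb)\,d\sigb,
\end{equation*}
with $\hat\xib=\xib/|\xib|$. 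Since $\int_{|\zb|>\eta/|\xib|}w(\zb)\,d\zb=\omega_{d-1}\int_{\eta/|\xib|}^\infty w(r)\,r^{d-1}\,dr$, the claim boils down to a uniform lower bound $K(s,\hat\xib)\ge c(\eta)>0$ whenever $s\ge\eta$ and $\hat\xib\in S^{d-1}$.

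For $d\ge 2$, I would obtain this uniform bound by a compactness argument based on three facts: (i) $K$ is continuous on $[0,\infty)\times S^{d-1}$; (ii) $K(s,\hat\xib)>0$ for every $s>0$, for the vanishing of $\sin^2(\pi s\,\hat\xib\cdot\sigb)$ a.e.\ on the hemisphere would force $\sigb\mapsto s\,\hat\xib\cdot\sigb$ to take integer values on a positive-measure set, contradicting that this smooth non-constant function has level sets of Hausdorff dimension $d-2$; (iii) as $s\to\infty$, $K(s,\hat\xib)\to \tfrac{1}{2}\int_{S^{d-1}\cap\cH_{\nub}}(\sigb\cdot\nub)\,d\sigb>0$ \emph{uniformly} in $\hat\xib$, since the error is the Fourier transform of the continuous hemispherical density $(\sigb\cdot\nub)\mathbf{1}_{\cH_{\nub}}$ on $S^{d-1}$ evaluated at $s\hat\xib$, which decays uniformly in direction by the Riemann--Lebesgue/stationary-phase estimate for measures supported on surfaces of nonvanishing Gaussian curvature. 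Any minimizing sequence $(s_n,\hat\xib_n)$ of $K$ with $s_n\ge\eta$ then contradicts either (ii) (when $s_n$ admits a bounded subsequence, yielding a zero limit of a strictly positive continuous function) or (iii) (when $s_n\to\infty$).

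For $d=1$ the angular slice collapses to a single point and $K$ oscillates, so the monotonicity assumption on $w$ becomes essential. With $\nub=1$ the task reduces to proving $\int_0^\infty w(r)\sin^2(\pi|\xib|r)\,dr\ge C(\eta)\int_{\eta/|\xib|}^\infty w(r)\,dr$. Using the layer-cake representation $w(r)=\int_0^\infty \mathbf{1}_{[0,\tau(t))}(r)\,dt$, where $\tau$ is the distribution inverse of the nonincreasing $w$, Fubini reduces the estimate to the scale-invariant one-variable inequality
\begin{equation*}
\int_0^s \sin^2(\pi u)\,du\ \ge\ C(\eta)\,(s-\eta)_+\quad\text{for all } s>0,
\end{equation*}
which holds because the ratio on the right is continuous on $(\eta,\infty)$, blows up as $s\to\eta^+$ (since $\int_0^\eta\sin^2>0$), and converges to $1/2$ as $s\to\infty$; the infimum is therefore a strictly positive constant $C(\eta)$.

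The principal technical obstacle is the uniform-in-direction decay in step (iii) for $d\ge 2$. I would handle it by density: approximate $(\sigb\cdot\nub)\mathbf{1}_{\cH_{\nub}}$ in $L^1(S^{d-1})$ by smooth functions, whose Fourier transforms decay uniformly like $|\xi|^{-(d-1)/2}$ via stationary phase, and absorb the $L^1$ remainder into a uniform error. The fact that the constant depends only on the product $N\ep$ (and $d$) mirrors the scale-invariance of $K$ in its first argument $s=r|\xib|$, while the condition $|\xib|>N$ is used only implicitly to ensure the lower cutoff $\eta/|\xib|$ is bounded above by $\ep$.
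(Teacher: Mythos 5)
Your proposal is correct, and it takes a genuinely different route from the paper's. Both start by projecting $\Re(\lab^{\nub}_w)$ onto $\nub$ (equivalently, rotating so that $\nub=\eb_1$) and throwing away the small-$|\zb|$ part, but after that the arguments diverge. For $d\ge 2$ the paper constructs an explicit orthonormal frame adapted to the direction $\mub=R^T_\nub\xib/|R^T_\nub\xib|$, splits into cases according to which new basis vector has the largest $\eb_1$-component, and in each case reduces to explicit one-variable functions $J(r)$ or $H(r)$ whose positive limits at infinity are checked directly. You instead pass to polar coordinates at once and isolate the scale-invariant hemispherical symbol $K(s,\hat\xib)=\int_{S^{d-1}\cap\cH_\nub}(\sigb\cdot\nub)\sin^2(\pi s\,\hat\xib\cdot\sigb)\,d\sigb$, then get a uniform lower bound on $[\eta,\infty)\times S^{d-1}$ by combining (a) continuity and strict positivity for each fixed $s>0$ (a level-set measure-zero argument) with (b) uniform-in-direction convergence of $K(s,\cdot)$ to a positive constant as $s\to\infty$, proved via $L^1$-approximation by smooth densities plus the classical uniform decay of Fourier transforms of smooth surface measures on $S^{d-1}$. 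This avoids the combinatorial case analysis entirely, at the price of invoking a deeper Fourier-analytic fact; the paper's argument is more elementary and produces explicit constants. For $d=1$ the paper compares the cosine-oscillatory integral with the monotone tail using a sign lemma of Du--Yin type; you instead use a layer-cake representation of the nonincreasing $w$ and Fubini to reduce to the one-variable inequality $\int_0^s\sin^2(\pi u)\,du\ge C(\eta)(s-\eta)_+$, which is checked by elementary limits. Both $d=1$ arguments exploit the monotonicity in the same essential way. A small bonus of your approach is that the resulting bound holds for all $\xib\ne\bm 0$, not only $|\xib|>N$.
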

\subsection{Nonlocal Function Spaces via Fourier symbols}\label{sec:nlcl_fct_sp}
In this section, we introduce a function space via the Fourier transform formula of the nonlocal gradient operator defined in the previous section. We will show that this function space is a Hilbert space and that it is the same space as the space of function with square integrable distributional nonlocal gradient that was defined and thoroughly studied in \cite{han2023nonlocal}.

Define the function space $\cH^{\nub}_w(\R^d)$ as
\begin{equation}
    \cH^{\nub}_w(\R^d):=\left\{u\in L^2(\R^d):|u|_{\cH^{\nub}_w(\R^d)}^{2}:=\int_{\R^d}|\bm\lambda_w^{\nub}(\xib)\hat{u}(\xib)|^2d\xib<\infty\right\}
\end{equation}
equipped with norm \[\|u\|_{\cH^{\nub}_w(\R^d)}:=\left( \|u\|_{L^2(\R^d)}^2+|u|_{\cH^{\nub}_w(\R^d)}^2  \right)^{\frac{1}{2}}\]
It is direct to check that $\cH^{\nub}_w(\R^d)$ is a normed vector space. Moreover, with the inner product given by $( u, v)_{\cH^{\nub}_w(\R^d)}  = (u, v)_{L^{2}(\mathbb{R}^d)} + (\lab^{\nub}_w \hat{u}, \lab^{\nub}_w \hat{v})_{L^{2}(\mathbb{R}^d; \mathbb{C}^{d})}$ for any $u, v \in \cH^{\nub}_w(\R^d)$,  $\cH^{\nub}_w(\R^d)$ is, in fact, a Hilbert space. Before we prove this statement we make the following remark. 

Let $\sS(\R^d;\C)$ be the space of Schwartz functions and $\sS'(\R^d;\C^d)$ be the space of tempered distribution. Since $u\in L^2(\R^d)$, $\hat{u}\in L^2(\R^d;\C)$. Moreover, we may define the product $\bm\lambda^{\nub}_w\hat{u}$ as a tempered distribution in the following way: 
    \[\langle \lab^{\nub}_w \hat{u}, \phib\rangle := \left(\lab^{\nub}_w\hat{u},\phib\right)_{L^2(\R^d;\C^d)}= \left(\hat{u},\left(\lab^{\nub}_w \right)^T \phib\right)_{L^2(\R^d;\C)},\quad \forall \phib\in\sS(\R^d;\C^d).\] 
    and use  \cref{Fsym0thorderest} to show that $\bm\lambda^{\nub}_w\hat{u}\in \sS'(\R^d;\C^d)$. We will use this observation to prove the completeness of $\cH^{\nub}_w(\R^d)$. 
\begin{proposition}\label{prop:hilbert}
    $\cH^{\nub}_w(\R^d)$ is a Hilbert space. 
\end{proposition}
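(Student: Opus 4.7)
The plan is to verify that the given sesquilinear pairing is indeed a valid inner product and then establish completeness by a standard ``extract two $L^2$ limits and identify them'' argument, using the tempered distribution observation made just before the proposition.

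First, I would quickly check that $(u,v)_{\cH^{\nub}_w(\R^d)}$ is a well-defined inner product on $\cH^{\nub}_w(\R^d)$. Linearity in the first slot and conjugate symmetry follow from the corresponding properties of the two $L^2$ pairings. Positivity of $(u,u)_{\cH^{\nub}_w(\R^d)}$ is immediate, and $(u,u)_{\cH^{\nub}_w(\R^d)} = 0$ forces $\|u\|_{L^2(\R^d)} = 0$, hence $u \equiv 0$. The induced norm agrees with the one defining $\cH^{\nub}_w(\R^d)$.

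The core step is completeness. Let $\{u_n\}$ be a Cauchy sequence in $\cH^{\nub}_w(\R^d)$. Since the $L^2(\R^d)$ norm is dominated by the $\cH^{\nub}_w(\R^d)$ norm, $\{u_n\}$ is Cauchy in $L^2(\R^d)$ and so converges in $L^2(\R^d)$ to some $u$; by the Plancherel isometry, $\hat u_n \to \hat u$ in $L^2(\R^d;\C)$. Similarly, $\{\lab^{\nub}_w \hat u_n\}$ is Cauchy in $L^2(\R^d;\C^d)$, hence converges to some $\vb \in L^2(\R^d;\C^d)$. It remains to identify $\vb$ with $\lab^{\nub}_w \hat u$ (understood as a tempered distribution, as explained in the remark preceding the proposition), after which $u \in \cH^{\nub}_w(\R^d)$ and $u_n \to u$ in the $\cH^{\nub}_w(\R^d)$-norm will follow.

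To perform this identification, pick an arbitrary $\phib \in \sS(\R^d;\C^d)$ and test both sides. By \cref{Fsym0thorderest}, $\lab^{\nub}_w(\xib)$ grows at most linearly, so $(\lab^{\nub}_w)^T \phib \in L^2(\R^d;\C)$. Hence
\[
\langle \lab^{\nub}_w \hat u_n, \phib \rangle = \bigl(\hat u_n, (\lab^{\nub}_w)^T \phib \bigr)_{L^2(\R^d;\C)} \longrightarrow \bigl(\hat u, (\lab^{\nub}_w)^T \phib \bigr)_{L^2(\R^d;\C)} = \langle \lab^{\nub}_w \hat u, \phib \rangle,
\]
using $\hat u_n \to \hat u$ in $L^2(\R^d;\C)$. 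At the same time, $L^2$-convergence of $\lab^{\nub}_w \hat u_n$ to $\vb$ gives $\langle \lab^{\nub}_w \hat u_n, \phib\rangle \to \langle \vb, \phib\rangle$. Since $\phib$ was arbitrary, $\lab^{\nub}_w \hat u = \vb$ in $\sS'(\R^d;\C^d)$, and since $\vb \in L^2$, this equality holds in $L^2(\R^d;\C^d)$. Therefore $|u|_{\cH^{\nub}_w(\R^d)}^2 = \|\vb\|_{L^2}^2 < \infty$, so $u \in \cH^{\nub}_w(\R^d)$, and $\|u_n - u\|_{\cH^{\nub}_w(\R^d)}^2 = \|u_n - u\|_{L^2}^2 + \|\lab^{\nub}_w \hat u_n - \vb\|_{L^2}^2 \to 0$.

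The main (minor) subtlety is the identification step: one must be careful that $\lab^{\nub}_w \hat u$ is only a priori a tempered distribution, not an $L^2$ function, so the argument must be phrased in terms of testing against Schwartz functions, where the linear bound \cref{Fsym0thorderest} is exactly what makes the pairing legitimate. Everything else is a routine two-step completion argument.
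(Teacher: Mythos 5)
Your proof is correct and follows essentially the same route as the paper: extract the two $L^2$ limits $u$ and $\vb$, then identify $\lab^{\nub}_w\hat u = \vb$ by pairing against Schwartz test functions, using the linear growth bound \eqref{Fsym0thorderest} to ensure $(\lab^{\nub}_w)^T\phib\in L^2$. The only cosmetic difference is that you work directly with $\hat u_n\to\hat u$ in $L^2$, whereas the paper pulls the pairing back to physical space via $\cF^{-1}$; the underlying mechanism (uniqueness of limits in $\sS'$) is identical.
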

\begin{proof}
Let $\{u_n\}_{n=1}^\infty$ be a Cauchy sequence in $\cH^{\nub}_w(\R^d)$. Then there exist $u\in L^2(\R^d)$ and $\vb\in L^2(\R^d;\mathbb{C}^d)$ such that $u_n\to u$ and $\bm\lambda^{\nub}_w\hat{u}_n\to \vb$ in $L^2(\R^d;\C^d)$. 
Then for any $\phib\in\sS(\R^d;\C^d)$, \[\begin{split}
        \langle \bm\lambda^{\nub}_w\hat{u},\phib\rangle&=\left(\hat{u},\left( \bm\lambda^{\nub}_w \right)^T\phib\right)_{L^2(\R^d;\C)}=\left( u,\cF^{-1}\left( \left( \bm\lambda^{\nub}_w \right)^T\phib \right)\right)_{L^2(\R^d;\C)}\\
        &=\lim_{n\to\infty
        }\left( u_n,\cF^{-1}\left( \left( \bm\lambda^{\nub}_w \right)^T\phib \right)\right)_{L^2(\R^d;\C)}=\lim_{n\to\infty}\langle \bm\lambda^{\nub}_w\hat{u}_n,\phib\rangle.
    \end{split}\] 
    
    Note that the above equalities hold true since $\left( \bm\lambda^{\nub}_w \right)^T\phib \in L^2(\R^d;\C)$ as a result of \cref{Fsym0thorderest} and $\phib\in \sS(\R^d;\C^d)$.
     Thus,  $\bm\lambda^{\nub}_w\hat{u}_n\to\bm\lambda^{\nub}_w\hat{u}$ in $\sS'(\R^d;\C^d)$. Uniqueness of limits in the sense of tempered distributions, we have $\bm\lambda^{\nub}_w\hat{u}=\vb\in L^2(\R^d)$ That is, $u\in \cH^{\nub}_w(\R^d)$  with $u_n\to u$ in $\cH^{\nub}_w(\R^d)$. Thus $\cH^{\nub}_w(\R^d)$ is complete.
\end{proof}

In \cite{han2023nonlocal}, the nonlocal function space  $\cS^{\nub}_w(\R^d)$ is defined as follows: $$\cS^{\nub}_w(\R^d):=\{u\in L^2(\R^d):\mathfrak{G}^{\nub}_w u\in L^2(\R^d;\R^d)\}$$ with norm \[\|u\|_{\cS^{\nub}_w(\R^d)}:=\left( \|u\|_{L^2(\R^d)}^2+|u|_{\cS^{\nub}_w(\R^d)}^2 \right)^{\frac{1}{2}},\]
where $|u|_{\cS^{\nub}_w(\R^d)}:=\|\mathfrak{G}^{\nub}_w u\|_{L^2(\R^d;\R^d)}$.
For a given open domain $\Omega\subset \mathbb{R}^{d}$, we define the closed subspace 
\[
\mathcal{S}_{w}^{\nub}(\Omega) = \{u\in \mathcal{S}_{w}^{\nub}(\mathbb{R}^{d}): u=0\quad \text{a.e. in }\mathbb{R}^{d}\setminus\Omega\}
\]
that collects functions in $\mathcal{S}_{w}^{\nub}(\mathbb{R}^{d})$ that vanish outside of $\Omega$.  
It is shown in \cite{han2023nonlocal}  that $\mathcal{S}_{w}^{\nub}(\Omega)$ is  a Hilbert space and that if $\Omega$ has a continuous boundary  $C^\infty_c(\Omega)$ is dense in $\cS^{\nub}_w(\Omega)$. We now establish the equivalence of the space $\cS^{\nub}_w(\R^d)$ with $\cH^{\nub}_w(\R^d)$.
\begin{proposition}\label{prop:SequalH}
    $\cS^{\nub}_w(\R^d)=\cH^{\nub}_w(\R^d)$ with equal norms. More precisely, the identity map is an isometrical isomorphism between $\cS^{\nub}_w(\R^d)$ and $\cH^{\nub}_w(\R^d)$.
\end{proposition}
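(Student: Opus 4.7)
The plan is to show that $\cS^{\nub}_w(\R^d)$ and $\cH^{\nub}_w(\R^d)$ coincide via the Fourier identification $\mathcal{F}(\mathfrak{G}^{\nub}_w u) = \bm\lambda^{\nub}_w \hat u$. The key linking identity, valid for every $u \in L^{2}(\R^d)$ and $\phib \in C^{\infty}_{c}(\R^d; \R^{d})$, is
\[
\langle \mathfrak{G}^{\nub}_w u, \phib\rangle \;=\; \int_{\R^d} \hat u(\xib)\,\bm\lambda^{\nub}_w(\xib)\cdot \overline{\hat\phib(\xib)}\, d\xib.
\]
To derive this, I would begin from the distributional definition \cref{eq:distributional_gradient} and substitute \cref{eq:fourierhbdiv} for $\mathcal{F}(\mathcal{D}^{-\nub}_w \phib) = \bm\lambda^{-\nub}_w(\xib)\cdot \hat\phib(\xib)$ (noting that \cref{eq:fourierhbdiv} applies since $\phib \in C^{\infty}_{c}$). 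A short change of variables $\zb \to -\zb$ in \cref{deffouriersymboldiv} gives the symmetry $\bm\lambda^{-\nub}_w(\xib) = -\overline{\bm\lambda^{\nub}_w(\xib)}$, which I would record as a preliminary step. Parseval's identity together with the fact that $\phib$ is real then converts $-\int u \,\mathcal{D}^{-\nub}_w \phib\, d\xb$ into the displayed integral above.

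With this identity in hand, both inclusions follow. For $\cH^{\nub}_w(\R^d) \subseteq \cS^{\nub}_w(\R^d)$: if $u \in \cH^{\nub}_w(\R^d)$ then $\hat u \,\bm\lambda^{\nub}_w \in L^{2}(\R^d; \C^{d})$, and since $\hat u$ is Hermitian (because $u$ is real) and $\bm\lambda^{\nub}_w(-\xib) = \overline{\bm\lambda^{\nub}_w(\xib)}$ by direct inspection of \cref{deffouriersymboldiv}, the product is also Hermitian, so $\mathcal{F}^{-1}(\hat u\,\bm\lambda^{\nub}_w)$ is a real $L^{2}$ vector field. Another application of Parseval to the displayed identity identifies the distribution $\mathfrak{G}^{\nub}_w u$ with $\mathcal{F}^{-1}(\hat u\,\bm\lambda^{\nub}_w)$, and Plancherel yields $|u|_{\cS^{\nub}_w(\R^d)} = |u|_{\cH^{\nub}_w(\R^d)}$. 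For the reverse inclusion $\cS^{\nub}_w(\R^d) \subseteq \cH^{\nub}_w(\R^d)$: if $\mathfrak{G}^{\nub}_w u \in L^{2}(\R^d;\R^d)$, the displayed identity becomes $\int \mathfrak{G}^{\nub}_w u \cdot \phib \,d\xb = \int \hat u\,\bm\lambda^{\nub}_w \cdot \overline{\hat\phib}\,d\xib$, and the left side equals $\int \mathcal{F}(\mathfrak{G}^{\nub}_w u)\cdot \overline{\hat\phib}\,d\xib$ by Parseval; since $\hat\phib$ ranges over a dense set as $\phib$ varies in $C^{\infty}_{c}(\R^d;\R^d)$, this forces the distributional equality $\mathcal{F}(\mathfrak{G}^{\nub}_w u) = \hat u \,\bm\lambda^{\nub}_w$, hence $\hat u\,\bm\lambda^{\nub}_w \in L^{2}(\R^d;\C^{d})$ with $|u|_{\cH^{\nub}_w(\R^d)} = |u|_{\cS^{\nub}_w(\R^d)}$.

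The main technical obstacle I anticipate is purely bookkeeping rather than conceptual: I must justify Parseval's identity for the dualities involved, which requires checking that each object lies in a space where the identity applies. In particular, because $\bm\lambda^{\nub}_w$ grows at most linearly by \cref{Fsym0thorderest}, $(\bm\lambda^{\nub}_w)^{T}\phib \in L^{2}(\R^d;\C)$ for any Schwartz $\phib$, which is exactly what was used in \Cref{prop:hilbert} to make sense of $\bm\lambda^{\nub}_w \hat u$ as a tempered distribution, so the pairings $\int \hat u\,\bm\lambda^{\nub}_w \cdot \overline{\hat\phib}\,d\xib$ are unambiguous. A second small care point is the real/complex distinction when $\phib$ is real but Fourier transforms are complex; verifying the symmetries $\bm\lambda^{-\nub}_w = -\overline{\bm\lambda^{\nub}_w}$ and $\bm\lambda^{\nub}_w(-\xib) = \overline{\bm\lambda^{\nub}_w(\xib)}$ up front removes this ambiguity and lets the norm identities drop out cleanly from Plancherel.
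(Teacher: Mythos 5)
Your proof is correct and proceeds by essentially the same mechanism as the paper's: compute the distributional pairing $\langle \mathfrak{G}^{\nub}_w u, \phib\rangle$, convert it by Parseval using $\cF(\mathcal{D}^{-\nub}_w\phib) = -\overline{\bm\lambda^{\nub}_w}^{T}\hat\phib$, and identify $\mathfrak{G}^{\nub}_w u$ with $\cF^{-1}(\bm\lambda^{\nub}_w\hat u)$. The one genuine organizational difference is in the direction $\cS^{\nub}_w(\R^d)\subseteq\cH^{\nub}_w(\R^d)$: the paper approximates $u$ by a sequence in $C^\infty_c(\R^d)$ using density of $C^\infty_c$ in $\cS^{\nub}_w(\R^d)$ (a nontrivial fact imported from \cite{han2023nonlocal}), passes the pointwise symbol formula \eqref{eq:fourierhbgrad} through the limit in $L^2$, and invokes the $\sS'$-convergence argument from \Cref{prop:hilbert}. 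You instead extract both inclusions from the single pairing identity $\langle\mathfrak{G}^{\nub}_w u,\phib\rangle = \langle\bm\lambda^{\nub}_w\hat u,\hat\phib\rangle$, which requires no such density theorem and is in that respect more self-contained. The trade-off is negligible since the paper already cites the density result for other purposes, but your route is a genuine simplification of that half.

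One small point worth tightening: the phrase "since $\hat\phib$ ranges over a dense set as $\phib$ varies in $C^\infty_c(\R^d;\R^d)$" elides the fact that $\{\hat\phib : \phib\in C^\infty_c(\R^d;\R^d)\}$ is only a real-linear subspace of Hermitian functions in $\sS(\R^d;\C^d)$, not a complex-dense set. Your argument survives because both $\cF(\mathfrak{G}^{\nub}_w u)$ and $\bm\lambda^{\nub}_w\hat u$ are Hermitian distributions (the latter by the symmetry $\bm\lambda^{\nub}_w(-\xib)=\overline{\bm\lambda^{\nub}_w(\xib)}$ that you correctly record), so their difference pulls back under $\cF^{-1}$ to a real-valued tempered distribution that vanishes against all of $C^\infty_c(\R^d;\R^d)$; density of $C^\infty_c(\R^d;\R^d)$ in $\sS(\R^d;\R^d)$ then finishes. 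The paper's own distributional step has the same implicit content, so this is not a gap you should be penalized for, but it is the one place where the bookkeeping you anticipated actually bites.
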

\begin{proof}
    First, we show $\cS^{\nub}_w(\R^d)\subseteq \cH^{\nub}_w(\R^d)$. It suffices to show that for $u\in \cS^{\nub}_w(\R^d)$, $\|\bm\lambda^{\nub}_w\hat{u}\|_{L^2(\R^d;\C^d)}=\|\mathfrak{G}^{\nub}_w u\|_{L^2(\R^d;\R^d)}<\infty$. Since $C^\infty_c(\R^d)$ is dense in $\cS^{\nub}_w(\R^d)$, there exists $\{u_n\}_{n=1}^\infty\subset C^\infty_c(\R^d)$ such that $u_n\to u$ in $L^2(\R^d)$ and $\mathcal{G}^{\nub}_w u_n\to \mathfrak{G}^{\nub}_w u$ in $L^2(\R^d)$. Since Fourier transform is an isomorphism on $L^2(\R^d)$, $\bm\lambda^{\nub}_w\hat{u}_n=\cF\left( \mathcal{G}^{\nub}_w u_n \right)$ converges to $\widehat{\mathfrak{G}^{\nub}_w u}$ in $L^2(\R^d;\C^d)$. Similar to the proof of \Cref{prop:hilbert}, one has $\bm\lambda^{\nub}_w\hat{u}_n\to \bm\lambda^{\nub}_w\hat{u}$ in $\sS'(\R^d;\C^d)$. Then we conclude that $\bm\lambda^{\nub}_w\hat{u}=\widehat{\mathfrak{G}^{\nub}_w u}\in L^2(\R^d;\C^d)$ and  $\|\bm\lambda^{\nub}_w\hat{u}\|_{L^2(\R^d;\C^d)}=\|\mathfrak{G}^{\nub}_w u\|_{L^2(\R^d;\R^d)}<\infty$.

    Second, we show that $\cH^{\nub}_w(\R^d)\subseteq \cS^{\nub}_w(\R^d)$. It suffices to show that for any $u\in\cH^{\nub}_w(\R^d)$, $\mathfrak{G}^{\nub}_w u\in L^2(\R^d;\R^d)$. By definition, $\mathfrak{G}^{\nub}_w u\in (C^\infty_c(\R^d;\R^d))'$ and for any $\phib\in C^\infty_c(\R^d;\R^d)$, \[\langle\mathfrak{G}^{\nub}_w u,\phib\rangle=-\left( u,\mathcal{D}^{-\nub}_w\phib \right)_{L^2(\R^d)}=\left( \hat{u},\overline{\bm\lambda^{\nub}_w}^T\hat{\phib} \right)_{L^2(\R^d;\C)},\] where we used $\cF\left( \mathcal{D}^{-\nub}_w\phib \right)=-\overline{\bm\lambda^{\nub}_w}^T\hat{\phib}$ by \cref{deffouriersymboldiv,eq:fourierhbdiv}. On the other hand, since $u\in\cH^{\nub}_w(\R^d)$, $\bm\lambda^{\nub}_w\hat{u}\in L^2(\R^d;\C^d)$ and thus $\cF^{-1}\left( \bm\lambda^{\nub}_w\hat{u} \right)\in L^2(\R^d;\C^d)$. For any $\phib\in C^\infty_c(\R^d;\R^d)$, \[\left( \cF^{-1}\left( \bm\lambda^{\nub}_w\hat{u} \right),\phib \right)_{L^2(\R^d;\C^d)}=\left( \bm\lambda^{\nub}_w\hat{u},\hat{\phib} \right)_{L^2(\R^d;\C^d)}=\left( \hat{u},\overline{\bm\lambda^{\nub}_w}^T\hat{\phib} \right)_{L^2(\R^d;\C)}.\]
    Therefore, $\mathfrak{G}^{\nub}_w u=\cF^{-1}\left( \bm\lambda^{\nub}_w\hat{u} \right)$ in the sense of distribution. 
    We note that $\cF^{-1}\left( \bm\lambda^{\nub}_w\hat{u} \right)$ is real-valued. This follows from the argument that for any $\phib\in C^\infty_c(\R^d;\R^d)$, \[\left( \cF^{-1}\left( \bm\lambda^{\nub}_w\hat{u} \right),\phib \right)_{L^2(\R^d;\C^d)}=\left( \hat{u},\overline{\bm\lambda^{\nub}_w}^T\hat{\phib} \right)_{L^2(\R^d;\C)}=-\left( u,\mathcal{D}^{-\nub}_w\phib \right)_{L^2(\R^d)}\in\R,\]and so  $\cF^{-1}\left( \bm\lambda^{\nub}_w\hat{u} \right)\in L^2(\R^d;\R^d)$, thus $\mathfrak{G}^{\nub}_w u\in L^2(\R^d;\R^d)$ and the proof is complete.
\end{proof}

We remark that, on the one hand, in the event that the kernel $w$ satisfying \eqref{eq:kernelassumption} also belongs to $L^{1}(\mathbb{R}^{d})$, we have that $\|\lab^{\nub}_w\|_{L^{\infty}(\mathbb{R}^{d};\R^d)} \leq 2 \|w\|_{L^{1}(\mathbb{R}^{d})}$. In this case, the function space $\mathcal{S}^{\nub}_w(\mathbb{R}^{d})$ coincides with $L^{2}(\mathbb{R}^d)$ with norm estimate 
\[
\|u\|_{\cS^{\nub}_{w}(\R^d)}^2=\int_{\R^d}(1+|\lab^{\nub}_w(\xib)|^2)|\hat{u}(\xib)|^2d\xib \leq \left(1 + 4\|w\|^2_{L^{1}(\mathbb{R}^{d})}\right) \|u\|^2_{L^{2}(\mathbb{R}^d)},\quad\forall u\in \cS^{\nub}_{w}(\R^d).
\]
On the other hand, when the kernel $w$ satisfying \eqref{eq:kernelassumption} is not integrable, i.e. \(\int_{\R^d} w(\zb)d\zb=+\infty\), or equivalently $\int_{|\zb|<1}w(\zb)d\zb=+\infty$, the space $\mathcal{S}^{\nub}_w(\mathbb{R}^{d})$ is properly contained in $L^{2}(\mathbb{R}^d)$. In fact, as one of our main results of the next section,  we will show that $\mathcal{S}^{\nub}_w(\mathbb{R}^{d})$  is locally compactly contained in $L^{2}(\mathbb{R}^d)$. An example of such type of kernel is $w_{s}(\zb) = |\zb|^{-d-s}$ for $s\in (0, 1)$. In this case, $C_1(s) |\xib|^{s}\le |\lab^{\nub}_{w_{s}}(\xib)|\le C_2(s) |\xib|^{s}$, where $C_1(s)(1-s)\to 1$ and $C_2(s)(1-s)\to 2\pi$ as $s\nearrow 1$, leading to the conclusion that $\cS^{\nub}_{w_s}(\R^d) = \cH^{\nub}_{w_{s}}(\R^d)  = H^{s}(\R^d)$, the fractional Sobolev space. 

As we will see in the sections below, working on function spaces with compactly supported kernels leads to simplifications. To that end, given a nonintegrable kernel $w$, we will show that we can have an equivalent characterization of the function space $\mathcal{S}^{\nub}_w(\mathbb{R}^{d})$ using only the cut-off $w^c(\zb):=w(\zb)\chi_{B_1(\bm 0)}(\zb)$ of the kernel $w$.

\begin{lemma}\label{lem:equiv_norm_trunc_w}
    Let $w(\zb)$ be a kernel satisfying \eqref{eq:kernelassumption}.
    Denote $w^c(\zb):=w(\zb)\chi_{B_1(\bm 0)}(\zb)$ as the compactly supported kernel obtained cutting of the tail of $w$. Then there exist positive constants $c_1$ and $c_2$ depending on $M_w^2=\int_{|\zb|>1}w(\zb)d\zb$ such that 
    \begin{equation}
        c_1\|u\|_{\cS^{\nub}_w(\R^d)}\le \|u\|_{\cS^{\nub}_{w^c}(\R^d)}\le c_2\|u\|_{\cS^{\nub}_{w}(\R^d)},\quad\forall u\in \cS^{\nub}_{w}(\R^d).
    \end{equation}
\end{lemma}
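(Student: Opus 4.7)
My plan is to exploit the Fourier symbol characterization of the space $\cS^{\nub}_w(\R^d) = \cH^{\nub}_w(\R^d)$ from \Cref{prop:SequalH}, so that the squared norm becomes $\|u\|_{\cS^{\nub}_w(\R^d)}^2 = \|u\|_{L^2(\R^d)}^2 + \int_{\R^d} |\lab^{\nub}_w(\xib) \hat{u}(\xib)|^2 d\xib$, and the inequality reduces to a pointwise comparison of symbols $\lab^{\nub}_w$ and $\lab^{\nub}_{w^c}$.

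\textbf{Step 1: Decomposition of the kernel.} Write $w = w^c + w^t$ where $w^t(\zb) := w(\zb)\chi_{\{|\zb|>1\}}(\zb)$. Since the defining integral \cref{deffouriersymboldiv} is linear in the kernel, we obtain $\lab^{\nub}_w(\xib) = \lab^{\nub}_{w^c}(\xib) + \lab^{\nub}_{w^t}(\xib)$ pointwise in $\xib$. Both kernels $w^c$ and $w^t$ clearly satisfy \cref{eq:kernelassumption}, so the associated operators and symbols are well defined.

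\textbf{Step 2: Uniform bound on the tail symbol.} Because $w^t$ is integrable with $\|w^t\|_{L^1(\R^d)} = M_w^2 < \infty$, the trivial estimate $|e^{2\pi i \xib\cdot\zb} - 1| \le 2$ and $|\zb/|\zb|| = 1$ give
\[
|\lab^{\nub}_{w^t}(\xib)| \le \int_{\cH_{\nub} \cap \{|\zb|>1\}} 2\, w(\zb)\, d\zb \le 2 M_w^2, \qquad \forall \xib\in\R^d.
\]

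\textbf{Step 3: Pointwise comparison of the two symbols.} From Step 1 and the triangle inequality applied with a factor of $2$ inside the square,
\[
|\lab^{\nub}_{w^c}(\xib)|^2 \le 2|\lab^{\nub}_w(\xib)|^2 + 2|\lab^{\nub}_{w^t}(\xib)|^2 \le 2|\lab^{\nub}_w(\xib)|^2 + 8(M_w^2)^2,
\]
and symmetrically
\[
|\lab^{\nub}_w(\xib)|^2 \le 2|\lab^{\nub}_{w^c}(\xib)|^2 + 8(M_w^2)^2.
\]

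\textbf{Step 4: Integrating against $|\hat u|^2$ and concluding.} Multiplying the first inequality by $|\hat u(\xib)|^2$, integrating over $\R^d$, and using Plancherel ($\|\hat u\|_{L^2} = \|u\|_{L^2}$), we get
\[
|u|_{\cS^{\nub}_{w^c}(\R^d)}^2 \le 2|u|_{\cS^{\nub}_w(\R^d)}^2 + 8(M_w^2)^2 \|u\|_{L^2(\R^d)}^2,
\]
which yields $\|u\|_{\cS^{\nub}_{w^c}(\R^d)}^2 \le c_2^2 \|u\|_{\cS^{\nub}_w(\R^d)}^2$ with $c_2^2 := \max(2, 1 + 8(M_w^2)^2)$. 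In particular, $u\in \cS^{\nub}_w(\R^d)$ automatically implies $u\in \cS^{\nub}_{w^c}(\R^d)$. The reverse bound is obtained identically from the symmetric estimate, producing $c_1$.

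There is no substantial obstacle: once \Cref{prop:SequalH} is invoked, the proof is just a Fourier-side triangle inequality and the boundedness of the operator with an $L^1$ kernel. The only thing to be careful about is to keep the constants purely in terms of $M_w^2$, as the statement requires.
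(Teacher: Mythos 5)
Your proof is correct and follows essentially the same route as the paper's: both invoke \Cref{prop:SequalH} to reduce to a pointwise symbol comparison, bound the tail contribution $\lab^{\nub}_{w-w^c}$ by $2M_w^2$, and close with the elementary $(a+b)^2\le 2a^2+2b^2$, arriving at the same constant $\max\{2,1+8(M_w^2)^2\}$. One small inaccuracy worth fixing: $w^t = w\chi_{\{|\zb|>1\}}$ does \emph{not} satisfy \eqref{eq:kernelassumption}, since $M_{w^t}^1 = 0 \notin (0,\infty)$; this does not hurt you, because your Step 2 estimates $|\lab^{\nub}_{w^t}(\xib)|$ directly from integrability of $w^t$ rather than appealing to any property of $\cS^{\nub}_{w^t}$, but the sentence claiming both kernels satisfy \eqref{eq:kernelassumption} should be dropped or corrected.
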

\begin{proof}
We begin noting that $w^{c}$ satisfies \eqref{eq:kernelassumption}, and so from \Cref{prop:SequalH} we have that 
    \[
    \|u\|_{\cS^{\nub}_{w}(\R^d)}^2=\int_{\R^d}(1+|\lab^{\nub}_w(\xib)|^2)|\hat{u}(\xib)|^2d\xib,\quad\forall u\in \cS^{\nub}_{w}(\R^d).
    \]
    A similar equality also holds for functions in $\cS^{\nub}_{w^c}(\R^d)$ with $w$ replaced by $w^c$.
  Now to compare $\|u\|_{\cS^{\nub}_w(\R^d)}$ and $\|u\|_{\cS^{\nub}_{w^c}(\R^d)}$, it suffices to compare $\lab^{\nub}_w(\xib)$ and $\lab^{\nub}_{w^c}(\xib)$. By \eqref{Fsym0thorderest}, 
    \begin{equation}
        |\lab^{\nub}_w(\xib)-\lab^{\nub}_{w^c}(\xib)|=|\bm\lambda_{w-w^c}^{\nub}(\xib)|\le 2 M_{w}^2, \quad\forall \xib\in\R^d.
    \end{equation} 
    Then \[
    |\lab^{\nub}_{w^c}(\xib)|^2\le (|\lab^{\nub}_w(\xib)|+2M_{w}^2)^2\le 2(|\lab^{\nub}_w(\xib)|^2+4(M_w^2)^2)
    \]
    and \[
    \|u\|_{\cS^{\nub}_{w^c}(\R^d)}^2\le \max\{2,1+8(M_w^2)^2\}\|u\|_{\cS^{\nub}_w(\R^d)}^2,\quad\forall u\in \cS^{\nub}_{w}(\R^d).
    \]
    Therefore, $c_2=(\max\{2,1+4(M_w^2)^2\})^{\frac{1}{2}}$. Changing the role of $w$ and $w^c$, we get the other inequality with $c_1=c_2^{-1}$.
\end{proof}

\section{Compactness Results}\label{sec:cpt}
In this section, we prove two compactness results. The first one states that for a fixed nonintegrable kernel $w$ satisfying \eqref{eq:kernelassumption}, the function space $\mathcal{S}_{w}^{\nub}(\mathbb{R}^{d})$ is  compactly contained in $L^{2}(\mathbb{R}^{d})$ with respect to the $L^{2}_{\mathrm{loc}}$-topology. This, in particular, implies that for such $w$, and any bounded domain with continuous boundary $\mathcal{S}_{w}^{\nub}(\Omega)$ is compactly contained in $L^{2}(\Omega)$. Other variants of this compactness results will also be proved. The second result is related to the sequence of parameterized function spaces $\{\mathcal{S}_{w_n}^{\nub}(\mathbb{R}^{d})\}_n$ associated with a sequence of kernel $\{w_n\}_n$ where $\{\min(1, |\zb|) w_n(\zb)\}_n$ is concentrating around $0$ and behaving like a Dirac delta-sequence in the appropriate sense. We will give the precise statement later, but  the result essentially states that a sequence  $\{u_n\}$ of $L^{2}$ functions with the property that    $\sup_n\|u_n\|_{\mathcal{S}_{w_n}^{\nub}(\mathbb{R}^{d})} < \infty$, has a compact closure in $L^{2}(\Omega)$, for any bounded domain $\Omega$ of $\mathbb{R}^{d}$.  
These compactness results are proved with the aid of a variant of Riesz-Kolmogorov-Fr\'echet compactness criterion proved in \cite{bourgain2001another}, see also \cite[Lemma 5.4]{mengesha2012nonlocal}.  

To state the criterion, we observe that for $\Pb\in L^1(\R^d;\R^d)$ and $f\in L^p(\R^d)$, $p\ge 1$, we understand the convolution $\Pb*f:\R^d\to\R^d$ as $(\Pb*f)_j(\xb):=(\Pb_j*f)(\xb)$. Then $\Pb*f\in L^p(\R^d;\R^d)$.

\begin{lemma}\label{lem:RKFcpt}
    Suppose that $\Pb\in L^1(\R^d;\R^d)$ and $\pb:=\int_{\R^d}\Pb(\xb)d\xb\in\R^d\backslash\{\bm 0\}$. Define $\Pb_\tau(\xb):=\tau^{-d}\Pb\left(\xb/\tau\right)$ for $\tau>0$. For any $1\leq p< \infty$, if $\{f_n\}_{n=1}^\infty$ is a bounded sequence of functions in $L^p(\R^d)$ and \begin{equation}
        \lim_{\tau\to 0^+}\limsup_{n\to\infty}\|\Pb_\tau*f_n-\pb f_n\|_{L^p(\R^d;\R^d)}=0,
    \end{equation}
    then $\{f_n\}$ has a compact closure in $L^p(\Om)$ for any bounded domain $\Om\subset \R^d$.
\end{lemma}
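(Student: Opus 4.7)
The plan is to invoke the classical Riesz-Kolmogorov-Fréchet criterion on a scalar-valued convolution approximation of $f_n$ built out of a fixed $L^1$ kernel. Since $\pb\ne \bm 0$, the identity $f_n = \tfrac{1}{|\pb|^2}\pb\cdot(\pb f_n)$ suggests introducing the scalar approximation $h_n^\tau:=\tfrac{1}{|\pb|^2}\pb\cdot(\Pb_\tau\ast f_n)\in L^p(\R^d)$. A direct estimate gives $\|f_n-h_n^\tau\|_{L^p(\R^d)}\le |\pb|^{-1}\|\Pb_\tau\ast f_n-\pb f_n\|_{L^p(\R^d;\R^d)}$, so the hypothesis makes this error uniformly small in $n$ once $\tau$ is chosen small. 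What remains is to show precompactness of $\{h_n^\tau\}_n$ in $L^p(\Omega)$ for each fixed $\tau>0$; a standard total boundedness argument will then transfer precompactness to $\{f_n\}$.

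For $\tau>0$ fixed, the crucial point is that $\Pb_\tau$ is a \emph{fixed} $L^1$ function (independent of $n$). Young's convolution inequality bounds $\|h_n^\tau\|_{L^p(\R^d)}\le |\pb|^{-1}\|\Pb\|_{L^1(\R^d;\R^d)}\sup_n\|f_n\|_{L^p(\R^d)}$, which is finite. Continuity of translation in $L^1$ yields $\|h_n^\tau(\cdot+y)-h_n^\tau\|_{L^p(\R^d)}\le |\pb|^{-1}\|\Pb_\tau(\cdot+y)-\Pb_\tau\|_{L^1(\R^d;\R^d)}\|f_n\|_{L^p(\R^d)}$, which tends to zero uniformly in $n$ as $|y|\to 0$. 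Since $\Omega$ is bounded, no tightness on $\Omega^c$ is needed, and the classical Kolmogorov-Riesz theorem delivers precompactness of the restrictions $\{h_n^\tau|_\Omega\}_n$ in $L^p(\Omega)$.

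To conclude, given $\epsilon>0$ I would first choose $\tau>0$ so that $\limsup_n\|\Pb_\tau\ast f_n-\pb f_n\|_{L^p(\R^d;\R^d)}<\epsilon|\pb|/2$, then pick $N$ with $\|f_n-h_n^\tau\|_{L^p(\R^d)}<\epsilon/2$ for $n\ge N$, cover the precompact family $\{h_n^\tau|_\Omega\}_{n\ge N}$ by finitely many $\epsilon/2$-balls in $L^p(\Omega)$, and note that these centers, together with $\{f_n|_\Omega\}_{n<N}$, form an $\epsilon$-net for $\{f_n|_\Omega\}_n$. Hence $\{f_n\}$ is totally bounded, i.e., precompact, in $L^p(\Omega)$. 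I expect no substantial obstacle; the only point requiring care is the equicontinuity step, which works uniformly in $n$ precisely because the translation-continuity is applied to the $n$-independent kernel $\Pb_\tau$, while the hypothesis is invoked solely to control the approximation error $f_n-h_n^\tau$.
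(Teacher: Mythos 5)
The paper states this lemma without proof, citing \cite{bourgain2001another} and \cite[Lemma 5.4]{mengesha2012nonlocal}; your argument correctly reproduces the standard proof in those references: approximate $f_n$ by a fixed-kernel convolution, use the classical Fr\'echet--Kolmogorov criterion for the approximants at fixed $\tau$, and close with an $\varepsilon$-net argument controlled by the hypothesis. Your adaptation to the vector-valued setting---recovering $f_n$ from $\Pb_\tau*f_n$ via $f_n=|\pb|^{-2}\,\pb\cdot(\pb f_n)$ together with Cauchy--Schwarz, which is where $\pb\neq\bm 0$ enters---is exactly the right device, and the translation-continuity and total-boundedness steps are sound.
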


\subsection{Compact embedding of function spaces associated with nonintegrable kernel}\label{sec:cpt_non_int}

In this subsection we show a local compact embedding result of $\cS^{\nub}_w(\R^d)$ into $L^{2}(\R^d)$ when $w$ is a nonintegrable kernel. To be precise, we state the assumption on $w$ as follows which will remain in force through out this subsection. 
\begin{assumption}\label{assu:nonint_kernel}
    Assume that \(w\) satisfies \eqref{eq:kernelassumption} and the following conditions:
    \begin{enumerate}
        \item \(\int_{\R^d} w(\zb)d\zb=+\infty\), or equivalently $\int_{|\zb|<1}w(\zb)d\zb=+\infty$.
        \item If \(d=1\), then we assume in addition that \(w\) is nonincreasing.
    \end{enumerate}
\end{assumption}
We use \(\overline{w}\) as the radial representation of \(w\), i.e, \(\overline{w}:[0,\infty)\to[0,\infty)\) such that \(\overline{w}(|\xb|)=w(\xb)\) for all \(\xb\in\R^d\).
Note that the above assumptions hold for \(w(\zb)=|\zb|^{-d-s}\chi_{B_1(\bm 0)}(\zb)\) with \(s\in (0,1)\), as well as for $w(\zb) = -\ln(|\zb|)|\zb|^{-d}\chi_{B_1(\bm 0)}(\zb)$. 
The main result we prove in this subsection is the following. 
\begin{theorem}\label{thm:loc_cpt_embedding}
    Let $w$ satisfy \Cref{assu:nonint_kernel}. Let \(\{u_n\}_{n=1}^\infty\subset \cS^{\nub}_w(\R^d)\) be a bounded sequence in $L^{2}(\mathbb{R}^{d})$ with bounded nonlocal gradient seminorm 
    \begin{equation}\label{bdd-seminorm}
    \sup_n \|\mathfrak{G}^{\nub}_w u_n\|_{L^2(\R^d;\R^d)}=B<\infty,\end{equation}
    then for any bounded domain \(\Om\subset\R^d\), \(\{u_n|_\Om\}_{n=1}^\infty\) is precompact in \(L^2(\Om)\). In other words, \(\cS^{\nub}_w(\R^d)\) is locally compactly embedded in \(L^2(\R^d)\). 
\end{theorem}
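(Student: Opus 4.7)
The plan is to apply the Riesz--Kolmogorov--Fr\'echet criterion of \Cref{lem:RKFcpt} to $\{u_n\}$, which is bounded in $L^2(\R^d)$ by hypothesis. I choose any $\Pb\in C^\infty_c(\R^d;\R^d)$ with $\pb:=\int_{\R^d}\Pb(\xb)\,d\xb\neq \bm 0$ (for concreteness, $\Pb=\rho\,\eb_1$ for a nonnegative radial $C^\infty_c$ bump $\rho$). Then $\Pb\in L^1(\R^d;\R^d)$ and $\hat\Pb$ is smooth and bounded, with $\hat\Pb(\bm 0)=\pb$. It thus suffices to verify
\begin{equation*}
\lim_{\tau\to 0^+}\limsup_{n\to\infty}\|\Pb_\tau*u_n-\pb u_n\|_{L^2(\R^d;\R^d)}=0.
\end{equation*}

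By Plancherel and the convolution theorem, $\widehat{\Pb_\tau*u_n}(\xib)=\hat\Pb(\tau\xib)\hat u_n(\xib)$, so
\begin{equation*}
\|\Pb_\tau*u_n-\pb u_n\|_{L^2(\R^d;\R^d)}^2=\int_{\R^d}|\hat\Pb(\tau\xib)-\pb|^2\,|\hat u_n(\xib)|^2\,d\xib.
\end{equation*}
I split the integral at a threshold $|\xib|=R$, to be chosen large after $\tau\to 0^+$. On $\{|\xib|\le R\}$ the integrand is controlled by $\omega(\tau R)^2|\hat u_n|^2$, where $\omega(s):=\sup_{|\etab|\le s}|\hat\Pb(\etab)-\pb|\to 0$ as $s\to 0^+$ by continuity of $\hat\Pb$ at $\bm 0$. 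Since $\|\hat u_n\|_{L^2}=\|u_n\|_{L^2}$ is uniformly bounded, this low-frequency contribution vanishes as $\tau\to 0^+$ for every fixed $R$.

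For the high-frequency piece, I use the Fourier characterization in \Cref{prop:SequalH}, which gives $\|\lab^{\nub}_w\hat u_n\|_{L^2(\R^d;\C^d)}=\|\mathfrak{G}^{\nub}_w u_n\|_{L^2(\R^d;\R^d)}\le B$. Factoring out $|\lab^{\nub}_w|^{-2}$ on the tail yields
\begin{equation*}
\int_{|\xib|>R}|\hat u_n(\xib)|^2\,d\xib\le \Bigl(\inf_{|\xib|>R}|\lab^{\nub}_w(\xib)|\Bigr)^{-2} B^2,
\end{equation*}
and so the high-frequency contribution is bounded by $4\|\hat\Pb\|_{L^\infty}^2 B^2 \bigl(\inf_{|\xib|>R}|\lab^{\nub}_w(\xib)|\bigr)^{-2}$, uniformly in $n$ and $\tau$. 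The remaining and main task is to show $|\lab^{\nub}_w(\xib)|\to\infty$ as $|\xib|\to\infty$, which is precisely where \Cref{assu:nonint_kernel} enters. Applying \Cref{lem:lablowerbdd_xilarge} with $N=\ep=1$ gives, for $|\xib|>1$,
\begin{equation*}
|\lab^{\nub}_w(\xib)|\ge C\int_{|\zb|>1/|\xib|}w(\zb)\,d\zb\ge C\int_{1/|\xib|<|\zb|<1}w(\zb)\,d\zb,
\end{equation*}
and the right-hand side tends to $+\infty$ as $|\xib|\to\infty$ because $\int_{|\zb|<1}w(\zb)\,d\zb=+\infty$.

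Combining the two bounds, given $\vep>0$ I first take $R$ large enough that the high-frequency contribution is at most $\vep/2$ for every $n$ and every $\tau$, and then $\tau$ small enough that the low-frequency contribution is at most $\vep/2$ for every $n$. This verifies the hypothesis of \Cref{lem:RKFcpt} and delivers precompactness of $\{u_n|_\Om\}$ in $L^2(\Om)$ for every bounded $\Om\subset\R^d$. The main obstacle of the whole argument is the Fourier-symbol blow-up $|\lab^{\nub}_w(\xib)|\to\infty$, for which the nontrivial lower estimate of \Cref{lem:lablowerbdd_xilarge} is indispensable; once that is in hand, everything else reduces to the standard low/high frequency splitting.
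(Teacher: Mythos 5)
Your argument is correct and is genuinely different in spirit from the paper's proof, even though both rest on \Cref{lem:RKFcpt} and \Cref{lem:lablowerbdd_xilarge}. The paper uses the specific vector field $\Pb(\xb)=\chi_{\nub}(\xb)\chi_{B_1(\bm 0)}(\xb)\xb/|\xb|$ (whose Fourier symbol $\etab_\tau$ satisfies the sharp bound \eqref{etaest1}), passes to the cut-off kernel $w^c$ via \Cref{lem:equiv_norm_trunc_w}, and then establishes the multiplicative comparison $|\etab_\tau(\xib)|\le Cg(\tau)|\lab^{\nub}_{w^c}(\xib)|$ of \Cref{lem:compare_eta_lab_fix} via a delicate three-region analysis whose boundaries scale with $\tau$; this yields a uniform rate $\|\Pb_\tau*u_n-\pb u_n\|_{L^2}\le B'Cg(\tau)$. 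You instead pick a smooth bump $\Pb$, so that smallness of $\hat\Pb(\tau\xib)-\pb$ on any fixed ball is automatic, and do a standard low/high frequency split at a $\tau$-independent threshold $R$: the high-frequency tail is killed uniformly in $\tau$ by $\inf_{|\xib|>R}|\lab^{\nub}_w(\xib)|\to\infty$, which you correctly extract from \Cref{lem:lablowerbdd_xilarge} together with $\int_{|\zb|<1}w=+\infty$. The one tiny infelicity — you phrase the goal as $|\lab^{\nub}_w(\xib)|\to\infty$ whereas what you actually need and actually prove is $\inf_{|\xib|>R}|\lab^{\nub}_w|\to\infty$ as $R\to\infty$ — is harmless because the lower bound $C\int_{1/|\xib|<|\zb|<1}w$ is monotone in $|\xib|$. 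Your route is shorter and more elementary for this fixed-kernel statement since it bypasses \Cref{lem:compare_eta_lab_fix} entirely and never needs the truncation $w^c$ or \Cref{lem:equiv_norm_trunc_w}. The paper's more elaborate set-up does, however, pay off immediately afterwards: in \Cref{thm:cpt_seq_nonint_kernel}, where the kernel $w_n$ varies with $n$ and $|\lab^{\nub}_{w_n}|$ need not blow up for fixed $n$ (e.g.\ if $w_n\in L^1$), your fixed-threshold decoupling fails and one really needs a $\tau$-scaled comparison of the type in \Cref{lem:compare_eta_lab_fix} that tracks the joint limit $(\tau,n)\to(0,\infty)$.
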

As we mentioned earlier, we prove the theorem using the criterion given in  \Cref{lem:RKFcpt}. To that end, we set
  $\Pb(\xb):=\chi_{\nub}(\xb)\chi_{B_1(\bm 0)}(\xb)\xb/|\xb|$. 
By taking the Fourier transform, one can verify that for $u\in L^2(\R^d)$ and $\xib\in\R^d$, \[\cF\left(\Pb_\tau*u-\pb u\right)(\xib)=\bm\eta_\tau(\xib)\hat{u}(\xib),\]where 
\begin{equation}\label{defeta}
    \bm\eta_\tau(\xib):=\int_{\R^d}\frac{1}{\tau^d}\chi_{\nub}(\zb)\chi_{B_\tau(\bm 0)}(\zb)\frac{\zb}{|\zb|}(e^{-2\pi i\zb\cdot\xib}-1)d\zb,\quad\xib\in\R^d.
\end{equation}
By a change of variable one implies that 
\[\bm\eta_\tau(\xib)=\int_{\R^d}\chi_{\nub}(\zb)\chi_{B_1(\bm 0)}(\zb)\frac{\zb}{|\zb|}(e^{-2\pi i\tau\zb\cdot\xib}-1)d\zb,\quad\xib\in\R^d.\]
It then follows from a calculation similar to \cite[Theorem 2.4]{lee2020nonlocal} that 
\begin{equation}\label{etaest1}
    |\bm\eta_\tau(\xib)|\le V_d\min\left\{\sqrt{2}\pi \tau|\xib|, 1\right\},
\end{equation}
where $V_d$ is the volume of the \(d\)-dimensional unit ball \(B^d_1(\bm 0)\).

After noticing that the boundedness condition \eqref{bdd-seminorm} of Theorem \ref{thm:loc_cpt_embedding} can be expressed using the Fourier symbols as $\lab^{\nub}_{w^c} (\xib)$, our next goal is to estimate $|\etab_\tau(\xib)|$ in terms of $|\lab^{\nub}_{w^c} (\xib)|$. To that end, we have the following crucial estimating device.  
\begin{remark}\label{cut-off-lowerbound}
We reiterate that for any $w$ satisfying \eqref{eq:kernelassumption}, its cut-off  $w^c=w\chi_{B_1(\bm 0)}$ also satisfies \eqref{eq:kernelassumption} and as a consequence of \Cref{lem:labbdd_original}(2), there exist constants $N_1=N_1(w,d)>0$ and $C_1=C_1(w,d)>0$ such that \[|\bm\lambda^{\nub}_{w^c}(\xib)|\ge C_1|\xib|,\quad\forall |\xib|\le N_1.\]
We recall from \eqref{eq:epsilon_0} that there exists $\ep_0>0$ such that 
\[
0<\int_{\ep_0<|\zb|<1}w(\zb)d\zb<\infty,
\]
and thus, taking \(N=N_1\) and $\ep=\ep_0$ in \Cref{lem:lablowerbdd_xilarge}, it follows that there exists a constant \(C_2=C_2(N_1\ep_0,d)>0\) such that 
\[|\bm\lambda^{\nub}_{w^c}(\xib)|\ge C_2\int_{\frac{N_1\ep_0}{|\xib|}<|\zb|<1} w(\zb)d\zb>0,\quad\forall |\xib|>N_1.\]
\end{remark}

\begin{lemma}\label{lem:compare_eta_lab_fix}
Suppose that $\epsilon_0>0$ as in \eqref{eq:epsilon_0}.    There exists \(C=C(w,d)\) such that for \(0<\tau<\ep_0\), \[|\etab_\tau(\xib)|\le Cg(\tau) |\lab^{\nub}_{w^c} (\xib)|,\quad\forall\xib\in\R^d,\]
    where \(g:(0,\ep_0)\to\R_+\) satisfies
    \[\lim_{\tau\to 0^+ }g(\tau)=0.\]
\end{lemma}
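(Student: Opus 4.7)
The plan is to bound the ratio $|\etab_\tau(\xib)|/|\lab^{\nub}_{w^c}(\xib)|$ uniformly in $\xib \in \R^d$ by a quantity tending to $0$ as $\tau \to 0^+$, and then to take $g(\tau)$ as that uniform bound. I would do this through a splitting of the frequency space into three regions, playing the two competing upper bounds on $|\etab_\tau|$ --- the linear bound $V_d\sqrt{2}\pi\tau|\xib|$ and the uniform bound $V_d$ recorded in \eqref{etaest1} --- against the two lower bounds on $|\lab^{\nub}_{w^c}|$ collected in \Cref{cut-off-lowerbound}: a linear bound $C_1|\xib|$ valid for $|\xib|\le N_1$, and an integral bound $C_2 \int_{N_1\ep_0/|\xib|<|\zb|<1} w\,d\zb$ valid for $|\xib|>N_1$.

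Fixing a threshold $R_\tau > N_1$ to be chosen, I would partition $\R^d$ into the three pieces $\{|\xib|\le N_1\}$, $\{N_1<|\xib|\le R_\tau\}$, and $\{|\xib|>R_\tau\}$. On the low-frequency piece, combining the two linear bounds yields $|\etab_\tau(\xib)|/|\lab^{\nub}_{w^c}(\xib)| \le V_d\sqrt{2}\pi\tau/C_1$, which is of order $\tau$. On the intermediate piece I would still use the linear bound $|\etab_\tau(\xib)| \le V_d\sqrt{2}\pi\tau R_\tau$ in the numerator, and observe that $|\xib|>N_1$ forces $N_1\ep_0/|\xib|<\ep_0$, so the integration range in the lower bound contains $\{\ep_0<|\zb|<1\}$; by nonnegativity of $w$ and \eqref{eq:epsilon_0}, this produces a positive constant lower bound on $|\lab^{\nub}_{w^c}(\xib)|$ independent of $\xib$, and the resulting ratio is of order $\tau R_\tau$.

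On the high-frequency piece, I would instead use the uniform bound $|\etab_\tau(\xib)|\le V_d$. For the denominator, since $|\xib|>R_\tau$ implies $N_1\ep_0/|\xib|<N_1\ep_0/R_\tau$, the set $\{N_1\ep_0/R_\tau<|\zb|<1\}$ is contained in $\{N_1\ep_0/|\xib|<|\zb|<1\}$, so $|\lab^{\nub}_{w^c}(\xib)| \ge C_2\int_{N_1\ep_0/R_\tau<|\zb|<1} w\,d\zb$, bounding the ratio by a quantity depending only on $R_\tau$. This is precisely where the nonintegrability assumption $\int_{|\zb|<1} w\,d\zb = +\infty$ from \Cref{assu:nonint_kernel} is indispensable: as $R_\tau\to\infty$ the lower integration limit tends to $0$, the integral diverges, and the bound vanishes.

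It then suffices to choose $R_\tau$ so that simultaneously $\tau R_\tau \to 0$ and $R_\tau \to \infty$ as $\tau\to 0^+$; the choice $R_\tau = 1/\sqrt{\tau}$ works, and for $\tau<\ep_0<1$ one has $R_\tau>1>N_1$ as required. Setting $g(\tau)$ to be the maximum of the three region-wise bounds, and $C$ the corresponding absolute constant depending only on $w$ and $d$, completes the argument. The substantive step, which has already been handled by \Cref{lem:lablowerbdd_xilarge} together with the nonintegrability hypothesis, is obtaining the diverging lower bound for $|\lab^{\nub}_{w^c}|$ in the high-frequency regime; once that is in hand, the splitting and the choice of $R_\tau$ are routine.
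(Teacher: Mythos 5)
Your proof is correct, and the overall framework matches the paper's: a three-region decomposition of $\R^d$, the two upper bounds for $|\etab_\tau|$ from \eqref{etaest1}, and the two lower bounds for $|\lab^{\nub}_{w^c}|$ from \Cref{cut-off-lowerbound}, with the nonintegrability of $w$ invoked to make the high-frequency integral diverge. The treatment of the intermediate annulus is genuinely different, however, and simpler. The paper places the intermediate/high cutoff at $N_1\ep_0/\tau$, so on the intermediate region $N_1\le|\xib|\le N_1\ep_0/\tau$ the numerator $\tau|\xib|$ can be as large as the fixed constant $N_1\ep_0$; the paper therefore cannot use a constant lower bound for the denominator and must instead track the ratio pointwise through the supremum $f(\tau)$, proving $f(\tau)\to 0$ by a two-level contradiction/subsequence argument. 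You avoid that entirely by choosing a cutoff $R_\tau=1/\sqrt{\tau}$ satisfying $\tau R_\tau\to 0$ and $R_\tau\to\infty$: on $N_1<|\xib|\le R_\tau$ the numerator $\tau|\xib|\le\sqrt{\tau}$ is already small and the denominator is bounded below by the constant $C_2\int_{\ep_0<|\zb|<1}w\,d\zb>0$, while on $|\xib|>R_\tau$ the integral $\int_{N_1\ep_0\sqrt{\tau}<|\zb|<1}w\,d\zb$ diverges by monotone convergence. This trades the paper's delicate interplay argument for an elementary one; the resulting $g(\tau)$ is an explicit maximum of $\sqrt{\tau}$ and $\bigl(\int_{N_1\ep_0\sqrt{\tau}<|\zb|<1}w\,d\zb\bigr)^{-1}$, whereas the paper's $g(\tau)$ involves the less explicit $f(\tau)$.
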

\begin{proof}
    Fix \(0<\tau<\ep_0\), we prove the inequality by discussing \(0\le |\xib|\le N_1\), \(N_1\le |\xib|\le N_1\ep_0/\tau\) and \(|\xib|>N_1\ep_0/\tau\), where $\epsilon_0$ and $N_1$ are as in \Cref{cut-off-lowerbound}. 
        
    We begin from the estimate given in \eqref{etaest1} where for some \(c_\eta=c_\eta(d)>0\) we have  \[|\etab_\tau(\xib)|\le c_\eta(d) \min\{\tau |\xib|, 1\},\quad \text{for all}\ \xib\in\R^d.
    \]
    Then on the one hand for \(|\xib|\le  N_1\), one has \[|\etab_\tau(\xib)|\le c_\eta \tau |\xib|\le \frac{c_\eta}{C_1}\tau |\lab^{\nub}_{w^c} (\xib)|.\]
    On the other hand, for \(|\xib|>N_1\ep_0/\tau>N_1\) by \Cref{cut-off-lowerbound} one has 
    \[\begin{split}
        |\etab_\tau(\xib)|&\le c_\eta\le \frac{c_\eta}{C_2}\left(\int_{\frac{N_1\ep_0}{|\xib|}<|\zb|<1}w(\zb)d\zb\right)^{-1}|\lab^{\nub}_{w^c} (\xib)|\\
        &\le \frac{c_\eta}{C_2}\left(\int_{\tau<|\zb|<1}w(\zb)d\zb\right)^{-1}|\lab^{\nub}_{w^c} (\xib)|,
    \end{split}\]
    where we used nonnegativity of \(w\) in the last step. 
    Finally, for \(N_1\le |\xib|\le N_1\ep_0/\tau\), define 
    \[f(\tau):=\sup_{|\xib|\in\left[N_1,\frac{N_1\ep_0}{\tau}\right]} \frac{\tau|\xib|}{\int_{\frac{N_1\ep_0}{|\xib|}<|\zb|<1} w(\zb)d\zb}=\sup_{s\in\left[N_1\tau,N_1\ep_0\right]}\frac{s}{\int_{\frac{N_1\ep_0 \tau}{s}<|\zb|<1} w(\zb)d\zb},\]
    then again by \Cref{cut-off-lowerbound} we know for \(N_1\le |\xib|\le N_1\ep_0/\tau\),
    \[|\etab_\tau(\xib)|\le c_\eta \tau |\xib|\le \frac{c_\eta}{C_2}f(\tau)|\lab^{\nub}_{w^c}(\xib)|.\]
    It remains to show \(f(\tau)\to 0\) as \(\tau\to 0^+\). Suppose that there exist a constant \(\alpha>0\) and a sequence \(\{\tau_n\}_{n=1}^\infty\) such that \(\tau_n\to 0\) as \(n\to \infty\) and \(f(\tau_n)>\alpha\) for any \(n\ge 1\). Then for each \(n\ge 1\) there exists \(s_n\in [N_1\tau_n,N_1\ep_0]\) such that 
    \[\frac{s_n}{\int_{\frac{N_1\ep_0\tau_n}{s_n}<|\zb|<1} w(\zb)d\zb}>\alpha.\]
    We claim that \(\tau_n/s_n\to 0\) as \(n\to\infty\). Suppose this does not hold. Then there exist \(0< B\le 1/N_1\) and subsequences \(\{\tau_{n_k}\}_{k=1}^\infty\) and \(\{s_{n_k}\}_{k=1}^{\infty}\) such that \(B\le \tau_{n_k}/s_{n_k}\le 1/N_1\). Then since $N_1 \tau_{n_k} / s_{n_k} \leq 1$, we have 
     \[\alpha< \frac{s_{n_k}}{\int_{\frac{N_1\ep_0\tau_{n_k}}{s_{n_k}}<|\zb|<1} w(\zb)d\zb}\le \frac{\tau_{n_k}}{B\int_{\ep_0<|\zb|<1} w(\zb)d\zb}\to 0,\quad k\to +\infty.\]
    This is a contradiction. Therefore, the claim holds. Pick a decreasing subsequence of \(\{\tau_n/s_n\}_{n=1}^\infty\) and still denote it by \(\{\tau_n/s_n\}_{n=1}^\infty\). Then since $\int_{|\zb|<1}w(\zb) d\zb =\infty$, we have that as $n\to \infty,$
    \[\alpha< \frac{s_n}{\int_{\frac{N_1\ep_0\tau_n}{s_n}<|\zb|<1} w(\zb)d\zb}\le \frac{N_1\ep_0}{\int_{\frac{N_1\ep_0\tau_n}{s_n}<|\zb|<1} w(\zb)d\zb}\to\frac{N_1\ep_0}{\int_{|\zb|<1} w(\zb)d\zb}=0,\]
    where we used monotone convergence theorem.  This is a contradiction. Therefore, \(f(\tau)\to 0\) as \(\tau\to 0^+\) and 
    \[g(\tau):=\max\left\{\tau,f(\tau),\left(\int_{\tau<|\zb|<1}w(\zb)d\zb\right)^{-1}\right\}\to 0,\quad \text{as } \tau\to 0^+.\]
    Hence the proof is complete with \(C=C(w,d)=c_\eta/\min\{C_1, C_2\}\).
\end{proof}
We are now ready to give the proof of the main result. 

\begin{proof}[The proof of  \Cref{thm:loc_cpt_embedding}]
    Note that by \Cref{prop:SequalH}, \[\sup_n \|\bm\lambda_{w}^{\nub}\hat{u}_n\|_{L^2(\R^d;\C^d)}=\sup_n \|\mathfrak{G}^{\nub}_w u_n\|_{L^2(\R^d;\R^d)}=B<\infty.\]
    By \Cref{lem:equiv_norm_trunc_w}, we may use the cut-off kernel $w^{c}$ and there exists a constant $B'$ depending on $B$, $M_w^2$ and $\sup_n\|u_n\|_{L^2(\R^d)}$ such that
    \[
    \sup_n \|\bm\lambda_{w^c}^{\nub}\hat{u}_n\|_{L^2(\R^d;\C^d)}\le B'<\infty.
    \]
    We use the compactness criterion \Cref{lem:RKFcpt} to show that $\{u_n\}_{n=1}^\infty$ has a compact closure in $L^2(\Om)$ for bounded domain $\Om\subset \R^d$.
    Choose $P(\xb):=\chi_{\nub}(\xb)\chi_{B_1(\bm 0)}(\xb)\xb/|\xb|$. To that end, it suffices to show that \begin{equation}\label{limlimsup_fix}
        \lim_{\tau\to 0^+}\limsup_{n\to\infty}\|\Pb_\tau*u_n-\pb u_n\|_{L^2(\R^d;\R^d)}= 0
    \end{equation} 
    But recall that 
    \[
    \|\Pb_\tau*u_n-\pb u_n\|_{L^2(\R^d;\R^d)} = \|\mathcal{F}(\Pb_\tau*u_n-\pb u_n)\|_{L^2(\R^d;\R^d)} = \|\etab_\tau\hat{u}_n\|_{L^2(\R^d;\R^d)}
    \]
    where \(\etab_\tau\) is given by \eqref{defeta}.
    By \Cref{lem:compare_eta_lab_fix}, there exists $C=C(w,d)$ such that for \(0<\tau<\ep_0\), \[|\etab_\tau(\xib)|\le Cg(\tau) |\lab^{\nub}_{w^c} (\xib)|,\quad\forall\xib\in\R^d,\]
    where \(g:(0,\ep_0)\to\R_+\) satisfies
    $\lim_{\tau\to 0^+ }g(\tau)=0$. 
    Then for any \(n\ge 1\),
    \[\|\etab_\tau\hat{u}_n\|_{L^2(\R^d;\C^d)}\le  Cg(\tau)\|\bm\lambda_{w^c}^{\nub}\hat{u}_n\|_{L^2(\R^d;\C^d)}\le B'Cg(\tau)\to 0,\quad\tau\to 0^+.\]
    Hence, \eqref{limlimsup_fix} holds and that conpletes the  proof of the theorem.
\end{proof}

An important variant of the above compactness result holds true when applied to a sequence of function spaces $\cS^{\nub}_{w_n}(\R^d)$ corresponding to the radial kernels $w_n(\zb)$ satisfying $0\leq w_{n}(\zb) \nearrow w(\zb)$ a.e. $\zb$. 
We will present two classes of examples that satisfy this convergence property. First, take $w_n(\zb):=\min\{n, w(\zb)\}$ for $n\in\N_+$. For each $n$, $w_n \in L^{1}(\mathbb{R}^d)\cap L^{\infty}(\mathbb{R}^d)$  and  $w_{n}(\zb) \nearrow w(\zb)$ for all $\zb\in \mathbb{R}^{d}$. Note that $\cS^{\nub}_{w_n}(\R^d)$ coincides with $L^{2}(\R^d)$ for each $n$, while $\cS^{\nub}_{w}(\R^d)$ is locally compactly contained in $L^{2}(\R^d)$. For another example, we take $s\in (0, 1)$ and consider $w_s(\zb) = |\zb|^{-d-s}\chi_{B_{1}(\bm 0)}(\zb)$. For a sequence of numbers $0\leq s_n\nearrow s$ we know $w_{s_n}(\zb) := |\zb|^{-d-s_n}\chi_{B_{1}(\bm 0)}(\zb) \nearrow w_{s}(\zb)$ for all $\zb\in \mathbb{R}^{d}$, and that $\cS^{\nub}_{w_{s_n}}(\R^{d}) = H^{s_{n}}(\R^{d})$, and $\cS^{\nub}_{w_{s}}(\R^{d}) = H^{s}(\R^{d}),$ which is compactly contained in $L^{2}(\R^d)$. 

\begin{theorem}\label{thm:cpt_seq_nonint_kernel}
   Let $w$ satisfy \Cref{assu:nonint_kernel}. Suppose that $\{w_n\}_{n=1}^\infty$ is a sequence of radial kernels satisfying 
   $
   0\leq w_{n}(\zb) \nearrow w(\zb)$  for almost every $\zb\in \R^d$ and \(\{u_n\}_{n=1}^\infty\subset L^2(\R^d)\) is  a bounded sequence with 
   \[\sup_n \|\mathfrak{G}^{\nub}_{w_n} u_n\|_{L^2(\R^d;\R^d)}=B<\infty,\]
    then for any bounded domain \(\Om\subset\R^d\), \(\{u_n|_\Om\}_{n=1}^\infty\) is precompact in \(L^2(\Om)\). Moreover, if in addition $u_n\in\cS^{\nub}_{w_n}(\Om)$ for every $n$, that is, $u_n=0$ in $\Om^c$, then for any limit point \(u\), the zero-extension \(\tilde{u}\) of \(u\) outside \(\Om\) satisfies $\tilde{u}\in \cS^{\nub}_w(\Om)$ with \[\|\mathfrak{G}^{\nub}_{w} \tilde{u}\|_{L^2(\R^d;\R^d)}\le B.\]
\end{theorem}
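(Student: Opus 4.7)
The plan is to mirror the proof of \Cref{thm:loc_cpt_embedding} in the sequence setting, upgrading each ingredient to hold uniformly in $n$: obtain $L^2_{\mathrm{loc}}$-precompactness via the Riesz--Kolmogorov--Fr\'echet criterion (\Cref{lem:RKFcpt}), then identify the limiting gradient by duality. Applying \Cref{lem:equiv_norm_trunc_w} to each $w_n$ together with $M_{w_n}^2\leq M_w^2$, I obtain a constant $B'$, depending only on $B$, $\sup_n\|u_n\|_{L^2}$, and $M_w^2$ but not on $n$, with $\sup_n\|\bm\lambda^{\nub}_{w_n^c}\hat u_n\|_{L^2}\leq B'$, where $w_n^c:=w_n\chi_{B_1(\bm 0)}$. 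Choosing $P(\xb)=\chi_{\nub}(\xb)\chi_{B_1(\bm 0)}(\xb)\xb/|\xb|$, Plancherel reduces the RKF criterion to showing
\[
\lim_{\tau\to 0^+}\limsup_{n\to\infty}\int_{\R^d}|\etab_\tau(\xib)|^2|\hat u_n(\xib)|^2\,d\xib=0,
\]
with $\etab_\tau$ as in \eqref{defeta}.

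To achieve this I will establish two lower bounds on $|\bm\lambda^{\nub}_{w_n^c}(\xib)|$ uniform for $n\geq n_0$ large enough. For small $\xib$, a Taylor expansion gives $\bm\lambda^{\nub}_{w_n^c}(\xib)=2\pi i A_n\xib+R_n(\xib)$ with $A_n:=\int_{\cH_{\nub}\cap B_1}\frac{\zb\otimes\zb}{|\zb|}w_n\,d\zb$ and remainder $|R_n(\xib)|\leq 2\pi^2 M_w^1|\xib|^2$ (using $|\zb|^2\leq |\zb|$ on $B_1$ and $w_n\leq w$). Since $A_n\to A$ entrywise and $A$ is positive definite under \Cref{assu:nonint_kernel}, for $n\geq n_0$ one has $\lambda_{\min}(A_n)\geq \tfrac12\lambda_{\min}(A)$, which yields $|\bm\lambda^{\nub}_{w_n^c}(\xib)|\geq C_1|\xib|$ for $|\xib|\leq N_1$ with $C_1, N_1$ independent of large $n$. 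For $|\xib|>N_1$, \Cref{lem:lablowerbdd_xilarge} applied to $w_n^c$ furnishes $|\bm\lambda^{\nub}_{w_n^c}(\xib)|\geq C_2\int_{N_1\epsilon_0/|\xib|<|\zb|<1}w_n\,d\zb$ with the universal constant $C_2=C(N_1\epsilon_0,d)$.

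Splitting the Fourier integral into the three regions $\{|\xib|\leq N_1\}$, $\{N_1<|\xib|\leq N_1\epsilon_0/\tau\}$, and $\{|\xib|>N_1\epsilon_0/\tau\}$, combining the elementary bound $|\etab_\tau(\xib)|\leq V_d\min\{\sqrt{2}\pi\tau|\xib|,1\}$ from \eqref{etaest1} with the two lower bounds above gives a pointwise estimate $|\etab_\tau(\xib)|\leq G_n(\tau,\xib)|\bm\lambda^{\nub}_{w_n^c}(\xib)|$ in each region. The principal obstacle is the middle region: there $G_n(\tau,\xib)=c\,\tau|\xib|/\int_{N_1\epsilon_0/|\xib|<|\zb|<1}w_n\,d\zb$ does \emph{not} vanish as $\tau\to 0^+$ for any fixed $n$, because each $w_n$ is integrable. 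The remedy is to interchange the supremum and the $n$-limit. Reparametrizing $r=N_1\epsilon_0/|\xib|\in[\tau,\epsilon_0]$ and writing $F_n(r):=\int_{r<|\zb|<1}w_n\,d\zb$, $F(r):=\int_{r<|\zb|<1}w\,d\zb$, the monotone convergence $F_n\nearrow F$ of continuous functions is uniform on the compact set $[\tau,\epsilon_0]$ by Dini's theorem, so
\[
\limsup_{n\to\infty}\sup_{r\in[\tau,\epsilon_0]}\frac{\tau/r}{F_n(r)}=\sup_{r\in[\tau,\epsilon_0]}\frac{\tau/r}{F(r)}\xrightarrow{\tau\to 0^+}0,
\]
the last step being exactly the argument driving $f(\tau)\to 0$ in \Cref{lem:compare_eta_lab_fix}. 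An analogous $\limsup_n$-to-$\lim_n$ interchange combined with $F_n(\tau)\nearrow F(\tau)\to\infty$ handles the outer region, and the inner region is immediate from the factor $\tau^2\to 0$. Summing the three contributions and invoking \Cref{lem:RKFcpt} closes the precompactness argument.

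For the second assertion, pass to a subsequence along which $u_{n_k}\to u$ in $L^2(\Omega)$ and $\mathfrak{G}^{\nub}_{w_{n_k}}u_{n_k}\rightharpoonup \vb$ weakly in $L^2(\R^d;\R^d)$ with $\|\vb\|_{L^2}\leq B$. Since each $u_{n_k}$ vanishes outside $\Omega$, the zero-extension $\tilde u$ of $u$ satisfies $u_{n_k}\to\tilde u$ strongly in $L^2(\R^d)$. For any test field $\phib\in C_c^\infty(\R^d;\R^d)$, the nonlocal integration-by-parts identity gives $\int\mathfrak{G}^{\nub}_{w_{n_k}}u_{n_k}\cdot\phib=-\int u_{n_k}\cdot\mathcal{D}^{-\nub}_{w_{n_k}}\phib$, while dominated convergence (with a dominant $\mathcal{D}^{-\nub}_w\phib\in L^2(\R^d)$ furnished by \eqref{estdivonwholespace}) yields $\mathcal{D}^{-\nub}_{w_n}\phib\to\mathcal{D}^{-\nub}_w\phib$ in $L^2(\R^d)$. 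Passing to the limit gives $\int\vb\cdot\phib=-\int\tilde u\cdot\mathcal{D}^{-\nub}_w\phib=\langle\mathfrak{G}^{\nub}_w\tilde u,\phib\rangle$ for every such $\phib$, so $\mathfrak{G}^{\nub}_w\tilde u=\vb\in L^2(\R^d;\R^d)$, whence $\tilde u\in\cS^{\nub}_w(\Omega)$ with $\|\mathfrak{G}^{\nub}_w\tilde u\|_{L^2}\leq B$.
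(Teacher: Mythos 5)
Your proof is correct, and it takes a genuinely different route from the paper's in three places. For the low-frequency lower bound on $|\bm\lambda^{\nub}_{w_n^c}|$, you redo a Taylor expansion with uniformity in $n$ (noting $A_n\to A$ entrywise with $A=\tfrac{M_w^1}{2d}\I_d$ positive definite, and bounding the remainder by $2\pi^2 M_w^1|\xib|^2$), whereas the paper keeps \Cref{lem:labbdd_original}(2) as a black box for $w^c$ and then transfers it to $w_n^c$ via the Lipschitz-type bound \eqref{Fsym0thorderest} applied to $w^c - w_n^c$; both are valid and of comparable length. For the middle region, your use of Dini's theorem — observing that $F_n(r):=\int_{r<|\zb|<1}w_n$ are continuous, increase to the continuous, strictly positive $F$ on the compact interval $[\tau,\ep_0]$, hence converge uniformly, so $\limsup_n\sup_r$ and $\sup_r\lim_n$ interchange — is a cleaner and more conceptual replacement for the paper's contradiction argument (extracting subsequences $\tau_j,n_j,s_j$ and invoking monotone convergence); it localizes all the $\tau$-dependence into the single quantity $\sup_r\tau/(rF(r))$, which is exactly what was shown to vanish in \Cref{lem:compare_eta_lab_fix}. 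Finally, for identifying the limit point, the paper works on the Fourier side (a.e.\ convergence of $\hat u_n$, pointwise convergence $\lab^{\nub}_{w_n}\to\lab^{\nub}_w$, then Fatou), while you extract a weak $L^2$ limit $\vb$ of $\mathfrak{G}^{\nub}_{w_{n_k}}u_{n_k}$, pass to the limit through the duality pairing with test fields using $\mathcal{D}^{-\nub}_{w_n}\phib\to\mathcal{D}^{-\nub}_w\phib$ in $L^2$, and identify $\vb=\mathfrak{G}^{\nub}_w\tilde u$; this physical-side argument is essentially the one the paper uses later in verifying (Cii) of \Cref{thm:convergence_non_int}, so the ingredients are all available. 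One small point worth tightening: the dominating function for $\mathcal{D}^{-\nub}_{w_n}\phib$ should be $g(\xb):=\int|\phib(\xb+\zb)-\phib(\xb)|\,w(\zb)\,d\zb$ (which lies in $L^2$ by the same calculation as \eqref{estdivonwholespace}), not $\mathcal{D}^{-\nub}_w\phib$ itself; the latter is the pointwise limit, not a dominant.
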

\begin{proof}
    We begin by noting that for sufficiently large $n$, $w_n$ satisfies \eqref{eq:kernelassumption}. Indeed, by monotone convergence theorem, $M^1_{w_n}\to M^1_w$ and $M^2_{w_n}\to M^2_w$ as $n\to\infty$ which yields $M^1_{w_n}\in (0,\infty)$ and $M^2_{w_n}<\infty$ for sufficiently large $n$. Without loss of generality, we may assume $w_n$ satisfies \eqref{eq:kernelassumption} for all $n$. As before, we denote the cut-off by \(w_n^c(\zb):=w_n(\zb)\chi_{B_1(\bm 0)}(\zb)\) for \(n\in\N_+\), and again satisfies \eqref{eq:kernelassumption}. 
    Similar to the proof of \Cref{lem:equiv_norm_trunc_w}, applying \eqref{Fsym0thorderest} it follows that \begin{equation}
        |\lab^{\nub}_{w_n^c}(\xib)-\lab^{\nub}_{w_n}(\xib)|=|\bm\lambda_{w_n-w_n^c}^{\nub}(\xib)|\le 2\int_{|\zb|>1}w_n(\zb)d\zb\le 2M_w^2+1,
    \end{equation}  
    for $n$ large enough and thus there exists a constant $\tilde{B}$ depending only on $B$, $M_w^2$ and $\sup_n\|u_n\|_{L^2(\R^2)}$ such that
    \[
    \sup_n \int_{\R^d}|\lab^{\nub}_{w_n^c}(\xib)\hat{u}_n(\xib)|^2d\xib\le \tilde{B}^2.
    \]
    For the first part of the proof, similar to the proof of \Cref{thm:loc_cpt_embedding}, it suffices to show that 
    \begin{equation}
        \lim_{\tau\to 0^+}\limsup_{n\to\infty}\|\etab_\tau\hat{u}_n\|_{L^2(\R^d;\C^d)}=0,
    \end{equation} 
    where \(\etab_\tau\) is given by \eqref{defeta}. To that end, after estimating as 
    \[\begin{split}    \|\etab_\tau\hat{u}_n\|_{L^2(\R^d;\C^d)}^2&=\int_{\R^d}|\etab_\tau(\xib)\hat{u}_n(\xib)|^2d\xib\\
        &\le \left(\sup_{\xib\neq \bm 0}\frac{|\etab_\tau(\xib)|}{|\lab^{\nub}_{w_n^c}(\xib)|}\right)^2\int_{\R^d}|\lab^{\nub}_{w_n^c}(\xib)\hat{u}_n(\xib)|^2d\xib\\
        &\le \Tilde{B}^2\left(\sup_{\xib\neq \bm 0}\frac{|\etab_\tau(\xib)|}{|\lab^{\nub}_{w_n^c}(\xib)|}\right)^2,
    \end{split}\]
    it suffices to show that 
    \begin{equation}\label{eq:lim_ratio_eta_lamb}
        \lim_{\tau\to 0^{+}}\limsup_{n\to\infty} \sup_{\xib\neq \bm 0}\frac{|\etab_\tau(\xib)|}{|\lab^{\nub}_{w_n^c}(\xib)|}=0.
    \end{equation}
    Let $\epsilon_0$ and $N_1=N_1(w,d)>0$ be as in \Cref{cut-off-lowerbound} such that for some $C_1=C_1(w,d)>0$ 
    \[|\bm\lambda^{\nub}_{w^c}(\xib)|\ge C_1|\xib|,\quad\forall |\xib|\le N_1.\]
    Applying \eqref{Fsym0thorderest} to $w^c-w_n^c$, we know that for \(n\) sufficiently large,
    \begin{equation*}
        |\lab^{\nub}_{w^c}(\xib)-\lab^{\nub}_{w_n^c}(\xib)|=|\bm\lambda_{w^c-w_n^c}^{\nub}(\xib)|\le 2\sqrt{2}\pi |\xib| M_{w^c-w_n^c}^1\le \frac{1}{2}C_1|\xib|,\quad\forall\xib\in\R^d,
    \end{equation*} 
    since $M_{w^c-w_n^c}^1=M^1_w-M^1_{w_n}\to 0$ as $n\to\infty$.
    Therefore, for \(n\) sufficiently large, one has 
    \[|\bm\lambda^{\nub}_{w_n^c}(\xib)|\ge \frac{1}{2}C_1|\xib|,\quad\forall |\xib|\le N_1.\]
    On the other hand, since \(w_n^c\) satisfies \eqref{eq:kernelassumption}, applying \Cref{lem:lablowerbdd_xilarge} for \(w_n^c\) and \(N=N_1\) we know that there exists a constant \(C_2=C_2(N_1\ep_0,d)>0\) independent of \(n\) such that 
    \[|\bm\lambda^{\nub}_{w_n^c}(\xib)|\ge C_2\int_{\frac{N_1\ep_0}{|\xib|}<|\zb|<1} w_n(\zb)d\zb>0,\quad\forall |\xib|>N_1.\]
    The rest of the proof builds upon the proof of \Cref{lem:compare_eta_lab_fix}. We sketch the proof using the same notations from the proof of \Cref{lem:compare_eta_lab_fix}. Fix \(0<\tau<\ep_0\). To show \eqref{eq:lim_ratio_eta_lamb}, we discuss three cases: \(0< |\xib|\le N_1\), \(N_1\le |\xib|\le N_1\ep_0/\tau\) and \(|\xib|>N_1\ep_0/\tau\). By checking the proof of \Cref{lem:compare_eta_lab_fix} and replacing \(w^c\) by \(w_n^c\), one may show that for \(n\) sufficiently large,
    \[\sup_{0<|\xib|\le N_1} \frac{|\etab_\tau(\xib)|}{|\lab^{\nub}_{w_n^c} (\xib)|}\le 2\frac{c_\eta}{C_1}\tau \]
    and 
    \[\sup_{|\xib|>\frac{N_1\ep_0}{\tau}} \frac{|\etab_\tau(\xib)|}{|\lab^{\nub}_{w_n^c} (\xib)|}\le \frac{c_\eta}{C_2}\left(\int_{\tau<|\zb|<1}w_n(\zb)d\zb\right)^{-1}.\]
    Finally, notice that 
    \[\begin{split}
        \sup_{|\xib|\in \left[N_1,\frac{N_1\ep_0}{\tau}\right]} \frac{|\etab_\tau(\xib)|}{|\lab^{\nub}_{w_n^c} (\xib)|}&\le \frac{c_\eta}{C_2}\sup_{|\xib|\in \left[N_1,\frac{N_1\ep_0}{\tau}\right]} \frac{\tau|\xib|}{\int_{\frac{N_1\ep_0}{|\xib|}<|\zb|<1} w_n(\zb)d\zb}\\
        &=\frac{c_\eta}{C_2} %f(n,\tau) 
        \sup_{s\in \left[N_1\tau,N_1\ep_0\right]}\frac{s}{\int_{\frac{N_1\ep_0 \tau}{s}<|\zb|<1} w_n(\zb)d\zb},
    \end{split}\]
    where we did a change of variables in the last equality. Introduce the notation
    \[f(n,\tau):=\sup_{s\in \left[N_1\tau,N_1\ep_0\right]}\frac{s}{\int_{\frac{N_1\ep_0 \tau}{s}<|\zb|<1} w_n(\zb)d\zb},\]
    we show next that 
    \begin{equation}\label{eq:lim_fntau}
        \lim_{\tau\to 0^+}\limsup_{n\to \infty} f(n,\tau)=0,
    \end{equation}
    
    Suppose that \eqref{eq:lim_fntau} is false. Then there exists \(\al_0>0\) and sequences \(\{\tau_j\}_{j=1}^\infty\), \(\{n_j\}_{j=1}^\infty\) and \(\{s_j\}_{j=1}^\infty\) with \(\tau_j\to 0\), \(n_j\to +\infty\) and \(s_j\in [N_1\tau_j,N_1\ep_0]\) such that
    \[\frac{s_j}{\int_{\frac{N_1\ep_0 \tau_j}{s_j}<|\zb|<1} w_{n_j}(\zb)d\zb}>\al_0.\]
    Arguing as in the proof of \Cref{lem:compare_eta_lab_fix}, one may show that \(\tau_j/s_j\to 0\) as \(j\to \infty\), and without loss of generality, one may assume that \(\{\tau_j/s_j\}_{j=1}^\infty\) is decreasing and \(\{n_j\}_{j=1}^\infty\) is increasing. Since \(\{\chi_{B^c_{N_1\ep_0 \tau_j/s_j}(\bm 0)}(\zb)w_{n_j}(\zb)\}_{j=1}^\infty\) is an increasing sequence of nonnegative functions, by monotone convergence theorem,
    \[\al_0< \frac{s_j}{\int_{\frac{N_1\ep_0\tau_j}{s_j}<|\zb|<1} w_{n_j}(\zb)d\zb}\le \frac{N_1\ep_0}{\int_{\frac{N_1\ep_0\tau_j}{s_j}<|\zb|<1} w_{n_j}(\zb)d\zb}\to\frac{N_1\ep_0}{\int_{|\zb|<1} w(\zb)d\zb}=0\]
    as \(n\to\infty\), a contradiction. Therefore \eqref{eq:lim_fntau} holds, and consequently \eqref{eq:lim_ratio_eta_lamb} holds by combing the three cases. The precompactness follows from \Cref{lem:RKFcpt}. Finally, assuming that $u_n\in\cS^{\nub}_{w_n}(\Om)$ for every $n$, we show that for any limit point \(u\), the zero-extension \(\tilde{u}\) of \(u\) outside \(\Om\) satisfies $\tilde{u}\in \cS^{\nub}_w(\Om)$ with \[\|\mathfrak{G}^{\nub}_{w} \tilde{u}\|_{L^2(\R^d;\R^d)}\le B.\]
    Without loss of generality, assume that \(u_n\to u\) in \(L^2(\Om)\). Then \(u_n\to \tilde{u}\) in \(L^2(\R^d)\). Thus \(\hat{u}_n\to \hat{\tilde{u}}\) in \(L^2(\R^d)\) and up to a subsequence we know \(\hat{u}_n\to \hat{\tilde{u}}\) a.e. in \(\R^d\). On the other hand, by \eqref{Fsym0thorderest} one may show that \(\lab^{\nub}_{w_n}(\xib)\to \lab^{\nub}_w(\xib)\) as \(n\to\infty\) for any \(\xib\in\R^d\) as $M^1_{w_n}\to M^1_w$ and $M^2_{w_n}\to M^2_w$. Then by Fatou's lemma, one obtains that
    \[\int_{\R^d}|\lab^{\nub}_w(\xib) \hat{\tilde{u}}(\xib)|^2 d\xib\le \liminf_{n\to\infty} \int_{\R^d}|\lab^{\nub}_{w_n}(\xib) \hat{u}_n(\xib)|^2 d\xib\le B^2.\]
    This completes the proof.
\end{proof}

As a consequence of the above compactness result \Cref{thm:cpt_seq_nonint_kernel}, we  establish the following  uniform Poincar\'e inequality .
\begin{theorem}\label{thm:unifPoincare_nonint_kernel}
    Let \(\{w_n:n\in\N_+\}\) be a family of kernels as in \Cref{thm:cpt_seq_nonint_kernel}. Then there exist $N_0\in\N_+$ and  $C(N_0)>0$ such that for any $n\in \N_+$ with $n>N_0$, \begin{equation}
        \|u\|_{L^2(\Om)}\le C(N_0)\|\mathfrak{G}^{\nub}_{w_n} u\|_{L^2(\R^d;\R^d)},\quad\forall u\in \cS^{\nub}_{w_n}(\Om).
    \end{equation}
\end{theorem}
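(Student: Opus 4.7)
The plan is to proceed by contradiction, with \Cref{thm:cpt_seq_nonint_kernel} supplying the essential compactness engine and \Cref{lem:labbdd_original}(1) providing the rigidity needed to identify the limit as zero.

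First, I would suppose the conclusion fails. Then for each $k \in \mathbb{N}_+$ there exist $n_k > k$ and a nonzero $u_k \in \mathcal{S}^{\nub}_{w_{n_k}}(\Omega)$ with $\|u_k\|_{L^2(\Omega)} > k \, \|\mathfrak{G}^{\nub}_{w_{n_k}} u_k\|_{L^2(\R^d;\R^d)}$. Normalizing so that $\|u_k\|_{L^2(\Omega)} = 1$, this forces
\[
\|\mathfrak{G}^{\nub}_{w_{n_k}} u_k\|_{L^2(\R^d;\R^d)} < 1/k \longrightarrow 0.
\]
Note that $u_k$ vanishes off $\Omega$, so $\{u_k\}$ is a sequence to which \Cref{thm:cpt_seq_nonint_kernel} applies: the subsequence $\{w_{n_k}\}$ still satisfies $0 \le w_{n_k} \nearrow w$ a.e., and the uniform bound $B = 1$ holds.

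Next, applying \Cref{thm:cpt_seq_nonint_kernel} I would extract a further subsequence (not relabeled) with $u_k \to u$ in $L^2(\Omega)$; the limit satisfies $\|u\|_{L^2(\Omega)} = 1$, and its zero-extension $\tilde u$ belongs to $\mathcal{S}^{\nub}_w(\Omega)$. The Fatou step inside the proof of \Cref{thm:cpt_seq_nonint_kernel} actually yields the sharper inequality
\[
\|\mathfrak{G}^{\nub}_w \tilde u\|_{L^2(\R^d;\R^d)}^2 \le \liminf_{k \to \infty} \|\mathfrak{G}^{\nub}_{w_{n_k}} u_k\|_{L^2(\R^d;\R^d)}^2 = 0,
\]
so $\mathfrak{G}^{\nub}_w \tilde u \equiv 0$.

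Finally, I would pass to Fourier variables via \Cref{prop:SequalH}, which recasts $\mathfrak{G}^{\nub}_w \tilde u = 0$ as $\lab^{\nub}_w(\xib)\,\widehat{\tilde u}(\xib) = 0$ for a.e.\ $\xib \in \R^d$. Invoking \Cref{lem:labbdd_original}(1) with an arbitrary $N > 0$ shows $|\lab^{\nub}_w(\xib)| > 0$ for all $|\xib| \ge N$; sending $N \to 0^+$ gives $|\lab^{\nub}_w| > 0$ on $\R^d \setminus \{\bm 0\}$. Consequently $\widehat{\tilde u} = 0$ a.e., hence $\tilde u \equiv 0$, contradicting $\|u\|_{L^2(\Omega)} = 1$.

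The only delicate point, which I view as bookkeeping rather than a genuine obstacle, is that I rely on the liminf version $\|\mathfrak{G}^{\nub}_w \tilde u\|_{L^2}^2 \le \liminf_k \|\mathfrak{G}^{\nub}_{w_{n_k}} u_k\|_{L^2}^2$ rather than the weaker uniform bound recorded in the statement of \Cref{thm:cpt_seq_nonint_kernel}; this sharper inequality is exactly what the Fatou argument in that proof produces, so no additional work is required. All the remaining structure is the standard compactness-plus-rigidity template for Poincaré-type inequalities.
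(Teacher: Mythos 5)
Your proof is correct, and it takes a genuinely different route in the ``rigidity'' step. The paper's proof begins by defining the best Poincar\'e constant $1/A := \inf\{\|\mathfrak{G}^{\nub}_w u\|_{L^2} : u\in\cS^{\nub}_w(\Om),\ \|u\|_{L^2(\Om)}=1\}$, importing $0<A<\infty$ from the known nonlocal Poincar\'e inequality for the limit kernel $w$ in \cite{han2023nonlocal}; it then contradicts the optimality of $A$ after passing to the limit via \Cref{thm:cpt_seq_nonint_kernel}. You avoid citing that prior inequality: you normalize so that the energies $\|\mathfrak{G}^{\nub}_{w_{n_k}} u_k\|_{L^2}$ tend to $0$, pass to the limit with the Fatou refinement of \Cref{thm:cpt_seq_nonint_kernel} to deduce $\mathfrak{G}^{\nub}_w\tilde u\equiv 0$, and then use \Cref{prop:SequalH} together with \Cref{lem:labbdd_original}(1) to conclude $\widehat{\tilde u}=0$ a.e., hence $\tilde u\equiv 0$, contradicting $\|u\|_{L^2(\Om)}=1$. (For the final step, the cleanest phrasing is: for each fixed $\xib\neq\bm 0$ apply the lemma with $N=|\xib|$, and note $\int_{\R^d}\min(1,w)\,d\xb>0$ because $M_w^1>0$.) The paper's argument is marginally shorter given the cited Poincar\'e inequality and also delivers the sharper quantitative statement that the constant can be taken arbitrarily close to $A$; yours is more self-contained, effectively re-deriving the needed injectivity of $\mathfrak{G}^{\nub}_w$ directly from the Fourier symbol lower bound. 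Your observation that the liminf form of the semicontinuity inequality is available (rather than just the $\sup$ bound recorded in the statement of \Cref{thm:cpt_seq_nonint_kernel}) is accurate and is exactly what the Fatou step there yields; in fact here the full limit is $0$, so any version suffices.
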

\begin{proof}
    Let \[\frac{1}{A}:=\inf \left\{ \|\mathfrak{G}^{\nub}_{w} u\|_{L^2(\R^d;\R^d)}:\, u\in \cS^{\nub}_w(\Om),\|u\|_{L^2(\Om)}=1 \right\}.\]By the nonlocal Poincar\'e inequality for the kernel $w$, i.e., \cite[Theorem~5.1]{han2023nonlocal}, one has $0<A<\infty$. To prove the theorem, we will prove that for any $\ep>0$, there exist $N_0(\ep)>0$ and $A+\ep>0$ such that for any $n>N_0$, \[\|u\|_{L^2(\Om)}\le (A+\epsilon)\|\mathfrak{G}^{\nub}_{w_n}u\|_{L^2(\R^d;\R^d)},\quad\forall u\in \cS^{\nub}_{w_n}(\Om).\]
   We prove this statement by contradiction. Suppose that there exist $C>A$,  sequences $\{n_k\}_{k=1}^{\infty}$ and $\{u_{n_k}\}_{k=1}^\infty$ where $n_k>k$ for all $k\in\N$ such that $u_{n_k}\in \cS^{\nub}_{w_{n_k}}(\Om)$, $\|u_{n_k}\|_{L^2(\Om)}=1$ and \[\|\mathfrak{G}^{\nub}_{w_{n_k}} u_{n_k}\|_{L^2(\R^d;\R^d)}<\frac{1}{C}.\]
    Then by \Cref{thm:cpt_seq_nonint_kernel}, there exists \(u\in \cS^{\nub}_w(\Om)\) such that \(u_{n_k}\to u\) in \(L^2(\Om)\) up to a subsequence and  \[\|\mathfrak{G}^{\nub}_{w} u\|_{L^2(\R^d;\R^d)}\le \frac{1}{C}<\frac{1}{A}.\] But since as a strong limit \(\|u\|_{L^2(\Om)}=1\), this gives a contradiction to the fact that $A$ is chosen the best constant.
\end{proof}

\subsection{Compactness with vanishing nonlocality}

In this subsection, we prove the second compactness criterion which is based on boundedness of the sequence of nonlocal gradient energies  associated with kernels with vanishing nonlocality. To that end, in this section we will be working with the set of kernels \(\{w_\del:\del\in (0,1)\}\) satisfying \eqref{eq:kernelassumption} and the additional assumptions that for any \(R>0\),
\begin{equation}\label{eq:kernelassumption_2}
    \lim_{\del\to 0} \int_{B_R^c(\bm 0)} w_\del(\zb)d\zb = 0,
\end{equation}
\begin{equation}\label{eq:kernelassumption_3}
    \lim_{\del\to 0} \int_{B_R(\bm 0)} |\zb|w_\del(\zb)d\zb = 2d.
\end{equation}
 Conditions \eqref{eq:kernelassumption_2} and \eqref{eq:kernelassumption_3} imply that for any $R>0$
    \begin{equation}\label{eq:kernel_2ndMoment2zero} 
    \lim_{\del\to 0} \int_{B_R(\bm 0)} |\zb|^2 w_\del(\zb)d\zb = 0.
\end{equation}
Indeed, given any $\ep\in (0,R)$, 
\[\begin{split}
    \int_{B_R(\bm 0)}|\zb|^2w_\del(\zb)d\zb&=\int_{B_\ep(\bm 0)} |\zb|^2w_\del(\zb)d\zb+\int_{\ep<|\zb|<R}|\zb|^2w_\del(\zb)d\zb\\
    &\le \ep\int_{B_\ep(\bm 0)} |\zb|w_\del(\zb)d\zb+R^2\int_{B_\ep(\bm 0)^c}w_\del(\zb)d\zb.
\end{split}\]
Letting $\del\to 0$ and using \eqref{eq:kernelassumption_2} and \eqref{eq:kernelassumption_3} we obtain that
\[
\limsup_{\del\to 0} \int_{B_R(\bm 0)}|\zb|^2w_\del(\zb)d\zb\le 2d\ep.
\]
Since $\ep>0$ can be arbitrarily small, \eqref{eq:kernel_2ndMoment2zero} holds. 

A consequence of this is that these conditions on the sequence of kernels are sufficient for the convergence of corresponding nonlocal operators to their local counterparts as $\del$ tends to zero as stated below. The proof is presented in  \Cref{subsec:Appendix1}. 
\begin{proposition}\label{prop:locallim_ptws_lp}
    Let $u\in C^1_c(\R^d)$ and $\vb\in C^1_c(\R^d;\R^d)$. Let \(\{w_\del:\del\in (0,1)\}\) satisfy \cref{eq:kernelassumption,eq:kernelassumption_2,eq:kernelassumption_3}.
    Denote \(\mathcal{G}_\del^{\nub}:=\mathcal{G}_{w_\del}^{\nub}\) and \(\mathcal{D}_\del^{\nub}:=\mathcal{D}_{w_\del}^{\nub}\).
    Then for any $\xb\in\R^d$, 
    \begin{equation}
        \mathcal{G}_\del^{\nub} u(\xb)\to \nabla u(\xb),\quad \del\to 0,
    \end{equation}  
    \begin{equation}
        \mathcal{D}_\del^{\nub} \vb(\xb)\to \mathrm{div}\,\vb(\xb),\quad \del\to 0.
    \end{equation}
    Moreover, for any $p\in [1,\infty]$, the above two convergences are strong in $L^p(\R^d;\R^d)$ and $L^p(\R^d)$ respectively.
\end{proposition}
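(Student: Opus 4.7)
My plan is to first establish pointwise (and in fact uniform) convergence of $\mathcal{G}_\del^{\nub} u$ to $\nabla u$ via a Taylor expansion at $\xb$, and then upgrade to $L^p$ convergence for finite $p$ by treating separately a compact region where the uniform bound applies and a far-field region whose contribution is controlled by the decaying tail mass $M^2_{w_\del}$ arising from \eqref{eq:kernelassumption_2}. The statement for $\mathcal{D}_\del^{\nub}\vb$ will run along identical lines applied componentwise.

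For fixed $R>0$, I would write $u(\xb+\zb)-u(\xb)=\nabla u(\xb)\cdot\zb+r(\xb,\zb)$ with $|r(\xb,\zb)|\le |\zb|\,\omega_u(|\zb|)$, where $\omega_u$ is a modulus of continuity for the uniformly continuous $\nabla u$. Splitting over $\{|\zb|\le R\}$ and $\{|\zb|>R\}$ and pulling out the linear piece yields
\[
\mathcal{G}_\del^{\nub} u(\xb)=A_\del^R\,\nabla u(\xb)+E_1(\xb)+E_2(\xb),
\]
where $A_\del^R:=\int_{|\zb|\le R}\chi_{\nub}(\zb)\tfrac{\zb\otimes\zb}{|\zb|}w_\del(\zb)\,d\zb$, $|E_1(\xb)|\le\omega_u(R)\int_{|\zb|\le R}|\zb|w_\del(\zb)\,d\zb$, and $|E_2(\xb)|\le 2\|u\|_\infty\int_{|\zb|>R}w_\del(\zb)\,d\zb$. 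The key calculation is that $A_\del^R=c_\del^R\,I$ with $c_\del^R=\tfrac{1}{2d}\int_{B_R}|\zb|w_\del(\zb)\,d\zb$: the radial symmetry of $w_\del$ together with the reflection symmetries of the half-space fixing $\nub$ kill the off-diagonal entries, while the rotational symmetry about $\nub$ together with the computation $\int_{S^{d-1}_+}\omega_i^2\,d\sigma=|S^{d-1}|/(2d)$ for every $i$ makes the diagonal entries all equal. By \eqref{eq:kernelassumption_3}, $c_\del^R\to 1$ as $\del\to 0^+$. Hence, given $\ep>0$, I first pick $R$ small so the $|E_1|$ bound is $<\ep/3$ uniformly in $\del$ small (using that $\int_{B_R}|\zb|w_\del$ is eventually close to $2d$), and then pick $\del$ small so that $|c_\del^R-1|\|\nabla u\|_\infty$ and the $|E_2|$ bound are each $<\ep/3$ by \eqref{eq:kernelassumption_2}--\eqref{eq:kernelassumption_3}. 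Since all bounds are $\xb$-independent, this gives uniform (hence pointwise and $L^\infty$) convergence.

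For finite $p\in[1,\infty)$, let $M>0$ satisfy $\supp u\subset B_M$ and set $\Omega=B_{M+1}$. On $\Omega$, the $L^\infty$ convergence directly gives $\|\mathcal{G}_\del^{\nub}u-\nabla u\|_{L^p(\Omega)}\to 0$ by boundedness of $\Omega$. On $\Omega^c$, both $\nabla u$ and $u$ vanish, so after the change of variables $\yb=\xb+\zb$ one has $|\mathcal{G}_\del^{\nub}u(\xb)|\le\int_{B_M}|u(\yb)|w_\del(\yb-\xb)\,d\yb$, and $|\yb-\xb|>1$ throughout this integral. Applying H\"older with the measure $w_\del(\yb-\xb)\,d\yb$ on $B_M$ (whose total mass is bounded by $M^2_{w_\del}$ when $\xb\in\Omega^c$) and then Fubini, I obtain
\[
\int_{\Omega^c}|\mathcal{G}_\del^{\nub}u(\xb)|^p\,d\xb\le (M^2_{w_\del})^p\,\|u\|_{L^p}^p\longrightarrow 0.
\]
The $\mathcal{D}_\del^{\nub}\vb$ convergence follows by the same argument componentwise, the Taylor linear term now producing $c_\del^R\,\mathrm{div}\,\vb(\xb)$ via the trace of the same matrix $A_\del^R$. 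The main obstacle is verifying the matrix identity $A_\del^R=c_\del^R\,I$ and identifying the constant $2d$ as the correct limit from \eqref{eq:kernelassumption_3}; after that, the convergence is a careful bookkeeping via the triangle inequality, and the $L^p$ tail estimate uses a H\"older--Fubini trick that avoids assuming any global integrability of $w_\del$.
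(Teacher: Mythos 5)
Your proof is correct, and it departs from the paper's proof in a few genuinely useful ways, so let me compare.

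The paper reduces to $u\in C^2_c(\R^d)$ (citing density for $C^1_c$), uses a second-order Taylor expansion, and then obtains $L^p$ convergence for finite $p$ by constructing a dominating function $g_\delta$ and applying the generalized dominated convergence theorem together with a Fubini estimate showing $\|g_\delta-g\|_{L^p}\to 0$. You instead work directly with $u\in C^1_c$ via the modulus of continuity of $\nabla u$, which eliminates the density step, and you get $L^\infty$ convergence cleanly as a first step because all three error terms in the decomposition
\[
\mathcal{G}_\del^{\nub} u(\xb)-\nabla u(\xb)=(c_\del^R-1)\nabla u(\xb)+E_1(\xb)+E_2(\xb)
\]
are bounded by quantities independent of $\xb$. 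The order of quantifiers is handled correctly: choose $R$ first so $\omega_u(R)$ beats the eventual bound $\int_{B_R}|\zb|w_\delta\,d\zb<2d+1$, then shrink $\delta$ to control $|c_\del^R-1|$ and the tail mass $\int_{|\zb|>R}w_\delta$. For finite $p$, your split into the fixed compact set $B_{M+1}$ (where $L^\infty$ convergence plus boundedness does the job) and its complement (where $u,\nabla u$ vanish and the H\"older--Fubini computation gives $\int_{B_{M+1}^c}|\mathcal{G}_\del^{\nub}u|^p\le (M^2_{w_\delta})^p\|u\|_{L^p}^p$) is a tidy replacement for the paper's dominated-convergence step; the exponent bookkeeping checks out because the H\"older factor contributes $(M^2_{w_\delta})^{p-1}$ and Fubini contributes one more factor of $M^2_{w_\delta}$. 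Both proofs hinge on the same matrix identity $A^R_\delta=\tfrac{1}{2d}\bigl(\int_{B_R}|\zb|w_\delta(\zb)\,d\zb\bigr)\I_d$, which the paper references to \cite[Remark~2.2]{han2023nonlocal}; your one-line justification of it by symmetry is a sketch rather than a proof, but the identity itself is standard and true. In short: same decomposition idea and same use of the kernel hypotheses, but your route is more elementary (no $C^2$ reduction, no generalized DCT) and makes the uniform convergence explicit rather than implicit.
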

This localization result leads to the natural question about the convergence property of bounded sequence $\{u_\delta\in \mathcal{S}^{\nub}_{w_\delta}(\mathbb{R}^d)\}$; that is, if $\sup_{\delta}|u_\delta|_{\mathcal{S}^{\nub}_{w_\delta}} <\infty,$ is the sequence compact in $L^{2}_{\mathrm{loc}}$? And if yes, what can we say about limit points $u$? As the localization result Proposition \ref{prop:locallim_ptws_lp}   suggest, does $u$ belong in $H^{1}_{\mathrm{loc}}(\mathbb{R}^d)$?  We conjecture that these questions can be answered in the affirmative for the general set of kernels satisfying \eqref{eq:kernelassumption},\eqref{eq:kernelassumption_2}, and \eqref{eq:kernelassumption_3}. Although we are unable to answer these questions in full generality, we will address the question in this subsection for a selected set of sequence of kernels. The types of sequence of  kernels we will be focused on are listed in the following assumption. 

\begin{assumption}\label{ass:kernel4cptthm}
    We assume that \(\{w_\del:\del\in (0,1)\}\) is given by one of the three cases:
    \begin{enumerate}[label=(\Roman*)]
        \item \label{item:first} \(w_\del(\zb):=\del^{-d-1}w(\zb/\del)\) for \(\del\in (0,1)\) where \(w\) satisfies \eqref{eq:kernelassumption} and \[\int_{\R^d} |\zb|w(\zb)d\zb = 2d.\]
        \item \label{item:second} \(w_\del(\zb):=2d\del|\zb|^{\del-d-1}\) for \(\del\in (0,1)\). 
        \item \label{item:third} \(\{w_\del:\del\in (0,1)\}\) is a set of kernels satisfying \cref{eq:kernelassumption,eq:kernelassumption_2,eq:kernelassumption_3}, and there exist constants $R_1>0$, $R_2>0$ and $\bar\del_0\in (0,1)$ and a function $h:(0,\bar\del_0)\to \R_+$ with $\lim_{\del\to 0}h(\delta)=0$ such that 
    \begin{equation}\label{ker_cond_tail_int}
        \lim_{\del\to 0} \int_{|\zb|>R_1h(\del)} w_\del(\zb)d\zb = +\infty
    \end{equation}
    and
    \begin{equation}\label{ker_cond_2nd_mom}
        \sup_{\del\in (0,\bar\del_0)} \frac{1}{h(\del)}\int_{B_{R_2}(\bm 0)} |\zb|^2w_\del(\zb)d\zb:=M_0<+\infty.
    \end{equation}
    In addition, when $d=1$ assume $w_\del(\zb)$ is nonincreasing  for each $\del\in (0,1)$. 
    \end{enumerate}
\end{assumption}

\begin{remark}\label{rem:ThirdKernel}
    A typical example satisfying \Cref{ass:kernel4cptthm}\ref{item:third} is given by 
    \begin{equation}
        w_\del(\zb)=\frac{1}{|\log\del|}\frac{1}{|\zb|(|\zb|+\del)^d}
    \end{equation}
    with $R_1=R_2=\bar\del_0=1$ and $h(\del)=\del$. One may show that the first two types of kernels in \Cref{ass:kernel4cptthm} satisfy \cref{eq:kernelassumption,eq:kernelassumption_2,eq:kernelassumption_3} but not \Cref{ass:kernel4cptthm}\ref{item:third}. Specifically, \ref{item:first} does not satisfy \eqref{ker_cond_2nd_mom} and \ref{item:second} does not satisfy \eqref{ker_cond_tail_int}.
\end{remark}

The main theorem of this subsection is the following.

\begin{theorem}\label{thm:cpt_full}
    Let \(\{w_{\del_n}:\del_n\in(0,1)\}_{n=1}^\infty\) be a sequence of kernels that belongs to either \ref{item:first}, \ref{item:second} or \ref{item:third} of  \Cref{ass:kernel4cptthm}. Suppose $\{u_n\}_{n=1}^\infty\subset L^2(\R^d)$ is a bounded sequence and \[\sup_n |u_n|_{\cS^{\nub}_{w_{\del_n}}(\R^d)}=\sup_n \left\|\mathfrak{G}^{\nub}_{w_{\del_n}} u_n\right\|_{L^2(\R^d;\R^d)}=B<\infty,\]then $\{u_n\}_{n=1}^\infty$ has a compact closure in $L^2(\Om)$ for bounded domain $\Om\subset \R^d$. Moreover, if in addition $\Om$ has a Lipschitz boundary and $u_n\in\cS^{\nub}_{w_{\del_n}}(\Om)$ for every $n$, then any limit point $u\in H_0^1(\Om)$ with \[\|\nabla u\|_{L^2(\Om;\R^d)}\le B.\]
\end{theorem}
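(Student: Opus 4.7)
The plan is to mimic the strategy employed for \Cref{thm:cpt_seq_nonint_kernel}: apply the Riesz--Kolmogorov--Fréchet criterion \Cref{lem:RKFcpt} with $\Pb(\xb) = \chi_{\nub}(\xb)\chi_{B_1(\bm 0)}(\xb)\xb/|\xb|$. By Plancherel, $\|\Pb_\tau * u_n - \pb u_n\|_{L^2(\R^d;\R^d)} = \|\etab_\tau \hat{u}_n\|_{L^2(\R^d;\C^d)}$ with $\etab_\tau$ defined in \eqref{defeta}, and since $\sup_n \|\lab^{\nub}_{w_{\del_n}}\hat u_n\|_{L^2(\R^d;\C^d)} \le B$, the compactness reduces to establishing
\[
    \lim_{\tau\to 0^+}\limsup_{n\to\infty}\sup_{\xib\neq \bm 0}\frac{|\etab_\tau(\xib)|}{|\lab^{\nub}_{w_{\del_n}}(\xib)|}=0.
\]
The numerator will be controlled by $|\etab_\tau(\xib)|\le V_d\min\{\sqrt{2}\pi\tau|\xib|,1\}$ from \eqref{etaest1}; the denominator must be bounded below by a matching two-sided quantity, uniformly in $n$, and this is where the three cases of \Cref{ass:kernel4cptthm} diverge.

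Case \ref{item:first} benefits from an exact rescaling: a change of variables gives $\lab^{\nub}_{w_\del}(\xib) = \del^{-1}\lab^{\nub}_w(\del\xib)$, so \Cref{lem:labbdd_original}(2) applied to $w$ yields $|\lab^{\nub}_{w_\del}(\xib)|\ge C_1|\xib|$ for all $|\xib|\le N_1/\del$, while \Cref{lem:lablowerbdd_xilarge} via the same rescaling handles the large-frequency regime. Case \ref{item:second} is the truncated Riesz-type kernel, for which a direct Fourier computation yields $|\lab^{\nub}_{w_\del}(\xib)|\asymp c_\del|\xib|^\del$; the elementary inequality $\min\{\tau|\xib|,1\}\le (\tau|\xib|)^\del$ combined with a three-region split on $|\xib|$ (relative to $1$ and $\tau^{-1}$) produces the required vanishing estimate. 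Case \ref{item:third} is the most intricate. For moderate frequencies $|\xib|\le R/h(\del)$ with $R$ small, one Taylor-expands $e^{2\pi i\xib\cdot\zb}-1$ over $B_{R_2}(\bm 0)$ to write
\[
    \lab^{\nub}_{w_\del}(\xib) = 2\pi i\,A_\del\,\xib + E^{\mathrm{tail}}_\del(\xib) + E^{\mathrm{Taylor}}_\del(\xib),
\]
where $A_\del := \int_{\cH_{\nub}\cap B_{R_2}}(\zb\otimes\zb)/|\zb|\,w_\del(\zb)\,d\zb$. The normalization \eqref{eq:kernelassumption_3} together with the explicit half-sphere angular integral $\int_{S^{d-1}\cap\cH_{\nub}}\omega\otimes\omega\,d\sigma = \frac{|S^{d-1}|}{2d}I$ shows that $A_\del$ is uniformly bounded below in the positive-definite order, while $E^{\mathrm{Taylor}}_\del$ and $E^{\mathrm{tail}}_\del$ are controlled by \eqref{ker_cond_2nd_mom} and \eqref{eq:kernelassumption_2} respectively, so that choosing $R$ small enough yields $|\lab^{\nub}_{w_\del}(\xib)|\gtrsim |\xib|$. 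For large frequencies $|\xib|>R/h(\del)$, applying \Cref{lem:lablowerbdd_xilarge} to $w_\del$ with $N=R/h(\del)$ and $\ep=R_1 h(\del)$ (so $N\ep=RR_1$ is fixed) produces $|\lab^{\nub}_{w_\del}(\xib)|\ge C(RR_1,d)\int_{|\zb|>R_1 h(\del)}w_\del(\zb)\,d\zb$, which blows up by \eqref{ker_cond_tail_int} and dominates the trivial upper bound $|\etab_\tau|\le V_d$ for $n$ large.

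For the second assertion, extract a subsequence so that $u_n\to u$ in $L^2(\Om)$, extend by zero to $\tilde u\in L^2(\R^d)$, and pass to a further subsequence along which $\hat u_n\to\hat{\tilde u}$ pointwise a.e. The same Taylor-expansion argument (now applied at fixed $\xib$ as $\del_n\to 0$) together with \eqref{eq:kernelassumption_2}, \eqref{eq:kernelassumption_3} and \eqref{eq:kernel_2ndMoment2zero} gives $\lab^{\nub}_{w_{\del_n}}(\xib)\to 2\pi i\,\xib$ pointwise for every $\xib\in\R^d$, the angular integral being normalized to the identity matrix. Fatou's lemma then yields
\[
    4\pi^2\int_{\R^d}|\xib|^2|\hat{\tilde u}(\xib)|^2\,d\xib \le \liminf_{n\to\infty}\int_{\R^d}|\lab^{\nub}_{w_{\del_n}}(\xib)\hat u_n(\xib)|^2\,d\xib \le B^2,
\]
so $\tilde u\in H^1(\R^d)$ with $\|\nabla\tilde u\|_{L^2(\R^d;\R^d)}\le B$. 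The Lipschitz regularity of $\Om$ combined with $\tilde u=0$ off $\Om$ gives $u\in H_0^1(\Om)$ with the stated bound.

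The principal obstacle is Case \ref{item:third}: unlike \ref{item:first} and \ref{item:second}, these kernels admit neither a rescaling identity nor a closed-form symbol, and one must engineer the frequency split at the $\del$-dependent scale $h(\del)^{-1}$ so that \eqref{ker_cond_2nd_mom} absorbs the Taylor remainder in the moderate regime while \eqref{ker_cond_tail_int} drives the symbol to infinity in the large regime. \Cref{rem:ThirdKernel} is useful for confirming that these two conditions are independent of each other. The extra monotonicity hypothesis in $d=1$ is also invoked here through its use in \Cref{lem:lablowerbdd_xilarge}.
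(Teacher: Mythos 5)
Your overall plan mirrors the paper's — the Riesz--Kolmogorov--Fr\'echet criterion of \Cref{lem:RKFcpt} with the symbol estimate reduction, split into three cases per \Cref{ass:kernel4cptthm} — but two of the three symbol estimates you propose have problems.

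\textbf{Case~\ref{item:second}: the exponent is wrong, and this breaks the argument.} The kernel $w_\del(\zb)=2d\del|\zb|^{\del-d-1}=2d\del|\zb|^{-(d+(1-\del))}$, so the symbol scales like $|\xib|^{1-\del}$, not $c_\del|\xib|^\del$ as you write (compare the paper's two-sided bound $C_1(d)|\xib|^{1-\del}\le|\lab^{\nub}_{w_\del}(\xib)|\le C_2(d)|\xib|^{1-\del}$ in~\eqref{frac-lower-upper-bound}, established via an explicit real/imaginary part computation plus \Cref{prop:symblim}). With the exponent $\del$, the elementary inequality $\min\{\tau|\xib|,1\}\le(\tau|\xib|)^\del$ gives only $|\etab_\tau|/|\lab^{\nub}_{w_\del}|\lesssim\tau^\del$, which does \emph{not} vanish in the double limit: $\limsup_{n\to\infty}\tau^{\del_n}=1$ for any fixed $\tau\in(0,1)$ since $\del_n\to 0$. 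With the correct exponent $1-\del$ one obtains $\tau^{1-\del}\le\tau^{1/2}$ (for $\del<1/2$) and the argument recovers. Also note this kernel is supported on all of $\R^d$, not truncated, though that does not affect the outcome.

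\textbf{Case~\ref{item:first}: your choice of lemma has a $d=1$ gap.} For the large-frequency regime you invoke \Cref{lem:lablowerbdd_xilarge} "via the same rescaling", but that lemma requires $w$ to be nonincreasing when $d=1$, a hypothesis absent from \Cref{ass:kernel4cptthm}\ref{item:first}. The paper instead applies \Cref{lem:labbdd_original}(1) — which carries no monotonicity requirement — to get $|\lab^{\nub}_{w_\del}(\xib)|\ge \del^{-1}C_1\int\min(1,w)\ge C_2/\tau$ for $|\xib|\ge N_1/\del$ and $\del\le\tau$, and this suffices.

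\textbf{Case~\ref{item:third} and the $H^1_0$ step: legitimate alternatives.} Your Taylor expansion of the full symbol into $2\pi i\,A_\del\xib+E^{\mathrm{tail}}_\del+E^{\mathrm{Taylor}}_\del$ is a plausible alternative to the paper's \Cref{lem:lowerbd_im_re}, which instead truncates the kernel ($w_\del^c$, justified by \Cref{rem:equiv_norm_trunc_w_delta}) and bounds the imaginary and real parts separately via $\sin x\ge x-x^2/2$ and \Cref{lem:lablowerbdd_xilarge}. The truncation is not cosmetic: it removes the tail entirely rather than requiring you to show the first moment of the tail $\int_{|\zb|>R_2}|\zb|w_\del\,d\zb\to 0$, which you would need to absorb $E^{\mathrm{tail}}_\del$ into the linear leading term uniformly for small $|\xib|$ (the crude bound $|E^{\mathrm{tail}}_\del|\le 2\int_{|\zb|>R_2}w_\del$ is $\xib$-independent and would dominate $c|\xib|$ near the origin). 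This can be patched but the paper's route is cleaner. Your choice of $N=R/h(\del)$, $\ep=R_1 h(\del)$ so that $N\ep$ is a fixed constant is exactly the right way to deploy \Cref{lem:lablowerbdd_xilarge} here. For the final $H^1_0$ assertion, the Fatou argument on the Fourier side (after establishing $\lab^{\nub}_{w_{\del_n}}(\xib)\to 2\pi i\,\xib$ pointwise) is a correct alternative to the paper's distributional pairing with $C^\infty_c$ test functions and \Cref{prop:locallim_ptws_lp}; in fact it parallels the technique the paper uses at the end of \Cref{thm:cpt_seq_nonint_kernel}.
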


The theorem will be shown for the three classes  of kernels in \cref{ass:kernel4cptthm} separately, each requiring subtly different techniques. From now on, we let $\{\del_n\}_{n=1}^\infty$ be a sequence of positive numbers such that $\del_n\in(0,1)$ and $\del_n\to 0$ as $n\to\infty$.

\begin{proof}[Proof of \Cref{thm:cpt_full} for the class of kernels satisfying  \Cref{ass:kernel4cptthm}\ref{item:first} ]
    Taking the Fourier transform and by applying   \Cref{prop:SequalH} we have, \[\sup_n \left\|\bm\lambda_{{w_{\del_n}}}^{\nub}\hat{u}_n\right\|_{L^2(\R^d;\C^d)}=\sup_n \left\|\mathfrak{G}^{\nub}_{w_{\del_n}} u_n\right\|_{L^2(\R^d;\R^d)}=B<\infty.\]
   We prove the theorem in two step. We first show the sequence is compact in the $L^{2}_{\mathrm{loc}}$ toplogy, and then demonstrate that any limit point is in $H^{1}_{0}(\Omega)$. 
   
   \textbf{Step I}. To show that $\{u_n\}_{n=1}^\infty$ has a compact closure in $L^2(\Om)$ for bounded domain $\Om\subset \R^d$, we will apply the compactness criterion given in \Cref{lem:RKFcpt}. For that we 
    choose $P(\xb):=\chi_{\nub}(\xb)\chi_{B_1(\bm 0)}(\xb)\xb/|\xb|$ and demonstrate that  \begin{equation}\label{limlimsup}
        \lim_{\tau\to 0^+}\limsup_{n\to\infty}\|\Pb_\tau*u_n-\pb u_n\|_{L^2(\R^d;\R^d)}=0.
    \end{equation} 
   As before, we consider \(\etab_\tau\) given by \eqref{defeta}.
   
    \textbf{Claim}. There exists $C=C(w,d)$ such that for $0<\del\le \tau<1$, \begin{equation}\label{I-etatau}
        |\bm\eta_\tau(\xib)|\le C\tau|\bm\lambda^{\nub}_{w_\del}(\xib)|,\quad\forall\xib\in\R^d.
    \end{equation}
    Once this claim is proved, \eqref{limlimsup} holds. Indeed, since for any $\tau\in (0,1)$, there exists $n_\tau>0$ such that for any $n>n_\tau$, $\del_n<\tau$ we have $|\bm\eta_\tau(\xib)|\le C\tau|\bm\lambda^{\nub}_{n}(\xib)|$ for all $\xib\in\R^d$. Thus for $n>n_\tau$, 
    \[\begin{split}
        \|\Pb_\tau*u_n-\pb u_n\|^2_{L^2(\R^d;\R^d)}&=\int_{\R^d}|\bm\eta_\tau(\xib)\hat{u}_n(\xib)|^2d\xib\\
        &\le C\tau^2\int_{\R^d}\left|\bm\lambda_{{w_{\del_n}}}^{\nub}(\xib)\hat{u}_n(\xib)\right|^2d\xib\le CB^2\tau^2.
    \end{split}\]
  What remains is to prove the above claim.  By a change of variable, one may check that $\bm\lambda^{\nub}_{w_\del}(\xib)=\del^{-1}\bm\lambda^{\nub}_{w}(\del\xib)$ for all $\xib\in\R^d$. Therefore, for $0<\del\le \tau<1$, by \Cref{lem:labbdd_original}, there exist constants \(N_1=N_1(w,d)\) and \(C_1=C_1(w,d)\) such that \[|\bm\lambda^{\nub}_{w_\del}(\xib)|\ge C_1|\xib|,\quad\forall |\xib|<\frac{N_1}{\del} \] and \[|\bm\lambda^{\nub}_{w_\del}(\xib)|\ge \frac{C_1}{\del}\int_{\R^d}\min(1,w(\xb))d\xb\ge \frac{C_2(w,d)}{\tau},\quad\forall |\xib|\ge \frac{N_1}{\del} .\]Comparing these estimates with \eqref{etaest1} proves the \eqref{I-etatau}, completing the proof of the claim. 

    \textbf{Step II}. Suppose $u_n\in\cS^{\nub}_{w_{\del_n}}(\Om)$ for every $n$ and $u$ is a limit point of $\{u_n\}_{n=1}^\infty$ in $L^2(\Om)$, that is, $u_n\to u$ in $L^2(\Om)$ up to a subsequence. Then we show that  $u\in H_0^1(\Om)$ and $\|\nabla u\|_{L^2(\Om;\R^d)}\le B$. Let $\tilde{u}$ be the zero extension of $u$ outside $\Om$, i.e, \[\tilde{u}(\xb)=\begin{cases}
        u(\xb),& \xb\in\Om,\\
         0,& \xb\in\Om^c.
    \end{cases}\]Then $u_n\to \tilde{u}$ in $L^2(\R^d)$. By definition of distributional gradient, for any $\phib\in C^\infty_c(\R^d;\R^d)$, 
    \begin{equation*}
        \begin{split}
            \left|\int_{\R^d} u_n(\xb)\mathcal{D}^{-\nub}_{w_{\del_n}}\phib(\xb)d\xb\right|&=\left| -\int_{\R^d}\mathfrak{G}^{\nub}_{w_{\del_n}} u_n(\xb)\cdot \phib(\xb)d\xb \right|\\
            &\le \|\mathfrak{G}^{\nub}_{w_{\del_n}} u_n\|_{L^2(\R^d;\R^d)}\|\phib\|_{L^2(\R^d;\R^d)}\\
            &\le B\|\phib\|_{L^2(\R^d;\R^d)}.
        \end{split}
    \end{equation*}
    Passing to the limit as $n\to\infty$, one obtains 
    \begin{equation}\label{uinH1pre}
        \left|\int_{\R^d}\tilde{u}(\xb)\mathrm{div}\,\phib(\xb)d\xb\right|\le B\|\phib\|_{L^2(\R^d;\R^d)},\quad\forall \phib\in C^\infty_c(\R^d;\R^d),
    \end{equation}
    where we used \Cref{prop:locallim_ptws_lp} to assert $\mathcal{D}^{-\nub}_{w_{\del_n}}\phib\to\mathrm{div}\,\phib$ in $L^2(\R^d)$. Therefore, $\tilde{u}\in H^1(\R^d)$ with 
    \[\|\nabla \tilde{u}\|_{L^2(\R^d;\R^d)}\le B.\] 
    Then
    by \cite[Theorem~3.7]{wloka1987partial}, $u\in H_0^1(\Om)$ and \(\|\nabla u\|_{L^2(\Om;\R^d)}\le B\), where we used the fact $\Om$ is a bounded Lipschitz domain.
\end{proof}

\begin{proof}[Proof of \Cref{thm:cpt_full} for the class of kernels satisfying  \Cref{ass:kernel4cptthm}\ref{item:second} ]
Again we will use the Riesz-Kolmogorov-Fr\'echet criterion \Cref{lem:RKFcpt}.  The key ingredient is a good lower bound on the norm of Fourier symbol \(\lab^{\nub}_{w_\del}\).

We {\bf claim} that if \(w_\del(\zb)=2d\del|\zb|^{\del-d-1}\), then there exist constants \(C_1(d)>0\), \(C_2(d)>0\) and \(\del_0\in (0,1/2)\) such that for any \(\del\in (0,\del_0)\),
    \begin{equation}\label{frac-lower-upper-bound}
        C_1(d)|\xib|^{1-\del}\le |\lab^{\nub}_{w_\del}(\xib)| \le C_2(d) |\xib|^{1-\del},\quad\forall\xib\in\R^d.
    \end{equation}
    We will prove \eqref{frac-lower-upper-bound} shortly but a consequence of it is that there exists $C=C(d)$ such that for $\tau\in(0,1)$ and \(\del\in (0,\del_0)\), \begin{equation}\label{eta-estimate-typeII}
        |\bm\eta_\tau(\xib)|\le C\tau^{\frac{1}{2}}|\bm\lambda^{\nub}_{w_\del}(\xib)|,\quad\forall\xib\in\R^d,
    \end{equation}
    from which the condition in \Cref{lem:RKFcpt} is satisfied. 

To prove \eqref{eta-estimate-typeII}, we estimate it in two regimes. Let \(\del_0\) be the constant used in \eqref{frac-lower-upper-bound}. Then for any $\delta\in (0, \delta_0)$ and \(|\xib|\le 1/\tau\), using \eqref{etaest1} and \eqref{frac-lower-upper-bound} we know that there exists a constant \(C(d)>0\) such that
    \[\frac{|\etab_\tau(\xib)|}{|\lab^{\nub}_{w_\del}(\xib)|} \le C(d)\frac{\tau|\xib|}{|\xib|^{1-\del}}\le C(d)\tau|\xib|^\del\le C(d)\tau^{1-\del}\le C(d)\tau^{\frac{1}{2}}.\]
    Similarly, for \(|\xib|>1/\tau\), again using using \eqref{etaest1} and \eqref{frac-lower-upper-bound}, we have 
    \[\frac{|\etab_\tau(\xib)|}{|\lab^{\nub}_{w_\del}(\xib)|} \le C(d)\frac{1}{|\xib|^{1-\del}}\le C(d)\tau^{1-\del}\le C(d)\tau^{\frac{1}{2}}.\]

Let us now prove \eqref{frac-lower-upper-bound}. By \eqref{deffouriersymboldiv} and the form of \(w_\del\), for \(\del\in (0,1/2)\) and \(\xib\in\R^d\) we have
\[\begin{split}
    |\lab^{\nub}_{w_\del}(\xib)| &\le 2d\int_{\R^d} \frac{\del}{|\zb|^{d+1-\del}} \left|e^{2\pi i\zb\cdot\xib}-1\right|d\zb\\
    &= 2d|\xib|^{1-\del}\int_{\R^d} \frac{\del}{|\zb|^{d+1-\del}} \left|e^{2\pi i\zb\cdot\xib/|\xib|}-1\right|d\zb\\
    &\le 2d\del |\xib|^{1-\del} \left(\int_{B_1(\bm 0)}\frac{2\pi|\zb|}{|\zb|^{d+1-\del}}d\zb + \int_{B_1(\bm 0)^c}\frac{2}{|\zb|^{d+1-\del}} d\zb \right)\\
    &=2d\del |\xib|^{1-\del}\om_{d-1} \left(\frac{2\pi}{\del}+\frac{2}{1-\del}\right)\\
    &\le 8d\pi \om_{d-1}|\xib|^{1-\del}.
\end{split}\]    
For the lower bound, we recall from \cite[Lemma~5.1]{han2023nonlocal} that when \(d\ge 2\) the norm of imaginary part of \(\lab^{\nub}_{w_\del}(\xib)\) is given by
\[\Im (\lab^{\nub}_{w_\del})(\xib)
=2d\om_{d-2} \int_{0}^{\frac{\pi}{2}}\cos(\theta)\sin^{d-2}(\theta) \int_{0}^{\infty} \frac{\del}{r^{2-\del}}\sin(2\pi|\xib|r\cos(\theta)) drd\theta.\]
By a change of variable we have
\[|\lab^{\nub}_{w_\del}(\xib)|\ge 2d|\xib|^{1-\del} \om_{d-2} \int_{0}^{\frac{\pi}{2}}\cos^{2-\del}(\theta)\sin^{d-2}(\theta) \int_{0}^{\infty} \frac{\del}{r^{2-\del}}\sin(2\pi r) drd\theta.\]
By \Cref{prop:symblim} in \Cref{subsec:Appendix3} we know as \(\del\to 0\),
\[\int_{0}^{\frac{\pi}{2}}\cos^{2-\del}(\theta)\sin^{d-2}(\theta) \int_{0}^{\infty} \frac{\del}{r^{2-\del}}\sin(2\pi r) drd\theta\to 2\pi \int_{0}^{\frac{\pi}{2}}\cos^{2}(\theta)\sin^{d-2}(\theta)d\theta>0.\]
Then there exist constants \(C_1(d)>0\) and \(\del_0\in (0,1/2)\) such that for any \(\del\in (0,\del_0)\),
\[|\lab^{\nub}_{w_\del}(\xib)|\ge C_1(d)|\xib|^{1-\del},\quad\forall\xib\in\R^d.\]
For \(d=1\) the proof is similar as one can show \(|\lambda^{\nub}_{w_\del}(\xi)|\ge 2|\xi|^{1-\del} \int_{0}^{\infty} \frac{\del}{r^{2-\del}}\sin(2\pi r) dr\) for any \(\xi\in\R\). Thus the inequality \eqref{frac-lower-upper-bound} is established.

\end{proof}

{ To prove \Cref{thm:cpt_full} when the set of kernels satisfies \Cref{ass:kernel4cptthm}\ref{item:third}, we first establish lower bound estimates of the parameterized symbols $\lab^{\nub}_{w_\del^c} (\xib)$ that is uniform in $ \delta$. Here $w_\del^c(\zb)$ stands for the compactly supported kernel after truncation $w_\del^c(\zb):=w_\del(\zb)\chi_{B_{R_2}(\bm 0)}(\zb)$. We achieve this by separately estimating  the real and imaginary parts of  $\lab^{\nub}_{w_\del^c} (\xib)$. We recall that we can write 
\[
\lab^{\nub}_{w_\del^c} (\xib) = \Re(\lab^{\nub}_{w_\del^c})(\xib) + i\Im(\lab^{\nub}_{w_\del^c})(\xib)
\]
where 
\[
\Re(\lab^{\nub}_{w_\del^c})(\xib) = \int_{B_{R_2}(\bm 0)}\chi_{\nub}(\zb) {\zb\over|\zb|}w_{\delta}(\zb)(\cos(2\pi\xib\cdot\zb) - 1)d\zb \quad \text{and}
\]
\[
\Im(\lab^{\nub}_{w_\del^c})(\xib) = \int_{B_{R_2}(\bm 0)}\chi_{\nub}(\zb) {\zb\over|\zb|}w_{\delta}(\zb)\sin(2\pi\xib\cdot\zb)d\zb.
\]
We estimate $\left|\Im(\lab^{\nub}_{w_\del^c})(\xib)\right|$ and $\left|\Re(\lab^{\nub}_{w_\del^c})(\xib)\right|$ for small $|\xib|$ and large $|\xib|$ respectively.

\begin{lemma}\label{lem:lowerbd_im_re}
    Let \(\{w_\del:\del\in(0,1)\}\) be a family of kernels satisfying \Cref{ass:kernel4cptthm}. Then there exist constants $\bar\del_1=\bar\del_1(h,R_1,R_2)\in (0,\bar\del_0)$, $C_0=C_0(M_0,d)$, $C_1=C_1(d)$ and $C_2=C_2(R_1,M_0,d)$ such that for any $\del\in (0,\bar\del_1)$,
    \begin{equation}\label{eq:Imest}
        \left|\Im(\lab^{\nub}_{w_\del^c})(\xib)\right|\ge C_1|\xib|,\quad\forall |\xib|\le \frac{C_0}{h(\del)},
    \end{equation}
    and 
    \begin{equation}\label{eq:Reest}
        \left|\Re(\lab^{\nub}_{w_\del^c})(\xib)\right|\ge C_2\int_{|\zb|\ge R_1 h(\del)}w_\del^c(\zb)d\zb>0,\quad\forall |\xib|> \frac{C_0}{h(\del)}.
    \end{equation}
\end{lemma}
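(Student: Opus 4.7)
My plan is to handle the two estimates separately with complementary techniques: for \eqref{eq:Imest} I will Taylor-expand the sine kernel to extract a linear-in-$\xib$ main term that dominates in the low-frequency regime, while for \eqref{eq:Reest} I will apply \Cref{lem:lablowerbdd_xilarge} to the truncated kernel $w_\del^c$ with parameters tuned so that the resulting constant is uniform in $\del$.

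For \eqref{eq:Imest}, using the elementary bound $|\sin y-y|\le y^2/2$, I will split
\[
\Im(\lab^{\nub}_{w_\del^c})(\xib)=2\pi\, M_\del\,\xib+E_\del(\xib),\qquad M_\del:=\int_{B_{R_2}(\bm 0)}\chi_{\nub}(\zb)\,\frac{\zb\otimes\zb}{|\zb|}\,w_\del(\zb)\,d\zb,
\]
with the error bound $|E_\del(\xib)|\le 2\pi^2 M_0\,h(\del)|\xib|^2$ from \eqref{ker_cond_2nd_mom}. The radial symmetry of $w_\del$ together with the rotational invariance of $\mathcal{H}_{\nub}$ about $\nub$ force
\[
M_\del=A_\del\,\nub\otimes\nub+B_\del\,(I-\nub\otimes\nub),
\]
and passing to polar coordinates expresses both $A_\del$ and $B_\del$ as strictly positive dimensional constants times the single radial moment $\int_0^{R_2}r^d w_\del(r)\,dr=\om_{d-1}^{-1}\int_{|\zb|\le R_2}|\zb|w_\del(\zb)\,d\zb$, which converges to $2d/\om_{d-1}$ by \eqref{eq:kernelassumption_3}. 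It follows that $|M_\del\xib|\ge c_1|\xib|$ for some $c_1=c_1(d)>0$ once $\del$ is small, and choosing $C_0=C_0(M_0,d)$ so that $2\pi^2 M_0\,C_0\le \pi c_1$ lets the main term dominate the error whenever $|\xib|\le C_0/h(\del)$, yielding \eqref{eq:Imest} with $C_1=\pi c_1$.

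For \eqref{eq:Reest} I will first check that $w_\del^c$ satisfies \eqref{eq:kernelassumption} (and remains nonincreasing when $d=1$), so that \Cref{lem:lablowerbdd_xilarge} is applicable. Taking $N:=C_0/h(\del)$ and $\ep:=R_1 h(\del)$ in that lemma keeps $N\ep=C_0 R_1$ independent of $\del$, and the resulting constant $C=C(C_0 R_1,d)$ depends only on $(R_1,M_0,d)$. For $|\xib|>C_0/h(\del)$ this gives
\[
|\Re(\lab^{\nub}_{w_\del^c})(\xib)|\ge C\int_{|\zb|>C_0 R_1/|\xib|}w_\del^c(\zb)\,d\zb\ \ge\ C\int_{|\zb|>R_1 h(\del)}w_\del^c(\zb)\,d\zb,
\]
since $C_0 R_1/|\xib|<R_1 h(\del)$ in this frequency range. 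Strict positivity for small $\del$ follows by combining \eqref{ker_cond_tail_int}, which forces $\int_{|\zb|>R_1 h(\del)}w_\del\,d\zb\to\infty$, with \eqref{eq:kernelassumption_2}, which forces $\int_{|\zb|>R_2}w_\del\,d\zb\to 0$; thus the portion of the tail integrated inside $B_{R_2}(\bm 0)$ still blows up.

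The main obstacle I anticipate is producing a lower bound on $M_\del$ that is uniform as $\del\to 0$, since $w_\del$ concentrates at the origin and naive localization would lose control. The key is that the radial-plus-half-space symmetry collapses the matrix $M_\del$ to the two scalar unknowns $A_\del,B_\del$, each essentially the first radial moment of $w_\del$ on $B_{R_2}(\bm 0)$, which is then controlled uniformly by \eqref{eq:kernelassumption_3}.
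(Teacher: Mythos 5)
Your proposal is correct and follows essentially the same approach as the paper: both Taylor-expand the sine to a linear main term plus quadratic error, control the main term via \eqref{eq:kernelassumption_3} and the error via \eqref{ker_cond_2nd_mom} with $C_0$ chosen so the main term wins for $|\xib|\le C_0/h(\del)$, and both obtain \eqref{eq:Reest} by invoking \Cref{lem:lablowerbdd_xilarge} with the $\delta$-dependent choice $N=C_0/h(\del)$, $\ep=R_1 h(\del)$ (so that $N\ep$ is a fixed constant) and then conclude strict positivity of the tail integral from \eqref{ker_cond_tail_int} together with \eqref{eq:kernelassumption_2}. The only cosmetic difference is that you work with the vector-valued Taylor decomposition and a symmetry argument for $M_\del$ (which in fact reduces to a scalar multiple of the identity, $M_\del=\tfrac{1}{2d}\bigl(\int_{B_{R_2}}|\zb|w_\del\,d\zb\bigr)\I_d$), while the paper starts from the explicit scalar formula for $|\Im(\lab^{\nub}_{w_\del^c})|$ from Lemma 5.1 of \cite{han2023nonlocal}, which bakes in the positive sign so that the one-sided bound $\sin x\ge x-x^2/2$ suffices without a reverse-triangle step.
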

\begin{proof}
    Without loss of generality, we assume $d\ge 2$. The case $d=1$ can be proved similarly. In \cite[Lemma~5.1]{han2023nonlocal} it is shown that
    \[\left|\Im (\lab^{\nub}_{w_\del^c})(\xib)\right|=\left|\om_{d-2}\int_{0}^{\frac{\pi}{2}}\cos(\theta)\sin^{d-2}(\theta)\int_0^{R_2}r^{d-1}\overline{w_\del}(r)\sin(2\pi|\xib|r\cos(\theta))drd\theta\right|.\]
    Using the inequality $\sin x\ge x-x^2/2$ for $x\ge 0$, one obtains that 
    \[\begin{split}
       \left|\Im (\lab^{\nub}_{w_\del^c})(\xib)\right|&\ge  \om_{d-2}\int_{0}^{\frac{\pi}{2}}\cos(\theta)\sin^{d-2}(\theta)\cdot\\
       &\quad\int_0^{R_2}r^{d-1}\overline{w_\del}(r)\left(2\pi|\xib|r\cos(\theta)-2\pi^2|\xib|^2r^2\cos(\theta)^2\right)drd\theta\\
       &=|\xib|\left(c_1(d)\int_{B_{R_2}(\bm 0)}|\zb|w_\del(\zb)d\zb-c_2(d)|\xib|\int_{B_{R_2}(\bm 0)}|\zb|^2w_\del(\zb)d\zb\right)
    \end{split}\]
    where $c_1(d)$ and $c_2(d)$ are positive constants. Since 
    \[
    \lim_{\del\to 0}h(\del)=0\quad\text{and}\quad\lim_{\del\to 0}\int_{B_{R_2}(\bm 0)}|\zb|w_\del(\zb)d\zb=2d,
    \]
    there exists $\bar\del_1\in (0,\bar\del_0)$ depending on $R_1$ and $R_2$ such that \[R_1h(\del)<R_2\quad\text{and}\quad\int_{B_{R_2}(\bm 0)}|\zb|w_\del(\zb)d\zb>d\] for any $\del\in (0,\bar\del_1)$. Choose $C_0=C_0(M_0,d)$ such that $C_0M_0c_2(d)=c_1(d)d/2$, then for any $\del\in (0,\bar\del_1)$ and $|\xib|\le C_0/h(\del)$, by the calculation above and \eqref{ker_cond_2nd_mom},
    \[\Im (\lab^{\nub}_{w_\del^c})(\xib)\ge |\xib|\left(c_1(d)d-c_2(d)\frac{C_0}{h(\del)}\int_{B_{R_2}(\bm 0)}|\zb|^2w_\del(\zb)d\zb\right)\ge \frac{1}{2}c_1(d)d|\xib|.\]
    Then \eqref{eq:Imest} is shown where $C_1=c_1(d)d/2$.
    By \Cref{lem:lablowerbdd_xilarge}, taking $N=C_0/h(\del)$ and $\ep=R_1h(\del)$ there exist a constant $C_2=C_2(R_1,C_0,d)=C_2(R_1,M_0,d)>0$ such that for $|\xib|> C_0/h(\del)$,
    \[\left|\Re(\lab^{\nub}_{w_\del^c})(\xib)\right|\ge C_2\int_{|\zb|\ge \frac{C_0R_1}{|\xib|}}w_\del^c(\zb)d\zb\ge \int_{|\zb|\ge  R_1 h(\del)}w_\del^c(\zb)d\zb.\]
    Note that \eqref{ker_cond_tail_int} implies that $B_{R_1h(\del)}(\bm 0)\subset\supp\,w^{c}_\del$ for $\del$ small enough. Thus without loss of generality one may assume that $B_{R_1h(\del)}(\bm 0)\subset\supp\,w_\del$ for $\del\in (0,\bar\del_0)$. 
    The proof is then complete.
\end{proof}}

\begin{remark}\label{rem:equiv_norm_trunc_w_delta}
Following the exact proof of \Cref{lem:equiv_norm_trunc_w}, one may show that 
for \(\{w_\del:\del\in(0,1)\}\) be a family of kernels satisfying \eqref{eq:kernelassumption}, the function spaces $\cS^{\nub}_{w_\del}(\R^d)$ and $\cS^{\nub}_{w^c_\del}(\R^d)$ are equivalent with equivalent norms. 
    For a fixed constant $R>0$, if we denote  $w_\del^c(\zb):=w_\del(\zb)\chi_{B_R(\bm 0)}(\zb)$ as the compactly supported kernel obtained from a truncation of $w_\del$, then there exist constants $\bar\del_2\in (0,1)$ depending on $R$ such that for any $\del\in (0,\bar\del_2)$
    \begin{equation}
        \frac{1}{\sqrt{2}}\|u\|_{\cS^{\nub}_{w_\del}(\R^d)}\le \|u\|_{\cS^{\nub}_{w_\del^c}(\R^d)}\le \sqrt{2}\|u\|_{\cS^{\nub}_{w_\del}(\R^d)},\quad\forall u\in \cS^{\nub}_{w_\del}(\R^d).
    \end{equation}
\end{remark}
We are now ready to prove \Cref{thm:cpt_full} for the last class of kernels.

\begin{proof}[Proof of \Cref{thm:cpt_full} for the class of kernels satisfying \Cref{ass:kernel4cptthm}\ref{item:third}]
    Denote $w_{\del_n}^c(\zb):=w_{\del_n}(\zb)\chi_{B_{R_2}(\bm 0)}(\zb)$ where $R_2$ is given by \eqref{ker_cond_2nd_mom}. Similar to the proof of \Cref{thm:cpt_seq_nonint_kernel} and in view of \Cref{rem:equiv_norm_trunc_w_delta}, one can show that there exists a constant $\tilde{B}$ depending only on $B$ and $\sup_n\|u_n\|_{L^2(\R^2)}$ such that
    \[
    \sup_n \int_{\R^d}|\lab^{\nub}_{w_{\del_n}^c}(\xib)\hat{u}_n(\xib)|^2d\xib\le \tilde{B}^2.
    \]
    As in the proof of \Cref{thm:cpt_seq_nonint_kernel}, it suffices to show 
    \begin{equation}\label{eq:lim_w_del_n}
        \lim_{\tau\to 0}\limsup_{n\to \infty}\sup_{\xib\neq 0}\frac{|\etab_{\tau}(\xib)|}{|\lab^{\nub}_{w_{\del_n}^c}(\xib)|}=0.
    \end{equation}
    Choose $\bar\del_1$ as in \Cref{lem:lowerbd_im_re}. Since $\del_n\to 0$ as $n\to\infty$, there exists $N>0$ such that for any $n> N$, $\del_n\in (0,\bar\del_1)$. For $n>N$, we consider two cases \(|\xib|\le C_0/h(\del_n)\) and \(|\xib|> C_0/h(\del_n)\).

    For \(|\xib|\le C_0/h(\del_n)\), using \eqref{eq:Imest} in \Cref{lem:lowerbd_im_re} and \eqref{etaest1} we know that there exists a constant \(C(d)>0\) such that
    \[\frac{|\etab_\tau(\xib)|}{|\lab^{\nub}_{w_{\del_n}^c}(\xib)|} \le C(d)\frac{\tau|\xib|}{|\xib|}\le C(d)\tau.\]
   Similarly,  for \(|\xib|>C_0/h(\del_n)\), \eqref{eq:Reest} in \Cref{lem:lowerbd_im_re} and \eqref{etaest1} yield
    \[\frac{|\etab_\tau(\xib)|}{|\lab^{\nub}_{w_{\del_n}^c}(\xib)|} \le \frac{C(d)}{C_2(R_1,M_0,d)\int_{|\zb|>R_1h(\del_n)}w_{\del_n}^c(\zb)d\zb}\to 0,\quad n\to\infty,\]
    where we used \eqref{ker_cond_tail_int} and \eqref{eq:kernelassumption_2} to obtain 
    \[    \int_{|\zb|>R_1h(\del_n)}w_{\del_n}^c(\zb)d\zb=\int_{|\zb|>R_1h(\del_n)}w_{\del_n}(\zb)d\zb-\int_{|\zb|>R_2}w_{\del_n}(\zb)d\zb\to\infty,\quad n\to\infty.
    \]
    Therefore, \eqref{eq:lim_w_del_n} holds and the rest of the proof follows is similar to the proof given for the other class of kernels.
\end{proof}

We conclude this section by presenting an application of the compactness result to prove a uniform Poincar\'e inequality applicable the parameterized function spaces $\cS^{\nub}_{w_\del}(\Om)$. The proof of this inequality follows the exact procedures as the proof of 
\Cref{thm:unifPoincare_nonint_kernel} and thus omitted.
\begin{theorem}\label{thm:unifPoincare}
    Let \(\{w_\del:\del\in (0,1)\}\) be a family of kernels given by \Cref{ass:kernel4cptthm}. Assume $\Om$ is a bounded  Lipschitz domain. Then there exist $\del_0>0$ and  $C(\del_0)>0$ such that for any $\del\in (0,\del_0)$, \begin{equation}
        \|u\|_{L^2(\Om)}\le C(\del_0)\|\mathfrak{G}^{\nub}_{w_\del} u\|_{L^2(\R^d;\R^d)},\quad\forall u\in \cS^{\nub}_{w_\del}(\Om).
    \end{equation}
\end{theorem}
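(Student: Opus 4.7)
The plan is to argue by contradiction, mirroring the structure of the proof of \Cref{thm:unifPoincare_nonint_kernel}, but using \Cref{thm:cpt_full} in place of \Cref{thm:cpt_seq_nonint_kernel} so that the limit sits in $H_0^1(\Om)$, where the classical Poincar\'e inequality applies.

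Suppose the conclusion is false. Then for every $\del_0>0$ and every $C>0$, one can find some $\del\in (0,\del_0)$ and some $u\in \cS^{\nub}_{w_\del}(\Om)$ violating the inequality. Iterating this with $\del_0=1/k$ and $C=k$, I would extract a sequence $\del_n\to 0^+$ and $u_n\in \cS^{\nub}_{w_{\del_n}}(\Om)$ with
\[
\|u_n\|_{L^2(\Om)}=1,\qquad \|\mathfrak{G}^{\nub}_{w_{\del_n}} u_n\|_{L^2(\R^d;\R^d)}\to 0\quad\text{as } n\to \infty.
\]

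Next, since $\sup_n\|u_n\|_{L^2(\Om)}<\infty$ and $\sup_n\|\mathfrak{G}^{\nub}_{w_{\del_n}} u_n\|_{L^2(\R^d;\R^d)}<\infty$, I would apply \Cref{thm:cpt_full} (with bound $B$ that can be taken to tend to $0$, but certainly finite) to obtain a subsequence, still denoted $\{u_n\}$, and a function $u$ such that $u_n\to u$ in $L^2(\Om)$. Because $\Om$ is a bounded Lipschitz domain and each $u_n\in\cS^{\nub}_{w_{\del_n}}(\Om)$, the second assertion of \Cref{thm:cpt_full} ensures that $u\in H^1_0(\Om)$ with
\[
\|\nabla u\|_{L^2(\Om;\R^d)}\le \liminf_{n\to \infty}\|\mathfrak{G}^{\nub}_{w_{\del_n}} u_n\|_{L^2(\R^d;\R^d)}=0.
\]
Hence $\nabla u=\bm 0$ a.e. in $\Om$, and combined with $u\in H_0^1(\Om)$ and the connectedness/Poincar\'e inequality on the bounded Lipschitz domain $\Om$, this forces $u\equiv 0$ in $\Om$. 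On the other hand, strong convergence in $L^2(\Om)$ preserves the norm, so $\|u\|_{L^2(\Om)}=\lim_n \|u_n\|_{L^2(\Om)}=1$, a contradiction.

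The only subtle point, and the one I would check carefully, is that \Cref{thm:cpt_full} really does deliver a limit $u\in H_0^1(\Om)$ with $\|\nabla u\|_{L^2(\Om;\R^d)}$ controlled by $\liminf_n \|\mathfrak{G}^{\nub}_{w_{\del_n}} u_n\|_{L^2(\R^d;\R^d)}$; the theorem as stated gives the bound by $B$, but its proof (via the distributional identity $\int \tilde u\,\divv\phib=\lim_n\int u_n\mathcal{D}^{-\nub}_{w_{\del_n}}\phib$ together with \Cref{prop:locallim_ptws_lp}) actually yields the sharper lower semicontinuity claim used above, so no additional work is required. With that in hand, the contradiction argument closes and the uniform Poincar\'e inequality follows.
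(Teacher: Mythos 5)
Your proof is correct, and it follows the same basic route the paper indicates (contradiction plus the compactness result \Cref{thm:cpt_full}, exactly as the paper says to mimic the argument of \Cref{thm:unifPoincare_nonint_kernel}). The one place where you diverge from the paper's template, and where you correctly flag a subtlety, is in how the contradiction hypothesis is set up. You iterate with $C=k\to\infty$, which produces a sequence with $\|\mathfrak{G}^{\nub}_{w_{\del_n}}u_n\|_{L^2}\to 0$ but with $B=\sup_n\|\mathfrak{G}^{\nub}_{w_{\del_n}}u_n\|_{L^2}$ merely finite, so the bound $\|\nabla u\|\le B$ from the theorem statement does not immediately force $\nabla u=0$; you then repair this by reopening the proof of \Cref{thm:cpt_full} to extract the lower-semicontinuity estimate $\|\nabla u\|\le\liminf_n\|\mathfrak{G}^{\nub}_{w_{\del_n}}u_n\|_{L^2}$, which is indeed available there. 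The paper's own template for \Cref{thm:unifPoincare_nonint_kernel} instead fixes a threshold constant $C$ once (greater than the sharp local Poincar\'e constant $A$ for $H^1_0(\Omega)$) and extracts a sequence with the \emph{uniform} bound $\|\mathfrak{G}^{\nub}_{w_{\del_{n_k}}}u_{n_k}\|_{L^2}<1/C$; then $B\le 1/C$ directly, \Cref{thm:cpt_full} as stated gives $\|\nabla u\|\le 1/C<1/A$ while $\|u\|_{L^2(\Omega)}=1$, and the contradiction comes from $A$ being sharp, with no need to look inside the compactness proof. Both arguments work; the paper's fixed-$C$ variant is slightly cleaner because it uses \Cref{thm:cpt_full} purely as a black box (and in fact yields the stronger statement that the Poincar\'e constants converge to the optimal local one), whereas yours needs the extra observation about lower semicontinuity — which, to your credit, you identify and justify correctly. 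One small cosmetic point: connectedness of $\Omega$ is not needed to conclude $u\equiv 0$ from $\nabla u=0$ and $u\in H^1_0(\Omega)$; the Friedrichs inequality holds for $H^1_0$ on any bounded open set.
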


\section{Application I: Convergence of parameterized problems and asymptotically compatible schemes}\label{sec:conv_and_AC}
\subsection{Parameterized problems}
In this section, we apply the two compactness results proved in the previous section to study the convergence properties of parameterized problems and the asymptotic compatibility of some numerical schemes for solving these problems. 
Let \(\Om\) be a bounded Lipschitz domain in \(\R^d\) and the function \(A\in L^\infty(\R^d;\R^{d\times d})\) be a symmetric matrix such that there exists a constant \(\mu>0\) such that 
\[\xib^TA(\xb)\xib\ge \mu |\xib|^2,\quad\forall\xb\in\R^d,\ \xib\in \R^d.\]
We would like to investigate the parameterized nonlocal equations
\begin{equation}\label{eq:conv_diff_non_int}
    \left\{\begin{split}
        -\mathfrak{D}^{-\nub}_{w_n}(A\mathfrak{G}^{\nub}_{w_n} u)&=f\ \text{in}\ \Om,\\
        u&=0\ \text{in}\ \R^d\backslash\Om, 
    \end{split}\right.
\end{equation}
where the sequence of kernels $\{w_n\}$ will have some specific properties. From the ellipticity of the coefficient $A$, we see that the energy space for the problem is $\cS^{\nub}_{w_n}(\Om)$. 
The weak form of \eqref{eq:conv_diff_non_int} can be expressed as follows.  Given \(f\in L^2(\Om)\), find \(u\in \cS^{\nub}_{w_n}(\Om)\) such that 
\begin{equation}\label{eq:conv_diff_non_int-weak}
    B_{w_n}(u,v)=(f,v)_{L^2(\Om)},\quad\forall v\in \cS^{\nub}_{w_n}(\Om)
\end{equation}
where the bilinear form 
\(B_{w_n}:\cS^{\nub}_{w_n}(\Om)\times \cS^{\nub}_{w_n}(\Om)\to\R\) is given by
\begin{equation} \label{eq:bilinear_form_w}
    B_{w_{n}}(u,v):=\int_{\R^d}A(\xb)\mathfrak{G}^{\nub}_{w_{n}} u(\xb)\cdot\mathfrak{G}^{\nub}_{w_{n}} v(\xb)d\xb. 
\end{equation}

We analyze these parameterized nonlocal equation for two classes of sequence of kernels. 

{\bf Class I.}   For a kernel \(w(\xb)\) satisfying \Cref{assu:nonint_kernel}, we consider a sequence of radial kernels $\{w_{n}\}$ where 
\[
0\le w_n(\zb)\nearrow w(\zb) \quad \text{as}\quad n\to\infty\quad \text{for a.e.}\ \zb\in\R^d.  
\]
These are exactly the kernels considered in \Cref{thm:cpt_seq_nonint_kernel}.
We also notice that using Lax-Milgram theorem, corresponding to $f\in L^{2}(\Omega)$ and for each $n$, there is a unique solution $u_n\in \cS^{\nub}_{w_n}(\Om)$ to  \eqref{eq:conv_diff_non_int-weak}, see \cite{han2023nonlocal}, with the stability estimate 
\[
|u_n|_{\cS^{\nub}_{w_n}(\Om)} \leq C \|f\|_{L^{2}(\Omega)}
\]
for some $C >0$ (independent of $n$) where we have used the uniform Poincar\'e inequality  \Cref{thm:unifPoincare_nonint_kernel}.  The result we are going to state shortly asserts that $u_n \to u$ strongly in $L^2(\Omega)$ where $u\in \cS^{\nub}_{w}(\Om)$ solves 
\begin{equation}\label{nonint-equation}
 B_{w}(u,v)=(f,v)_{L^2(\Om)},\quad\forall v\in \cS^{\nub}_{w}(\Om).
\end{equation}
Again a unique solution $u$ to \eqref{nonint-equation} exists and, generally, it is expected to be more regular than $u_n$ since $w$ is nonintegrable and $\cS^{\nub}_{w}(\Om)$ is compactly contained in $L^{2}(\Omega)$.
We will also establish connection with discrete approximations of \eqref{eq:conv_diff_non_int-weak} with the solution $u$ of \eqref{nonint-equation}. To that end,  consider the Galerkin approximation of   \eqref{eq:conv_diff_non_int-weak} where we look for  \(u_{n,h}\in V_{n,h}\) such that 
\begin{equation}\label{eq:nonconforming_DG}
    B_{w_n}(u_{n,h},v)= (f,v)_{L^2(\Om)},\quad\forall v\in V_{n,h}.
\end{equation}
where the finite element spaces \(\{V_{n,h}\}_h\subset \cS^{\nub}_{w_n}(\Om)\) are the conforming spaces of piecewise polynomials, i.e, 
\[V_{n,h}:=\{v\in \cS^{\nub}_{w_n}(\Om):v|_T\in P(T),\ \forall T\in \cT_h\},\]
where \(P(T)=\cP_k(T)\) is the space of polynomials on \(T\in \cT_h\) with degree less or equal than a given nonnegative integer \(k\), and \(\cT_h\) is a quasi-uniform triangulation on \(\Om\).

The theorem we state below says that the discrete solutions $u_{n, h} \to u$ in $L^{2}(\Omega)$ and as such the discretized problem \eqref{eq:nonconforming_DG} can be viewed as an approximation to the nonlocal problem \eqref{nonint-equation}. Moreover, since $\mathcal{S}^{\nub}_{w}(\Omega)$ is compactly contained in $L^{2}(\Omega),$ the finite element space $V_{n,h}$ which is conforming in the rough space $\cS^{\nub}_{w_n}(\Om)$, may not be conforming in the regular space $\mathcal{S}^{\nub}_{w}(\Omega)$ for a sufficiently singular kernel $w$. We then conclude that the Galerkin approximation scheme \eqref{eq:nonconforming_DG} can be considered as a (possibly discontinuous) Galerkin (DG) scheme for \eqref{nonint-equation}. 
\begin{theorem}\label{thm:convergence_non_int}
    Assume the degree of polynomials \(k\ge 1\). Let \(\{u_n\}_n\), \(u\) and \(\{u_{n,h}\}_{n,h}\) be the solutions to problems \eqref{eq:conv_diff_non_int-weak}, \eqref{nonint-equation} and \eqref{eq:nonconforming_DG} respectively. Then there exists a constant $C>0$ such that for each \(n\in \N\),
    \begin{equation*}
        \|u_n-u_{n,h}\|_{\cS^{\nub}_{w_n}(\Om)}\le C\inf_{v_{n,h}\in V_{n,h}}\|u_n-v_{n,h}\|_{\cS^{\nub}_{w_n}(\Om)}\to 0,\quad h\to 0,
    \end{equation*}
    and, moreover,
    \begin{equation*}%\label{eq:loc_lim}
        \|u_n-u\|_{L^2(\Om)}\to 0,\quad n\to \infty,
    \end{equation*}
    and 
    \begin{equation*}%\label{eq:AC_lim}
        \|u_{n,h}-u\|_{L^2(\Om)}\to 0,\quad n\to \infty,\ h\to 0.
    \end{equation*}
\end{theorem}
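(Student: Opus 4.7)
The plan is to treat the three conclusions in sequence, leveraging the uniform Poincar\'e inequality \Cref{thm:unifPoincare_nonint_kernel}, the compactness result \Cref{thm:cpt_seq_nonint_kernel}, and the continuity estimates \eqref{estgradonwholespace}--\eqref{estdivonwholespace} throughout.

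\emph{Galerkin estimate for fixed $n$.} For each $n$ large the bilinear form $B_{w_n}$ is, by ellipticity of $A$ combined with \Cref{thm:unifPoincare_nonint_kernel}, coercive on $\mathcal{S}^{\nub}_{w_n}(\Omega)$ with a constant $C_1 = \mu/(1+C(N_0)^2)$ independent of $n$, and continuous with constant $C_2 = \|A\|_{L^\infty}$. The standard C\'ea lemma then yields the quasi-optimality estimate with ratio $C_2/C_1$. To see that the best approximation error tends to zero as $h\to 0$, I would use density of $C^\infty_c(\Omega)$ in $\mathcal{S}^{\nub}_{w_n}(\Omega)$ established in \cite{han2023nonlocal}: approximate $u_n$ by $\tilde u \in C^\infty_c(\Omega)$ within $\epsilon$ and then approximate $\tilde u$ by its nodal Lagrange interpolant $I_h\tilde u \in V_{n,h}$ (admissible since $k\ge 1$), whose $\mathcal{S}^{\nub}_{w_n}$-error is dominated by its $H^1$-error via \eqref{estgradonwholespace}.

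\emph{Convergence $u_n \to u$.} Testing \eqref{eq:conv_diff_non_int-weak} with $u_n$ and applying coercivity and the uniform Poincar\'e inequality yields a uniform bound $\|u_n\|_{L^2(\Omega)} + |u_n|_{\mathcal{S}^{\nub}_{w_n}} \le C\|f\|_{L^2(\Omega)}$. \Cref{thm:cpt_seq_nonint_kernel} then extracts a subsequence $u_{n_k}\to u^\ast$ in $L^2(\Omega)$ with $u^\ast \in \mathcal{S}^{\nub}_w(\Omega)$. To identify $u^\ast = u$, I fix $v\in C^\infty_c(\Omega)$ and pass to the limit in $B_{w_{n_k}}(u_{n_k}, v) = (f, v)$. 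The clean step is the strong convergence $\mathfrak{G}^{\nub}_{w_n} v \to \mathfrak{G}^{\nub}_w v$ in $L^2(\R^d;\R^d)$: since $w - w_n \ge 0$ with $M^1_{w - w_n}, M^2_{w-w_n}\to 0$ by monotone convergence, estimate \eqref{estgradonwholespace} applied to the difference kernel $w - w_n$ gives this at once. The subtle step is to identify the weak $L^2$ limit $\gb$ of the bounded sequence $\mathfrak{G}^{\nub}_{w_{n_k}} u_{n_k}$ as $\mathfrak{G}^{\nub}_w u^\ast$: pairing against arbitrary $\bm\phi \in C^\infty_c(\R^d;\R^d)$, using the distributional definition \eqref{eq:distributional_gradient}, the strong $L^2$ convergence of $u_{n_k}$ to $u^\ast$, and the analogous strong convergence $\mathcal{D}^{-\nub}_{w_n}\bm\phi \to \mathcal{D}^{-\nub}_w \bm\phi$ (from \eqref{estdivonwholespace} applied to $w-w_n$) completes the identification. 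Multiplying weak-strong and passing to the limit yields $B_w(u^\ast, v) = (f, v)$, and uniqueness of the solution to \eqref{nonint-equation} gives $u^\ast = u$. Since every cluster point equals $u$, the full sequence converges in $L^2(\Omega)$.

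\emph{Double limit for $u_{n,h}$.} Repeating the coercivity-plus-Poincar\'e argument on \eqref{eq:nonconforming_DG} gives the uniform bound $\|u_{n,h}\|_{L^2(\Omega)} + |u_{n,h}|_{\mathcal{S}^{\nub}_{w_n}} \le C\|f\|_{L^2(\Omega)}$. Along any sequence $(n_k, h_k)$ with $n_k\to\infty$ and $h_k\to 0$, \Cref{thm:cpt_seq_nonint_kernel} yields an $L^2$-subsequential limit $u^\star \in \mathcal{S}^{\nub}_w(\Omega)$. For $v \in C^\infty_c(\Omega)$ the nodal interpolant $v_k := I_{h_k} v \in V_{n_k, h_k}$ is an admissible test function and satisfies $v_k \to v$ in $H^1_0(\Omega)$. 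Splitting
\[
\|\mathfrak{G}^{\nub}_{w_{n_k}} v_k - \mathfrak{G}^{\nub}_w v\|_{L^2} \le \|\mathfrak{G}^{\nub}_{w_{n_k}}(v_k - v)\|_{L^2} + \|\mathfrak{G}^{\nub}_{w_{n_k}} v - \mathfrak{G}^{\nub}_w v\|_{L^2},
\]
the first term is controlled by \eqref{estgradonwholespace} using the uniform bounds $M^i_{w_{n_k}}\le M^i_w$ and the $H^1$-convergence $v_k\to v$, while the second vanishes exactly as in the previous step. Combining this with the weak-$L^2$ convergence of $\mathfrak{G}^{\nub}_{w_{n_k}} u_{n_k, h_k}$ to $\mathfrak{G}^{\nub}_w u^\star$ (identified by the same duality argument) and passing to the limit in $B_{w_{n_k}}(u_{n_k, h_k}, v_k) = (f, v_k)$ yields $B_w(u^\star, v) = (f, v)$, hence $u^\star = u$ and the double limit follows.

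The principal obstacle in both of the last two parts is identifying the weak $L^2$ limit of the nonlocal gradients $\mathfrak{G}^{\nub}_{w_n} u_n$ (respectively $\mathfrak{G}^{\nub}_{w_n} u_{n,h}$) with the distributional gradient $\mathfrak{G}^{\nub}_w$ of the strong $L^2$ limit. Once this is handled via duality and the strong convergence $\mathcal{D}^{-\nub}_{w_n}\bm\phi\to \mathcal{D}^{-\nub}_w\bm\phi$ delivered by \eqref{estdivonwholessspace}, the remaining ingredients---C\'ea's lemma, the compactness \Cref{thm:cpt_seq_nonint_kernel}, and uniqueness of solutions to \eqref{nonint-equation}---fit together routinely.
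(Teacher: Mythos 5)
Your proof is correct and rests on the same ingredients as the paper's: coercivity via the uniform Poincar\'e inequality \Cref{thm:unifPoincare_nonint_kernel}, C\'ea's lemma and density for fixed $n$, the sequential compactness \Cref{thm:cpt_seq_nonint_kernel}, strong convergence of $\cG^{\nub}_{w_n}\phi$ and $\cD^{-\nub}_{w_n}\phib$ on smooth test functions (applying \eqref{estgradonwholespace}--\eqref{estdivonwholespace} to the difference kernel $w-w_n$), duality to identify the weak $L^2$ limit of $\mathfrak{G}^{\nub}_{w_n}u_n$, and the weak-strong pairing to pass to the limit in the bilinear form. The organizational difference is that the paper does not argue the three conclusions directly: it verifies assumptions (A)--(D) of the abstract asymptotic-compatibility framework \Cref{ass:generalAC} and then invokes \Cref{thm:main_convergence}, which packages exactly the C\'ea--compactness--density--weak-strong chain you carry out by hand. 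Your version is more self-contained; the paper's modularization makes the same verification reusable for \Cref{thm:convergence_delta}. One small point worth tightening: to bound $\|\cG^{\nub}_{w_n}(v_k-v)\|_{L^2}$ for $v_k-v\in H^1_0(\Omega)$ (only piecewise smooth, not $C^1_c$), \eqref{estgradonwholespace} as stated applies to $C^1_c$ functions; the paper sidesteps this by proving the $H^1$-domination $\|u\|_{\cS^{\nub}_{w_n}}\le M_2\|u\|_{H^1}$ directly through the Fourier symbol bound \eqref{Fsym0thorderest}, which is the cleaner route and worth adopting in place of the pointwise estimate.
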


{\bf Class II.} We next consider a family of kernels \(\{w_{\del_n}\}_{n}\) given in  \Cref{ass:kernel4cptthm} where $\del_n\to 0$ as $n\to \infty$. 

As before, for each $n$, we may apply Lax-Milgram theorem to show that a unique solution $u_{\del_n} \in\cS^{\nub}_{w_{\del_n}}(\Om)$ exists corresponding to $f\in L^{2}(\Omega)$ to  \eqref{eq:conv_diff_non_int-weak} for kernels of this class. There will also be a positive constant $C>0$, independent of $n$, where for each $n$,
\[
|u_{\del_n}|_{\cS^{\nub}_{w_{\del_n}}(\Om)} \leq C \|f\|_{L^{2}(\Omega)}.  
\]
For the above estimate we have used the uniform Poincar\'e inequality \Cref{thm:unifPoincare}. We may now use the compactness result \Cref{thm:cpt_full} to deduce the existence of $u_0\in H^{1}_{0}(\Om)$ such that (up to a subsequence) $u_{\del_n}\to u_0$ in $L^{2}(\Omega)$ as $n\to \infty$. As we will state in the next theorem, it turns out that  \(u_0\in H^1_0(\Om)\) solves 
\begin{equation}\label{eq:loc_conv_diff}
    B_0(u_0,v)=(f,v)_{L^2(\Om)},\quad\forall v\in H^1_0(\Om),
\end{equation}
where \(B_0(u,v):=\int_{\Om} A(\xb)\nabla u(\xb)\cdot \nabla v(\xb)d\xb\) for any \(u,v\in H^1_0(\Om)\). Moreover, considering a  conforming finite element spaces \(\{V_{{\del_n},h}\}_h\subset \cS^{\nub}_{w_{\del_n}}(\Om)\) which are the spaces of piecewise polynomials of degree less or equal than \(k\), i.e, 
\[V_{{\del_n},h}:=\{v\in \cS^{\nub}_{w_{\del_n}}(\Om):v|_K\in \cP_k(K),\ \forall K\in \cT_h\},\]
and the Galerkin approximation  \(u_{{\del_n},h}\in V_{\del,h}\) such that 
\begin{equation}\label{eq:Galerkin_conv_diff_delta}
    B_{\del_{n}}(u_{{\del_n},h},v)= (f,v)_{L^2(\Om)},\quad\forall v\in V_{{\del_n},h},
\end{equation}
the theorem asserts that $u_{n, h} \to u_0$ as $n\to \infty$ and $h\to 0$. The following theorem states these results precisely. 

\begin{theorem}\label{thm:convergence_delta}
Assume that $\{w_{\del_n}\}_{n}$ is given in  \Cref{ass:kernel4cptthm} where $\del_n\to 0$ as $n\to \infty$. 
    Assume also that the degree of polynomials \(k\ge 1\). Let \(\{u_{\del_n}\}_n\), \(u_0\) and \(\{u_{{\del_n},h}\}_{n,h}\) be the solutions to problems \eqref{eq:conv_diff_non_int-weak},  \eqref{eq:loc_conv_diff} and \eqref{eq:Galerkin_conv_diff_delta} respectively. Then there exists a constant $C>0$ such that for each \(n\in \N_+\), 
    \begin{equation*}
        \|u_{\del_n}-u_{\del_n,h}\|_{\cS^{\nub}_{w_{\del_n}}(\Om)}\le C\inf_{v_{n,h}\in V_{\del_n,h}}\|u_{\del_n}-v_{n,h}\|_{\cS^{\nub}_{w_{\del_n}}(\Om)}\to 0,\quad h\to 0,
    \end{equation*}
    and, moreover,
    \begin{equation*}%\label{eq:loc_lim}
        \|u_{\del_n}-u_0\|_{L^2(\Om)}\to 0,\quad n\to \infty,
    \end{equation*}
    and 
    \begin{equation*}%\label{eq:AC_lim}
        \|u_{\del_n,h}-u_0\|_{L^2(\Om)}\to 0,\quad n\to \infty,\ h\to 0.
    \end{equation*}
\end{theorem}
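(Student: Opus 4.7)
The plan is to prove the three estimates in order using Lax--Milgram, the uniform Poincar\'e inequality \Cref{thm:unifPoincare}, the compactness result \Cref{thm:cpt_full}, and the localization \Cref{prop:locallim_ptws_lp}. For the quasi-optimality bound, fix $n$: the bilinear form $B_{\delta_n}$ is continuous on $\cS^{\nub}_{w_{\delta_n}}(\Om)\times \cS^{\nub}_{w_{\delta_n}}(\Om)$ with constant $C_2=\|A\|_{L^\infty}$ and coercive with constant $C_1$ obtained from $\mu$ together with \Cref{thm:unifPoincare}, so C\'ea's lemma yields the desired inequality. The infimum vanishes as $h\to 0$ because $C^\infty_c(\Om)$ is dense in $\cS^{\nub}_{w_{\delta_n}}(\Om)$ (shown in \cite{han2023nonlocal} for Lipschitz $\Om$) and Lagrange interpolation approximates smooth functions in $H^1$-norm, hence in $\cS^{\nub}_{w_{\delta_n}}$-norm via the operator bound \eqref{estgradonwholespace} extended from $C^1_c$ to $H^1$ by density.

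For $u_{\delta_n}\to u_0$ in $L^2(\Om)$, the uniform Poincar\'e inequality gives $\|u_{\delta_n}\|_{\cS^{\nub}_{w_{\delta_n}}}\le C\|f\|_{L^2}$ independent of $n$, so \Cref{thm:cpt_full} extracts a subsequence with $u_{\delta_n}\to u_0$ in $L^2$ and $u_0\in H^1_0(\Om)$. To identify $u_0$ as the local solution, I first show that $\mathfrak{G}^{\nub}_{w_{\delta_n}} u_{\delta_n}\rightharpoonup \nabla u_0$ weakly in $L^2(\R^d;\R^d)$ (with $u_0$ extended by zero): for $\phib\in C^\infty_c(\R^d;\R^d)$, the definition \eqref{eq:distributional_gradient} gives
\[
\int_{\R^d} \mathfrak{G}^{\nub}_{w_{\delta_n}} u_{\delta_n}\cdot\phib\, d\xb = -\int_{\R^d} u_{\delta_n}\,\cD^{-\nub}_{w_{\delta_n}}\phib\, d\xb,
\]
and the right-hand side passes to $-\int u_0\,\divv\phib = \int \nabla u_0\cdot\phib$ by the strong $L^2(\R^d)$ convergences $u_{\delta_n}\to u_0$ and $\cD^{-\nub}_{w_{\delta_n}}\phib\to\divv\phib$ (the latter from \Cref{prop:locallim_ptws_lp}). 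Then test the weak equation against a fixed $v\in C^\infty_c(\Om)\subset \cS^{\nub}_{w_{\delta_n}}(\Om)$; the left-hand side converges to $\int_\Om A\nabla u_0\cdot\nabla v\, d\xb$ by weak--strong pairing of $\mathfrak{G}^{\nub}_{w_{\delta_n}} u_{\delta_n}$ with $A\,\mathfrak{G}^{\nub}_{w_{\delta_n}} v\to A\nabla v$ strongly in $L^2$ (using $A\in L^\infty$ and \Cref{prop:locallim_ptws_lp}). Density in $H^1_0(\Om)$ then yields \eqref{eq:loc_conv_diff}, and uniqueness upgrades subsequential to full-sequence convergence.

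For the joint convergence $u_{\delta_n,h}\to u_0$, coercivity together with Cauchy--Schwarz on the right-hand side gives $\|u_{\delta_n,h}\|_{\cS^{\nub}_{w_{\delta_n}}}\le C\|f\|_{L^2}$ uniformly in $n$ and $h$, so \Cref{thm:cpt_full} extracts a joint subsequence $u_{\delta_n,h}\to \tilde u_0\in H^1_0(\Om)$ in $L^2$, with $\mathfrak{G}^{\nub}_{w_{\delta_n}} u_{\delta_n,h}\rightharpoonup \nabla\tilde u_0$ weakly in $L^2$ by the same integration-by-parts argument used above. To show $\tilde u_0 = u_0$, fix $\phi\in C^\infty_c(\Om)$ and let $I_h\phi\in V_{\delta_n,h}$ be its Lagrange interpolant (possible since $k\ge 1$); testing the discrete equation gives $B_{\delta_n}(u_{\delta_n,h},I_h\phi)=(f,I_h\phi)$, whose right-hand side converges to $(f,\phi)$ by standard interpolation. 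For the left-hand side, split $\mathfrak{G}^{\nub}_{w_{\delta_n}} I_h\phi = \mathfrak{G}^{\nub}_{w_{\delta_n}}(I_h\phi-\phi)+\mathfrak{G}^{\nub}_{w_{\delta_n}}\phi$; the second term tends strongly to $\nabla\phi$ by \Cref{prop:locallim_ptws_lp}, while the first is controlled via \eqref{estgradonwholespace} (extended to $H^1$ by density) by $C(M^1_{w_{\delta_n}}\|\nabla(I_h\phi-\phi)\|_{L^2} + M^2_{w_{\delta_n}}\|I_h\phi-\phi\|_{L^2})$, which vanishes as $h\to 0$ because the moments are uniformly bounded by \Cref{ass:kernel4cptthm} and standard FEM interpolation estimates apply. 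Weak--strong pairing and uniqueness of the local problem then identify $\tilde u_0$ as $u_0$.

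The main obstacle lies in the last step: ensuring the strong convergence $\mathfrak{G}^{\nub}_{w_{\delta_n}} I_h\phi\to\nabla\phi$ in $L^2$ along the joint passage $n\to\infty$, $h\to 0$, where the test space itself is $n$- and $h$-dependent. This double passage requires uniform boundedness of the kernel moments $M^1_{w_{\delta_n}}$ and $M^2_{w_{\delta_n}}$ (guaranteed by \eqref{eq:kernelassumption_2}, \eqref{eq:kernelassumption_3} in \Cref{ass:kernel4cptthm}) and the density extension of the $C^1_c$ estimate \eqref{estgradonwholespace} to $H^1$ so that it applies to piecewise polynomial interpolants; without this uniformity one cannot combine the vanishing interpolation error with the vanishing nonlocality.
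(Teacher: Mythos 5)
Your proof is correct and reaches the same conclusions, but it takes a structurally different route from the paper: you prove the three convergences directly, whereas the paper invokes the abstract asymptotic-compatibility machinery by verifying conditions (A)--(D) of \Cref{ass:generalAC} and then citing \Cref{thm:main_convergence}. In substance the two arguments rest on the same ingredients --- \Cref{thm:unifPoincare} for the uniform a priori bound, \Cref{thm:cpt_full} for $L^2$-precompactness, \Cref{prop:locallim_ptws_lp} plus the integration-by-parts formula \eqref{eq:distributional_gradient} to pass $\mathfrak{G}^{\nub}_{w_{\del_n}} u_{\del_n}\rightharpoonup \nabla u_0$ weakly against strongly convergent test-function gradients, and the density $\cS_*=C^\infty_c(\Om)$ together with interpolation for the joint $(n,h)$ passage. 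What you do differently is unroll the abstract theorem's final triangle-inequality estimate into a concrete interpolation argument with $I_h\phi$ and the decomposition $\mathfrak{G}^{\nub}_{w_{\del_n}} I_h\phi = \mathfrak{G}^{\nub}_{w_{\del_n}}(I_h\phi-\phi)+\mathfrak{G}^{\nub}_{w_{\del_n}}\phi$, controlled by the uniform moment bounds $M^1_{w_{\del_n}}\to 2d$, $M^2_{w_{\del_n}}\to 0$; this is essentially the paper's condition (Dii) verified in place. The trade-off is that your version is self-contained and more elementary, while the paper's framework applies unchanged to the nonlocal-to-nonlocal setting (\Cref{thm:convergence_non_int}) so it avoids repeating the argument. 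One small point you leave implicit: after establishing $\langle \mathfrak{G}^{\nub}_{w_{\del_n}} u_{\del_n},\phib\rangle\to\langle\nabla u_0,\phib\rangle$ for smooth $\phib$, you need the uniform bound $\sup_n\|\mathfrak{G}^{\nub}_{w_{\del_n}} u_{\del_n}\|_{L^2}<\infty$ (which you do have from the uniform Poincar\'e estimate) to upgrade convergence on a dense set to genuine weak $L^2$ convergence; it would be worth stating this explicitly.
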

We defer the proofs of \Cref{thm:convergence_non_int} and \Cref{thm:convergence_delta} to the next subsection but note that these two theorems are particular instances of a more abstract result obtained in \cite{tian2014asymptotically, tian2015nonconforming} on the convergence of asymptotically compatible (AC) schemes for parameterized linear problems. A slightly modified version of the abstract formulation is presented next. 
\subsection{Asymptotically compatible schemes for linear equations with bounded and measurable coefficients}

Let \(\{\cS_{\sig}:\sig\in \Sig\}\) be a family of Hilbert spaces over \(\R\), where the index set \(\Sig\) is a subset of \(\R\) such that $0\in\Sig$ and \(\sup\Sig=\infty\). Here we identify the dual space of $\cS_0$ with itself, $\cS_0^*=\cS_0$. (In applications we will take $\cS_0$ as the space of $L^2$ functions.) Let \(\cS_{\infty}\) and \(\cH\) be two Hilbert spaces over \(\R\) which are continuously embedded into \(\cS_0\). Associated with the spaces \(\{\cS_\sig:\sig\in\Sig\cup \{\infty\}\}\), there are bilinear forms \(\{B_\sig:\cS_\sig\times \cS_\sig\to\R\}\) respectively. 

Given $f\in\cS_0$, we consider the parameterized variational problems: for each \(\sig\in \Sig\), find \(u_\sig\in \cS_\sig\) such that 
\begin{equation}\label{eq:nonlocal}
    B_\sig(u_\sig,v)=(f,v)_{\cS_0},\quad\forall v\in \cS_\sig,
\end{equation}
and the limiting problem: find \(u_\infty\in \cS_\infty\) such that
\begin{equation}\label{eq:local}
    B_\infty(u_\infty,v)=(f,v)_{\cS_0},\quad\forall v\in \cS_\infty.
\end{equation}
Let \(\{V_{\sig,h}\subset \cS_{\sig}:\sig\in\Sig, h\in (0,h_0)\}\) be a family of finite-dimensional Hilbert spaces and consider  the Galerkin approximation of \eqref{eq:nonlocal}: for each \(\sig\in \Sig\) and \(h\in (0,h_0)\), find \(u_{\sig,h}\in V_{\sig,h}\) such that
\begin{equation}\label{eq:Galerkin}
    B_\sig(u_{\sig,h},v)=(f,v)_{\cS_0},\quad\forall v\in V_{\sig,h}.
\end{equation}
The main question we would like to address is whether $u_{\sigma, h} \to u_\infty$ in $\cS_0$ as $(\sigma, h) \to(\infty, 0)$. As has been discussed extensively in \cite{tian2014asymptotically, tian2015nonconforming}, in general the answer is negative. However, under addition assumptions, convergence is shown to hold.

\begin{assumption}\label{ass:generalAC}
    Assume that there exist four positive constants \(M_1\), \(M_2\), \(C_1\) and \(C_2\) independent of \(\sig\) such that the following conditions hold:
\begin{enumerate}[label=(\Alph*)]
    \item Uniform embeddings and asymptotically compact embeddings:
    \begin{enumerate}[label=(\Alph{enumi}\roman*)]
        \item For any \(\sig\in \Sig\), $\cH\subset \cS_\sig$, \(M_1\|u\|_{\cS_0}\le \|u\|_{\cS_\sig}\) for any \(u\in\cS_\sig\),   and \(\|u\|_{\cS_\sig}\le M_2\|u\|_{\cH}\) for any \(u\in\cH\).
        \item For any set \(\{u_\sig:\sig\in \Sig\}\), if \(\sup\{\|u_\sig\|_{\cS_\sig}:\sig\in \Sig\}\) is finite, then \(\{u_\sig:\sig\in \Sig\}\) is relatively compact in \(\cS_0\) with each limit point in \(\cS_\infty\).
    \end{enumerate}
    \item Boundedness and coercivity of the bilinear forms \(\{B_\sig:\sig\in\Sig\cup\{\infty\}\}\):
    \begin{enumerate}[label=(\Alph{enumi}\roman*)]
        \item \(|B_\sig(u,v)|\le C_2\|u\|_{\cS_\sig}\|v\|_{\cS_\sig}\), \(\forall u,v\in\cS_\sig\), \(\forall\sig\in\Sig\cup\{\infty\}\).
        \item \(B_\sig(u,u)\ge C_1\|u\|_{\cS_\sig}^2\), \(\forall u\in \cS_\sig\), \(\forall\sig\in \Sig\cup\{\infty\}\).
    \end{enumerate}
    \item Existence of a subset \(\cS_*\) of \(\cH\) such that
    \begin{enumerate}[label=(\Alph{enumi}\roman*)]
        \item \(\cS_*\subset \cap_{\sig\in\Sig\cup\{\infty\}}\cS_\sig\) and \(\cS_*\) is dense in \(\cS_\sig\) for each \(\sig\in\Sig\cup\{\infty\}\) with respect to their norms,
        \item for any sequence \(\{u_{\sig_k}\in \cS_{\sig_k}\}_{k\in\N_+}^{\Sig\ni \sig_k\to\infty}\) and \(u\in \cS_\infty\) satisfying 
        \[\sup_{k\in\N_+}\|u_{\sig_k}\|_{\cS_{\sig_k}}<\infty\]
        and \(u_{\sig_k}\to u\) in \(\cS_0\) as \(k\to\infty\) one has 
        \[B_{\sig_k}(u_{\sig_k},\phi)\to B_\infty(u,\phi),\quad k\to\infty\] 
        for any \(\phi\in S_*\).
    \end{enumerate}
    \item Approximation of nonlocal space and asymptotic density in limiting space:
    \begin{enumerate}[label=(\Alph{enumi}\roman*)]
        \item For given \(\sig\in \Sig\) and \(v\in \cS_\sig\), \(\inf_{v_{\sig,h}\in V_{\sig,h}}\|v-v_{\sig,h}\|_{V_{\sig,h}}\to 0\) as \(h\to 0\).
        \item For any \(v\in \cS_*\), there exists \(\{v_k\in V_{\sig_k,h_k}\}^{\sig_k\to\infty}_{h_k\to 0}\) such that  \(\|v-v_k\|_{\cH}\to 0\) as \(k\to\infty\).
    \end{enumerate}
\end{enumerate}
\end{assumption}

It should be noted that assumptions (A) and (C) in the above relax the conditions specified in the original paper \cite{tian2014asymptotically}. Specifically, assumption (A) eliminates the necessity for the continuous embedding of 
$\mathcal{S}_\infty$ into  $\mathcal{S}_\sigma$ and assumption (C) substitutes the requirement for strong convergence of operators on a dense subset with the requirement for the convergence of bilinear forms in a weak sense. We refer the readers to a recent work \cite{du2024asymptotically} that discusses extensions of the original AC framework to study nonlinear problems.

\begin{theorem}[Main Convergence Theorem \cite{tian2014asymptotically}]\label{thm:main_convergence}
    Suppose \Cref{ass:generalAC} holds and let \(\{u_\sig\}\), \(u_\infty\) and \(\{u_{\sig,h}\}\) be the solutions to problems \eqref{eq:nonlocal}, \eqref{eq:local} and \eqref{eq:Galerkin} respectively. Then given properties (A)-(D), we have that for each \(\sig\in \Sig\),
    \begin{equation}
        \|u_\sig-u_{\sig,h}\|_{\cS_\sig}\le \frac{C_2}{C_1}\inf_{v_{\sig,h}\in V_{\sig,h}}\|u_\sig-v_{\sig,h}\|_{\cS_\sig}\to 0,\quad h\to 0,
    \end{equation}
    and, moreover,
    \begin{equation}\label{eq:loc_lim}
        \|u_\sig-u_\infty\|_{\cS_0}\to 0,\quad \sig\to \infty,
    \end{equation}
    and 
    \begin{equation}\label{eq:AC_lim}
        \|u_{\sig,h}-u_\infty\|_{\cS_0}\to 0,\quad\sig\to \infty,\ h\to 0.
    \end{equation}
\end{theorem}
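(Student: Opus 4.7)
The plan is to treat the three statements in order. For the first assertion, the standard Cea-type argument applies directly: by coercivity $B_\sigma(u_\sigma - u_{\sigma,h}, u_\sigma - u_{\sigma,h}) \ge C_1 \|u_\sigma - u_{\sigma,h}\|_{\cS_\sigma}^2$, and Galerkin orthogonality $B_\sigma(u_\sigma - u_{\sigma,h}, v_{\sigma,h}) = 0$ for any $v_{\sigma,h} \in V_{\sigma,h}$, combined with boundedness (B)(i), yields
\[
C_1 \|u_\sigma - u_{\sigma,h}\|_{\cS_\sigma}^2 \le B_\sigma(u_\sigma - u_{\sigma,h}, u_\sigma - v_{\sigma,h}) \le C_2 \|u_\sigma - u_{\sigma,h}\|_{\cS_\sigma}\|u_\sigma - v_{\sigma,h}\|_{\cS_\sigma}
\]
for any $v_{\sigma,h}$, giving the quasi-optimal bound; the convergence to zero as $h\to 0$ is immediate from (D)(i).

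For the second assertion, coercivity (B)(ii) and (A)(i) give $C_1 \|u_\sigma\|_{\cS_\sigma}^2 \le B_\sigma(u_\sigma, u_\sigma) = (f,u_\sigma)_{\cS_0} \le M_1^{-1}\|f\|_{\cS_0}\|u_\sigma\|_{\cS_\sigma}$, so $\{u_\sigma\}_{\sigma\in\Sigma}$ is uniformly bounded in $\cS_\sigma$. By the asymptotic compact embedding (A)(ii), along any sequence $\sigma_k\to\infty$ we may extract a subsequence (not relabeled) with $u_{\sigma_k}\to u^*$ in $\cS_0$ and $u^*\in \cS_\infty$. For any test function $\phi\in\cS_*\subset \cap_\sigma \cS_\sigma$, the equation gives $B_{\sigma_k}(u_{\sigma_k},\phi) = (f,\phi)_{\cS_0}$; by (C)(ii) the left-hand side tends to $B_\infty(u^*,\phi)$. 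Hence $B_\infty(u^*,\phi) = (f,\phi)_{\cS_0}$ for every $\phi\in \cS_*$, and by density (C)(i) this holds for all $\phi\in \cS_\infty$. Uniqueness of solutions to \eqref{eq:local} (from coercivity of $B_\infty$) then forces $u^* = u_\infty$; since every subsequence has a further subsequence converging to the same limit, the whole net satisfies \eqref{eq:loc_lim}.

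For the third assertion, the same coercivity argument applied to the Galerkin equation gives a uniform bound $\sup_{\sigma,h}\|u_{\sigma,h}\|_{\cS_\sigma}<\infty$, so (A)(ii) again produces a subsequence $u_{\sigma_k,h_k}\to u^{**}$ in $\cS_0$ with $u^{**}\in\cS_\infty$ as $\sigma_k\to\infty$ and $h_k\to 0$. To identify the limit, fix $\phi\in\cS_*$ and invoke (D)(ii) to obtain $v_k\in V_{\sigma_k,h_k}$ with $\|v_k-\phi\|_{\cH}\to 0$; via (A)(i) this upgrades to $\|v_k-\phi\|_{\cS_{\sigma_k}}\to 0$ and $\|v_k-\phi\|_{\cS_0}\to 0$. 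Taking $v_k$ as a test function in \eqref{eq:Galerkin},
\[
B_{\sigma_k}(u_{\sigma_k,h_k},\phi) = (f, v_k)_{\cS_0} - B_{\sigma_k}(u_{\sigma_k,h_k}, v_k - \phi),
\]
where the first term tends to $(f,\phi)_{\cS_0}$ and the second is controlled by $C_2 \sup_k\|u_{\sigma_k,h_k}\|_{\cS_{\sigma_k}} \|v_k - \phi\|_{\cS_{\sigma_k}} \to 0$. Meanwhile, (C)(ii) applied to the sequence $\{u_{\sigma_k,h_k}\}$ (which is uniformly bounded in $\cS_{\sigma_k}$ and converges to $u^{**}$ in $\cS_0$) gives $B_{\sigma_k}(u_{\sigma_k,h_k},\phi)\to B_\infty(u^{**},\phi)$. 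Combining, $B_\infty(u^{**},\phi)=(f,\phi)_{\cS_0}$ on $\cS_*$, and density plus uniqueness give $u^{**}=u_\infty$, hence \eqref{eq:AC_lim}.

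The main obstacle is the limit passage in (3): we cannot test \eqref{eq:Galerkin} directly with $\phi\in \cS_*$ since $\phi\notin V_{\sigma_k,h_k}$ in general, and we cannot use (C)(ii) directly on the pair $(u_{\sigma_k,h_k},v_k)$ either because $v_k$ is moving. The trick is therefore to split as above, using (D)(ii) plus the \emph{uniform} bilinear bound (B)(i) to discard the error between $v_k$ and the fixed test function $\phi$, and only then to invoke the weak bilinear convergence (C)(ii) against the fixed $\phi$. This is precisely the point where the relaxation of the original strong-operator hypothesis to the weaker bilinear-form condition (C) must be handled carefully, and where the coupling $\|v_k-\phi\|_\cH\to 0$ with the embedding constant $M_2$ in (A)(i) is essential.
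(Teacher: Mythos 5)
Your proposal is correct and follows essentially the same route as the paper: a Cea-lemma argument for the first bound, then uniform energy bounds plus the asymptotic compactness (A)(ii) to extract $\cS_0$-limits, identification of those limits via testing against $\cS_*$ with (C)(ii) and (C)(i), and for the fully discrete sequence the same splitting that replaces the fixed test function $\phi$ by $v_k\in V_{\sigma_k,h_k}$ to exploit Galerkin orthogonality while isolating the error via (B)(i), (A)(i), and (D)(ii). Your two-term rearrangement of the bilinear form is a slightly tidier presentation than the paper's three-term triangle inequality, but the underlying estimates are identical; you also correctly carry the factor $M_1^{-1}$ in the uniform energy bound, which the paper's displayed constant silently omits.
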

\begin{proof}
    We sketch the proof briefly. The first inequality is the best approximation and the convergence follows from the standard conforming Galerkin approximation theory. 
    To show \eqref{eq:loc_lim}, first note that 
    \[\sup\{\|u_\sig\|_{\cS_\sig}:\sig\in\Sig\}\le \frac{\|f\|_{\cS_0}}{C_1}\]
    by coercivity (Bii) and embedding of \(\cS_\sig\) into \(\cS_0\). Then by asymptotically compact embedding property (Aii) we know there exists \(\tilde{u}\in \cS_0\) such that \(u_\sig\to \tilde{u}\) as \(\sig\to\infty\) up to a subsequence. Then it suffices to show that \(\tilde{u}\) is the unique solution to \eqref{eq:local} to conclude \eqref{eq:loc_lim}. By density of \(\cS_*\) in \(\cS\), it suffices to show \eqref{eq:local} holds (with \(\tilde{u}\) in place of \(u_\infty\)) for any \(v\in\cS_*\). Fix some \(v\in\cS_*\), since \(\cS_*\subset \cS_\sig\) for each \(\sig\in \Sig\), one knows that 
    \[B_\sig(u_\sig,v)=(f,v)_{\cS_0}.\]
    Letting \(\sig\to\infty\) and using (Cii) yield \(B_\infty(\tilde{u},v)=(f,v)_{\cS_0}\) as desired.

    Finally, to show \eqref{eq:AC_lim}, first use the bound \[\sup\{\|u_{\sig,h}\|_{\cS_\sig}:\sig\in\Sig,\ h\in (0,h_0)\}\le \frac{\|f\|_{\cS_0}}{C_1}\]
    and asymptotically compact embedding property to show there exists \(\tilde{u}\in \cS_0\) such that \(u_{\sig,h}\to \tilde{u}\) as \(\sig\to\infty, h\to 0\) up to a subsequence. 
    Then again by density it suffices to show that \(B_\infty(\tilde{u},v)=(f,v)_{\cS_0}\) for any \(v\in \cS_*\). Fix \(v\in\cS_*\), by asymptotic density, there exists \(\{v_k\in V_{\sig_k,h_k}\}^{\sig_k\to\infty}_{h_k\to 0}\) such that \(\|v-v_k\|_{\cH}\to 0\) as \(k\to\infty\). Note that 
    \[B_{\sig_k}(u_{\sig_k,h_k},v_k)=(f,v_k)_{\cS_0}.\]
    Then the final conclusion follows from the computation below:
    \[\begin{split}
        &|B_\infty(\tilde{u},v)-(f,v)_{\cS_0}|\\
        \le &|B_\infty(\tilde{u},v)-B_{\sig_k}(u_{\sig_k,h_k},v)|+|B_{\sig_k}(u_{\sig_k,h_k},v-v_k)| + |B_{\sig_k}(u_{\sig_k,h_k},v_k)-(f,v)_{\cS_0}|\\
        \le &|B_\infty(\tilde{u},v)-B_{\sig_k}(u_{\sig_k,h_k},v)|+C_2\|u_{\sig_k,h_k}\|_{\cS_{\sig_k}}\|v-v_k\|_{\cS_{\sig_k}}+|(f,v-v_k)_{\cS_0}|\\
        \le &B_\infty(\tilde{u},v)-B_{\sig_k}(u_{\sig_k,h_k},v)|+C_2\frac{\|f\|_{\cS_0}}{C_1}M_2\|v-v_k\|_{\cH}+C_{\cH}\|f\|_{\cS_0}\|v-v_k\|_{\cH}\\
        \to &0,\quad k\to\infty,
    \end{split}\]
    where we used (Cii), (Ai) and the embedding inequality \(\|v\|_{\cS_0}\le C_{\cH}\|v\|_{\cH}\) for any \(v\in\cH\).
\end{proof}

With this abstract result at hand, we are now ready to prove \Cref{thm:convergence_non_int} and \Cref{thm:convergence_delta}.

\begin{proof}[Proof of \Cref{thm:convergence_non_int}]
    It suffices to verify assumptions (A)-(D) in \Cref{thm:main_convergence}. Using the notations in \Cref{thm:main_convergence}, we let \(\Sig=\N_+\), \(\cS_\sig=\cS^{\nub}_{w_n}(\Om)\), \(\cS_\infty=\cS^{\nub}_w(\Om)\), \(\cS_*=C^\infty_c(\Om)\) and \(\cH=H_{0}^{1}(\Omega)\), where functions in $H_{0}^{1}(\Omega)$
    are understood to be in $H^1(\R^d)$ and vanish outside of $\Omega$ as the boundary of $\Omega$ is regular enough \cite[Theorem~3.7]{wloka1987partial}.  Then \(M_1=1\), \(C_2=\|A\|_{L^\infty(\R^d;\R^{d\times d})}\) and \Cref{thm:cpt_seq_nonint_kernel} verifies (Aii). 
    
    The second inequality in (Ai) amounts to showing that there exists $M_2>0$ such that 
     \[\|u\|_{\cS^{\nub}_{w_n}(\Om)}\le M_2\|u\|_{H^1(\R^d)},\quad\forall u\in H_{0}^{1}(\Omega).\]
   But this follows from  \Cref{prop:SequalH} where the norm of \(\cS^{\nub}_{w_n}(\Om)\) can be expressed as 
    \[\|u\|_{\cS^{\nub}_{w_n}(\Om)}^2=\|u\|_{\cS^{\nub}_{w_n}(\R^d)}^2=\int_{\R^d}\left(1+|\lab^{\nub}_{w_n}(\xib)|^2\right)|\hat{u}(\xib)|^2d\xib,\quad\forall u\in \cS^{\nub}_{w_n}(\Om).\]
    Now using \eqref{Fsym0thorderest}, one obtains that
    \begin{equation}
        |\bm\lambda_{w_n}^{\nub}(\xib)|\le 2\sqrt{2}\pi M_{w_n}^1|\xib|+\sqrt{2}M_{w_n}^2\le 2\sqrt{2}\pi M_{w}^1|\xib|+\sqrt{2}M_{w}^2,\quad\xib\in\R^d.
    \end{equation}
   Thus the second inequality in (Ai) holds true for some \(M_2\) independent of \(n\) but dependent of \(w\),  using the fact that 
    \[\|u\|_{H^1(\R^d)}^2=\int_{\R^d}\left(1+|\xib|^2\right)|\hat{u}(\xib)|^2d\xib.\]
    
    For (Bii), one may choose \(C_1\) independent of \(N\) but dependent of some \(N_0\) determined by \Cref{thm:unifPoincare_nonint_kernel}. Note that (Ci) follows immediately from \cite[Theorem~3.2]{han2023nonlocal}. 
    
    To show (Cii), suppose \(v_n\in \cS^{\nub}_{w_n}(\Om)\),  \(\sup_{n\in\N_+}\|v_n\|_{\cS^{\nub}_{w_n}(\Om)}<\infty\), and \(v_n\to v\) in \(L^2(\Om)\) for \(v\in \cS^{\nub}_w(\Om)\). We need to prove that for any \(\phi\in C^\infty_c(\Om)\),
    \begin{equation}\label{eq:lim_bilinear_form-41} (A\mathfrak{G}^{\nub}_{w_n}v_n,\mathcal{G}^{\nub}_{w_n}\phi)_{L^2(\R^d;\R^d)}\to (A\mathfrak{G}^{\nub}_w v,\mathcal{G}^{\nub}_w \phi)_{L^2(\R^d;\R^d)},\quad n\to\infty.
    \end{equation}
    To that end, first, using (2.8) and (2.9) from \cite[Lemma~2.1]{han2023nonlocal}, one may show that \(\cG^{\nub}_{w_n}\phi\to \cG^{\nub}_w\phi\) in \(L^2(\R^d;\R^d)\) as \(n\to\infty\), and if \(\psib\in C^\infty_c(\Om;\R^d)\) then \(\cD^{-\nub}_{w_n}\psib\to\cD^{-\nub}_w\psib\) in \(L^2(\R^d)\) as \(n\to\infty\). Second, we show that \(\mathfrak{G}^{\nub}_{w_n}v_n\rightharpoonup \mathfrak{G}^{\nub}_w v\) in \(L^2(\R^d;\R^d)\). If this holds, then since \(A\in L^\infty(\R^d;\R^{d\times d})\), it follows that \(A\mathfrak{G}^{\nub}_{w_n}v_n\rightharpoonup A\mathfrak{G}^{\nub}_w v\) in \(L^2(\R^d;\R^d)\), hence, combing the first step, one can conclude that \eqref{eq:lim_bilinear_form-41} holds, and consequently, (Cii) holds. To show \(\mathfrak{G}^{\nub}_{w_n}v_n\rightharpoonup \mathfrak{G}^{\nub}_w v\) in \(L^2(\R^d;\R^d)\), fix some \(\wb\in L^2(\R^d;\R^d)\) and \(\ep>0\), by density, there exists \(\psib\in C^\infty_c(\R^d;\R^d)\) such that \(\|\wb-\psib\|_{L^2(\R^d;\R^d)}<\ep\). Then 
    \[\begin{split}
        &\left|(\mathfrak{G}^{\nub}_{w_n}v_n,\wb)_{L^2(\R^d;\R^d)}-(\mathfrak{G}^{\nub}_{w}v,\wb)_{L^2(\R^d;\R^d)}\right|\\
        \le &\left|(\mathfrak{G}^{\nub}_{w_n}v_n,\wb-\psib)_{L^2(\R^d;\R^d)}\right|+\left|(\mathfrak{G}^{\nub}_{w_n}v_n,\psib)_{L^2(\R^d;\R^d)}-(\mathfrak{G}^{\nub}_{w}v,\psib)_{L^2(\R^d;\R^d)}\right|\\
        &\quad +\left|(\mathfrak{G}^{\nub}_{w}v,\psib-\wb)_{L^2(\R^d;\R^d)}\right|\\
        \le &\ep\sup_{n\in\N_+}\|v_n\|_{\cS^{\nub}_{w_n}(\Om)}+|(v_n,\cD^{-\nub}_{w_n}\psib)_{L^2(\R^d)}-(v,\cD^{-\nub}_w \psib)_{L^2(\R^d)}|+\ep\|v\|_{\cS^{\nub}_w(\Om)}
    \end{split}\]
    Since \(\cD^{-\nub}_{w_n}\psib\to\cD^{-\nub}_w\psib\) in \(L^2(\R^d)\), together with \(v_n\to v\) in \(L^2(\R^d)\), one can conclude that \((v_n,\cD^{-\nub}_{w_n}\psib)_{L^2(\R^d)} \to(v,\cD^{-\nub}_w \psib)_{L^2(\R^d)}\) as \(n\to\infty\) and thus \(\mathfrak{G}^{\nub}_{w_n}v_n\rightharpoonup \mathfrak{G}^{\nub}_w v\) in \(L^2(\R^d;\R^d)\) and (Cii) holds.

    Finally, since \(\cH=H_{0}^{1}(\Omega)\)
    and \(\{V_{n,h}\}_{n,h}\) are spaces of piecewise polynomials of degree \(k\ge 1\), one may show that (D) is satisfied as in \cite[Theorem~3.8]{tian2014asymptotically}.
\end{proof}

\begin{proof}[Proof of \Cref{thm:convergence_delta}]
    The proof is quite similar to that of \Cref{thm:convergence_non_int}. We only point out a few key differences. Using the notations in \Cref{thm:main_convergence}, we let \(\Sig=(\del_0^{-1},\infty)\), \(\cS_\sig=\cS^{\nub}_{w_{1/\sig}}(\Om)\), \(\cS_\infty=H^1_0(\Om)\), \(\cS_*=C^\infty_c(\Om)\) and \(\cH=H_{0}^{1}(\Omega)\).  
    Then \Cref{thm:cpt_full} verifies (Aii). 
    
    For the second inequality in (Ai), one may argue as before for 
    \(\{w_\del\}\) in \Cref{ass:kernel4cptthm} by proving
    \[|\lab^{\nub}_{w_\del}(\xib)|\le D_1|\xib|+D_2,\quad\forall\xib\in\R^d\]
    where \(D_1,D_2\ge 0\) are constants independent of \(\del\). In fact the inequality is even true for kernels satisfying just \Cref{eq:kernelassumption,eq:kernelassumption_2,eq:kernelassumption_3} and follows from \eqref{Fsym0thorderest}, where 
    \[
    |\lab^{\nub}_{w_\del}(\xib)|\le 2\sqrt{2}\pi M^1_{w_{\delta}}|\xib|+\sqrt{2}M_{w_\delta}^2,\quad\forall\xib\in\R^d
    \] and that 
    \[
    M^1_{w_{\delta}} \to 2d \quad \text{and}\quad M_{w_\delta}^2\to 0\quad \text{as $\delta\to 0$.}
    \]
    
    Assumption (Bii) can be verified by \Cref{thm:unifPoincare}. 

    To show (Cii), suppose \(v_n\in \cS^{\nub}_{w_{\del_n}}(\Om)\),  \(\sup_{n\in\N_+}\|v_n\|_{\cS^{\nub}_{w_{\del_n}}(\Om)}<\infty\), and  \(v_n\to v\) in \(L^2(\Om)\) for \(v\in H^1_0(\Om)\). We need to prove that for any \(\phi\in C^\infty_c(\Om)\) as \(n\to\infty\)  (\(\del_n\to 0)\) 
    \begin{equation}\label{eq:lim_bilinear_form}
(A\mathfrak{G}^{\nub}_{w_{\del_n}}v_n,\mathcal{G}^{\nub}_{w_{\del_n}}\phi)_{L^2(\R^d;\R^d)}\to (A\nabla v,\nabla \phi)_{L^2(\R^d;\R^d)},\quad n\to\infty.
    \end{equation}
    This holds true by the same argument for \eqref{eq:lim_bilinear_form-41} if one notices that, according to \Cref{prop:locallim_ptws_lp}, \(\mathcal{G}^{\nub}_{w_{\del_n}}\phi\to \nabla \phi\) in \(L^2(\R^d;\R^d)\), and if \(\psib\in C^\infty_c(\Om;\R^d)\) then \(\cD^{-\nub}_{w_{\del_n}}\psib\to\divv\psib\) in \(L^2(\R^d)\) as \(n\to\infty\). 
    
    The rest of the proof is similar to that of \Cref{thm:convergence_non_int} and thus omitted.
\end{proof}

\section{Application II.  Optimal Control Problem of a Nonlocal Equation}\label{sec:optimal_control}
\subsection{Parameterized continuous and discrete optimal control problems}
In this section, we apply the compactness results proved in \Cref{sec:cpt} to analyze the well-posedness, the numerical approximation, and the limiting behavior of solutions of a parameterized optimal control problem of linear nonlocal equations.  Optimal control problems with constraints that involve nonlocal equations have been a focus of recent research interest. While this work is inspired by work \cite{mengesha2023optimal} which deals with an optimal control problem of linearized peridynamics, we should mention that such types of problems where the state equation is either a fractional or nonlocal equation have been investigated in literature. To cite a few, the paper \cite{d2014optimal} rigorously analyzed optimal control problems when the state equation is of the form 
\begin{equation}\label{NL-old-form}
-\mathcal{L}_{\delta}u=g,\quad \text{where\quad  $\mathcal{L}_{\delta}u(x) = 2\int_{\Omega}(u(\yb)-u(\xb))\gamma_\delta(\xb,\yb) d\yb$ }
\end{equation}
subject to some volumetric boundary condition outside of $\Omega$. In the above, $\gamma_\delta(\xb, \yb)$ serves as a kernel. The paper also presents a finite element approximation and numerical simulations which illustrate the theoretical results. The papers \cite{munoz2022local,MR4118336} have also studied optimal control problems with the state equation being the parameterized nonlocal equations of the form \eqref{NL-old-form} and demonstrated rigorously the convergence of optimal pairs to  an optimal pair of an optimal control problem with a local (PDE) based state equation. Mathematical analysis and numerical approximation of optimal control problems with the fractional equation as the state equation have also be been investigated in  
\cite{antil2017note, antil2020optimal, antil2022optimal, burkovska2021optimization, d2019priori,otarola2019maximum}.  
To put our work in a clear perspective, we should mention that the parameterized state equation constraints we will be dealing with is different from the classical fractional equations and the nonlocal equations of the form \eqref{NL-old-form}. 

To properly describe the problem we study,  
we assume that \(\{w_\del\}_{\del>0}\) is a family of kernels given in \Cref{ass:kernel4cptthm}. We fix \(A\in L^\infty(\R^d;\R^{d\times d})\) which is uniformly elliptic and symmetric. That is, for any $\xb\in \Om$, $A(\xb) = A(\xb)^{T}$ and  for some  \(\mu>0\) we have  
\[\xib^TA(\xb)\xib\ge \mu |\xib|^2,\quad\forall\xb\in\R^d,\ \xib\in \R^d.\]
Given  $\Om$, a bounded polygonal domain in $\R^d$, and $\nub\in \mathbb{S}^{d-1}$, consider the parameterized nonlocal equations
\begin{equation}\label{eq:conv_diff_non_int-optimal}
    \left\{\begin{split}
        -\mathfrak{D}^{-\nub}_{w_\delta}(A\mathfrak{G}^{\nub}_{w_\delta} u)&=f\ \text{in}\ \Om,\\
        u&=0\ \text{in}\ \R^d\backslash\Om, 
    \end{split}\right.
\end{equation}
where the right-hand side data $f$ comes from an admissible class that satisfies a box condition. Given the functions $\alpha, \beta \in C(\overline{\Omega})$ such that $\alpha(\xb) <\beta(\xb)$ for all $\xb\in \Omega$,
we introduce the admissible class of right-hand side 
\[
Z_{\mathrm{ad}} = \{g\in L^{2}(\Omega): \alpha(\xb) \leq g(\xb) \leq \beta(\xb),\,\,\text{a.e. } \xb\in\Omega\}.
\]
We remark that there exists a constant $B>0$ such that
\begin{equation}\label{eq:bound_Z_ad}
    \sup_{g\in Z_{\mathrm{ad}}}\|g\|_{L^2(\Om)}\le B,
\end{equation}
since for any $g\in Z_{\mathrm{ad}}$, $|g(\xb)|\leq \max\{|\alpha(\xb)|, |\beta(\xb)|\}$ a.e. $\xb\in\Om$ and $\Om$ is bounded. 
We note that corresponding to each $f\in L^2(\Om)$, a unique solution $u_{\delta}(f) \in \mathcal{S}_{w_\delta}^{\nub}(\Omega)$ exists that solve the weak form of \eqref{eq:conv_diff_non_int-optimal}
 \begin{equation*}
        B_\del(u_\del,v) =\langle  f,v\rangle,\quad \forall v\in \cS^{\nub}_{w_\del}(\Om), 
\end{equation*}
where $B_\del:\cS^{\nub}_{w_\del}(\Om)\times \cS^{\nub}_{w_\del}(\Om)\to \R$ is the bilinear form given in  \eqref{eq:bilinear_form_w} with $w_\del$ in place of $w_n$ and \(\langle \cdot,\cdot\rangle\) is the inner product on \(L^2(\Om)\).
The solution map $\mathfrak{S}_\delta:L^{2}(\Omega) \to \mathcal{S}_{w_\delta}^{\nub}(\Omega)$ given by $\mathfrak{S}_\del(f)= u_\delta(f)$ is a bounded linear map with the estimate 
\begin{equation}\label{eq:sol_op_energy_norm}
    |\mathfrak{S}_\del(f)|_{\mathcal{S}_{w_\delta}^{\nub}(\Omega)}=|u_\delta(f)|_{\mathcal{S}_{w_\delta}^{\nub}(\Omega)} \leq \frac{C(\del_0)}{\mu} \|f\|_{L^{2}(\Omega)},\quad \forall f\in L^2(\Om),\ \del\in (0,\del_0),
\end{equation}
where the constants \(\del_0>0\) and \(C(\del_0)>0\) are chosen as in \Cref{thm:unifPoincare}. Indeed, By taking the test function \(v=u_\del\) in the weak form of \eqref{eq:conv_diff_non_int-optimal} and using the ellipticity of $A$, we know 
    \[\mu |u_\del|_{\cS^{\nub}_{w_\del}(\Om)}^2\le B_\del(u_\del,u_\del)=\langle  f,u_\del\rangle\le \| f\|_{L^2(\Om)}\|u_\del\|_{L^2(\Om)}.\]
    Combing this with the inequality $\|u_\del\|_{L^2(\Om)}\le C(\del_0)|u_\del|_{\cS^{\nub}_{w_\del}(\Om)}$ from \Cref{thm:unifPoincare}, one obtains \eqref{eq:sol_op_energy_norm} and, in addition, the following $L^2$ estimate
    \begin{equation}\label{eq:sol_op_L2_norm}
        \|\mathfrak{S}_\del(f)\|_{L^2(\Om)}\le \frac{C(\del_0)^2}{\mu} \|f\|_{L^{2}(\Omega)},\quad \forall f\in L^2(\Om),\ \del\in (0,\del_0).
    \end{equation}
    Hence $\mathfrak{S}_\del:L^2(\Om)\to L^2(\Om)$ is a bounded linear operator and $\|\mathfrak{S}_\del\|_{L^2(\Om)\to L^2(\Om)}\le C(\del_0)^2/\mu$ for all $\del\in (0,\del_0)$.

As we vary $f$ in $Z_{\mathrm{ad}}$,  the goal of the optimal control problem is to find a pair $(u_\delta(f), f) \in \mathcal{S}_{w_\delta}^{\nub}(\Omega)\times Z_{\mathrm{ad}}$, called the \textit{optimal solution pair}, that minimizes a certain objective functional $I_\delta(u, g):\mathcal{S}_{w_\delta}^{\nub}(\Omega) \times Z_{\mathrm{ad}} \to \mathbb{R}$. In this case, the right-hand side $f$ will serve as a \textit{control variable} and $u$ will be the \textit{state variable} satisfying the state equation \eqref{eq:conv_diff_non_int-optimal}. Following the work in \cite{mengesha2023optimal}, we consider the objective functional $I_\delta = I|_{\mathcal{S}_{w_\delta}^{\nub}(\Omega) \times Z_{\mathrm{ad}}}$ where $I: L^{2}(\Omega)\times Z_{\mathrm{ad}}\to \mathbb{R}$ is given by 
\[I(u, g ):=\int_{\Om} F(\xb,u(\xb))d\xb +\frac{\lambda}{2}\int_{\Om} \Ga(\xb)| g (\xb)|^2 d\xb,\]
where \(\lambda> 0\), \(\Ga\in L^1(\Om)\) is positive and \(F:\Om\times \R\to \R\) satisfies the following properties:
\begin{enumerate}
    \item For all \(\xi\in\R\) the mapping \(\xb\mapsto F(\xb,\xi)\) is measurable.
    \item For a.e. \(\xb\in\Om\) the mapping \(\xi\mapsto F(\xb,\xi)\) is convex (and continuous).
    \item There exist a constant \(c_1>0\) and a function \(\ell\in L^1(\Om)\) for which \[|F(\xb,\xi)|\le c_1|\xi|^2+\ell(\xb)\] for a.e. \(\xb\in \Om\) and all \(\xi\in\R\).
    \item There exist two functions \(c\in L^1(\Om)\) and \(d\in L^2(\Om)\) such that \[F(\xb,\xi)\ge c(\xb)+d(\xb)  \xi\]
    for a.e. \(\xb\in \Om\) and all \(\xi\in\R\).
    \item \(F\) is continuously differentiable in the second argument. That is, for a.e. $\xb\in\Om$, \( F_\xi(\xb, \cdot) \in C^0(\R)\) where 
    \( F_\xi (\xb,\xi):=\frac{\partial F}{\partial \xi}(\xb,\xi)\) for \(\xb\in\Om\) and \(\xi\in\R\).
    Moreover, there exist a constant \(c_2>0\) and a function $e\in L^{2}(\Omega)$ such that 
    \begin{equation}\label{eq:F_xi_est}
        | F_\xi (\xb,\xi)|\le c_2|\xi| + e(\xb),\quad\text{a.e.}\ \xb\in \Om,\ \forall\xi\in \R.
    \end{equation}
\end{enumerate}
A typical objective functional is $
I(u, g) = \|u - u_{\mathrm{des}}\|_{L^{2}(\Om)}^{2} + \frac{\lambda}{2}\|g\|_{L^{2}}^{2}$ where we are looking for an optimal pair  $(u, g)$ where the control has the smallest $L^{2}(\Om)$-norm and the state $u$ is the closest to a given desired state $u_{\mathrm{des}} \in L^{2}(\Omega)$. 

There are now a number of outstanding questions we would like to address in this section. First, we would like to address whether the problem is well-posed, that is, whether there is a unique optimal pair $(\overline{u_{\delta}}, \overline{f_{\delta}})$ to the optimal control problem. Second, we would like to study the asymptotic behavior, with respect to appropriate topology, of the sequence of optimal pairs $(\overline{u_{\delta}}, \overline{f_{\delta}})$ as $\delta\to 0,$ and if there is a limit point $(u, f)$, we would like to determine whether it solves an optimal control problem. And finally, we would like to study the discretization of the parameterized optimal control problems and establish the existence of asymptotically compatible schemes. 

The following result states the well-posedness of the parameterized optimal control problem as well as the convergence of solutions. It is one of the main results of this section and its proof is deferred for later. 
\begin{theorem}[Continuous problems and convergence of solutions]\label{conts-summary-optimal}
Assume all the conditions we stated at the beginning of this section on $\Omega$, the sequence of kernels $\{w_\delta\}_{\delta>0}$,  the control admissible set $Z_{\mathrm{ad}}$,  and the objective functional $I$. Then for each $\delta>0,$ there exists a unique optimal pair 
\((\overline{u_\del},\overline{ g _\del})\in \cS^{\nub}_{w_\del}(\Om)\times Z_{\mathrm{ad}}\) such that
    \begin{equation}\label{prob:non_con}
        I(\overline{u_\del},\overline{ g _\del}) = \min I(u_\del, g)
    \end{equation}
 where the minimization is over pairs \((u_\del, g)\in \cS^{\nub}_{w_\del}(\Om)\times Z_{\mathrm{ad}}\) that satisfy
    \begin{equation}\label{nonloc_state_eq}
        B_\del(u_\del,v) =\langle  g,v\rangle,\quad \forall v\in \cS^{\nub}_{w_\del}(\Om).
    \end{equation}
    Moreover, there exists a  unique pair $(\overline{u}, \overline{g})\in H^{1}_{0}(\Omega)\times Z_{\mathrm{ad}}$ such that \(\overline{u_\del}\to \overline{u}\) in \(L^2(\Om)\) and \(\overline{ g _\del}\rightharpoonup  \overline{ g }\) in \(L^2(\Om)\) and  \((\overline{u},\overline{ g })\) solves 
    \begin{equation}
    I(\overline{u},\overline{ g }) = \min I(u, g )
\end{equation}
where the minimization is over pairs \((u, g )\in H^1_0(\Om)\times Z_{\mathrm{ad}}\) that satisfy
\begin{equation}\label{loc_state_eq}
    B_0(u,v) =\langle  g ,v\rangle,\quad \forall v\in H^1_0(\Om).
\end{equation}
Here \(B_0(u,v):=\int_{\Om} A(\xb)\nabla u(\xb)\cdot \nabla v(\xb)d\xb\).
\end{theorem}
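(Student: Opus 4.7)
The plan is to follow the direct method of the calculus of variations at each fixed $\delta$, then to pass to the limit $\delta \to 0$ using the compactness machinery of \Cref{sec:cpt} and the asymptotic compatibility of the state equation established in \Cref{thm:convergence_delta}. I would carry this out in three stages.

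First, for fixed $\delta \in (0, \delta_0)$, reduce the problem to minimization of the reduced functional $J_\del(g) := I(\mathfrak{S}_\del(g), g)$ over $g \in Z_{\mathrm{ad}}$. The admissible set $Z_{\mathrm{ad}}$ is convex, closed, and bounded in $L^2(\Om)$, hence weakly compact. Linearity of $\mathfrak{S}_\del$, convexity of $F(\xb, \cdot)$, and the strict convexity of the weighted quadratic term $\frac{\lambda}{2}\int \Gamma|g|^2 d\xb$ (using $\Gamma > 0$ and $\lambda > 0$) together render $J_\del$ strictly convex. Weak lower semi-continuity of both terms of $I$ on $L^2(\Om)$ (the first by convexity of $F$ together with the quadratic growth condition, giving $L^2$-to-$L^1$ continuity of the Nemytskii operator; the second by Mazur's lemma applied to the convex, Fatou-controlled weighted quadratic functional on bounded sequences) yields, via the direct method, a unique minimizer $\overline{g_\del} \in Z_{\mathrm{ad}}$, and we set $\overline{u_\del} := \mathfrak{S}_\del(\overline{g_\del})$.

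Next, along any sequence $\del_n \to 0$, exploit the uniform bound $\|\overline{g_{\del_n}}\|_{L^2} \leq B$ from \eqref{eq:bound_Z_ad} to extract $\overline{g_{\del_n}} \rightharpoonup \overline{g}$ weakly in $L^2(\Om)$. By \eqref{eq:sol_op_energy_norm}, $|\overline{u_{\del_n}}|_{\cS^{\nub}_{w_{\del_n}}(\Om)} \leq CB$ uniformly, so the compactness theorem \Cref{thm:cpt_full} gives a sub-subsequence with $\overline{u_{\del_n}} \to \overline{u}$ strongly in $L^2(\Om)$ and $\overline{u} \in H^1_0(\Om)$. To pass to the limit in the state equation $B_{\del_n}(\overline{u_{\del_n}}, v) = \langle \overline{g_{\del_n}}, v\rangle$ for $v \in C^\infty_c(\Om)$, repeat the Cii-type argument used in the proof of \Cref{thm:convergence_delta}: the test direction converges strongly, $\mathcal{G}^{\nub}_{w_{\del_n}} v \to \nabla v$ in $L^2(\R^d; \R^d)$ by \Cref{prop:locallim_ptws_lp}, while $\mathfrak{G}^{\nub}_{w_{\del_n}} \overline{u_{\del_n}} \rightharpoonup \nabla \overline{u}$ weakly in $L^2(\R^d; \R^d)$; the right-hand side passes by weak $L^2$ convergence of $\overline{g_{\del_n}}$, and density in $H^1_0(\Om)$ upgrades the limiting identity to \eqref{loc_state_eq}.

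Finally, to prove optimality of $(\overline{u}, \overline{g})$, fix any competing admissible pair $(w, g) \in H^1_0(\Om) \times Z_{\mathrm{ad}}$ with $w$ solving \eqref{loc_state_eq}, and construct the recovery sequence $w_{\del_n} := \mathfrak{S}_{\del_n}(g)$. By \Cref{thm:convergence_delta}, $w_{\del_n} \to w$ strongly in $L^2(\Om)$, so $I(w_{\del_n}, g) \to I(w, g)$ by continuity of the Nemytskii operator $u \mapsto \int F(\xb, u)d\xb$ on $L^2$. From the inequality $I(\overline{u_{\del_n}}, \overline{g_{\del_n}}) \leq I(w_{\del_n}, g)$, taking $\liminf$ on the left (continuity of the first term along the strong convergence $\overline{u_{\del_n}} \to \overline{u}$, and weak lower semi-continuity of the second term along $\overline{g_{\del_n}} \rightharpoonup \overline{g}$) and $\lim$ on the right yields $I(\overline{u}, \overline{g}) \leq I(w, g)$. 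Strict convexity of the reduced local functional $g \mapsto I(u_g, g)$ on $Z_{\mathrm{ad}}$ gives uniqueness of the local minimizer, and since every subsequence $\del_n \to 0$ admits a further subsequence converging to this same unique limit, the full family $(\overline{u_\del}, \overline{g_\del})$ converges in the stated senses. The main obstacle is the recovery construction: without the strong $L^2$ convergence $w_{\del_n} \to w$ at a fixed right-hand side supplied by \Cref{thm:convergence_delta}, one could not transfer a local admissible competitor into an asymptotic nonlocal pair, and the comparison inequality could not be exploited; a lesser technical point is the treatment of the $L^1$ weight $\Gamma$, which is handled cleanly by the intrinsic $L^\infty$ bound built into $Z_{\mathrm{ad}}$.
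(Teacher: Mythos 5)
Your proposal is correct and follows essentially the same approach as the paper: reduce to the functional $j(g)=I(\mathfrak{S}_\delta(g),g)$ and apply the direct method for well-posedness at fixed $\delta$; use the uniform $L^2$ bound from $Z_{\mathrm{ad}}$ together with $\eqref{eq:sol_op_energy_norm}$ and the compactness theorem \Cref{thm:cpt_full} to extract limits; pass to the limit in the state equation by the (Cii)-type argument from the proof of \Cref{thm:convergence_delta}; and finally compare against the recovery sequence $\mathfrak{S}_{\delta_n}(g)$ to establish optimality of the limit pair. The only differences are cosmetic (you invoke Mazur's lemma where the paper uses the weak convergence of $\sqrt{\Gamma}\,\overline{g_\delta}$, and you make explicit the subsequence-of-a-subsequence step to get full-family convergence), neither of which changes the substance of the argument.
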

To state the corresponding result for discretized problems, let us introduce some notations. Let \(\{\mathcal{T}_h\}_{h>0}\) be a quasi-uniform mesh of size \(h\) on \(\Om\). Let \(X_{h}\)  be the space of continuous piecewise linear functions subject to the mesh with zero nonlocal boundary data:  
\[X_h:= \{ w _h\in C^{0}(\overline{\Omega})|\  w _h|_T\in \mathcal{P}_1(T),\ \forall T\in \mathcal{T}_h,\  w _h=0\ \text{on}\ \R^d\backslash\Om\}.\]
We equip $X_h$ with \(H^1(\Om)\)-norm. For the nonlocal discrete problem, we use the space of piecewise linear functions that are in $\cS^{\nub}_{w_\del}(\Om)$: 
\[ X_{\del,h} =  \{ w _h\in \cS^{\nub}_{w_\del}(\Om) |\  w _h|_T\in \mathcal{P}_1(T),\ \forall T\in \mathcal{T}_h\} 
\] equipped with \(\cS^{\nub}_{w_\del}(\Om)\)-norm. Similarly, let \(Z_h\) denote the piecewise constant functions with respect to the mesh \(\{\mathcal{T}_h\}_{h>0}\), i.e.,
\[Z_h:=\{z_h\in L^\infty(\Om)|\ z_h|_T\in \mathcal{P}_0(T),\ \forall T\in \mathcal{T}_h\}.\]
where \(\mathcal{P}_m(T)\), as before, is the space of polynomials of degree \(m\).  Notice that since $\alpha(\xb) < \beta(\xb) $ for all $\xb\in \Omega$, there exists a constant $h_0>0$ such that $Z_h\cap Z_{\mathrm{ad}} \neq \emptyset$ for any $h\in (0,h_0)$. Hereafter, we will always assume $h\in (0,h_0)$ implicitly.

\begin{theorem}[Discrete problems and convergence of solutions] \label{discrete-summary-optimal}
Assume all the conditions used in \Cref{conts-summary-optimal}. Then for any $\delta>0$, there exists a unique optimal pair  
\((\overline{u_{\del,h}},\overline{ g _{\del,h}})\in X_{\del,h}\times (Z_{h}\cap Z_{\mathrm{ad}})\) such that 
    \begin{equation}\label{prob:non_disc}
    I(\overline{u_{\del,h}},\overline{ g _{\del,h}})=\min I(u_{\del,h}, g _{\del,h})\end{equation}
    where the minimization is over pairs \((u_{\del,h}, g _{h})\in X_{\del,h}\times (Z_{h}\cap Z_{\mathrm{ad}})\) that satisfy 
    \begin{equation}\label{disc_nonloc_state_eq}
        B_\del(u_{\del,h},v_{\del,h}) = \langle  g _{h},v_{\del,h}\rangle,\quad\forall v_{\del,h}\in X_{\del,h}.
    \end{equation}
 Moreover, there is a unique pair $(\overline{u_h}, \overline{g_h})\in X_h \times (Z_h\cap Z_{\mathrm{ad}})$ such that  
 \(\overline{u_{\del,h}}\to \overline{u_h}\) in \(L^2(\Om)\) , \(\overline{ g _{\del,h}}\rightharpoonup \overline{ g _h}\) in \(L^2(\Om)\), as $\delta \to 0$, and 
  in addition, \((\overline{u_h},\overline{ g _h})\) solves the local discrete optimal control 
 \begin{equation}\label{prob:loc_con}
 I(\overline{u_h},\overline{ g _h})=\min I(u_h, g _h)\end{equation}
    where the minimization is over pairs \((u_h, g _h)\in X_h\times (Z_{h}\cap Z_{\mathrm{ad}})\) that satisfy 
    \begin{equation}\label{disc_loc_state_eq}
        B_0(u_h,v_h)=\langle  g _h,v_h\rangle,\quad\forall v_h\in X_h.
    \end{equation}
\end{theorem}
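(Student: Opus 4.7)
The plan is to prove \Cref{discrete-summary-optimal} by paralleling the argument sketched for the continuous case \Cref{conts-summary-optimal}, while exploiting (i) the finite dimensionality of the discretization and (ii) the compactness theorem \Cref{thm:cpt_full} together with the bilinear form convergence already established in the proof of \Cref{thm:convergence_delta}.

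For fixed $\del\in(0,\del_0)$ I would first establish existence and uniqueness of $(\overline{u_{\del,h}},\overline{g_{\del,h}})$. For any $g_h\in Z_h$, the state equation \eqref{disc_nonloc_state_eq}, posed on the finite-dimensional subspace $X_{\del,h}\subseteq \cS^{\nub}_{w_\del}(\Om)$, is uniquely solvable by Lax--Milgram using the ellipticity of $A$ and the uniform Poincar\'e inequality \Cref{thm:unifPoincare}, giving a continuous linear discrete solution map $\mathfrak{S}_{\del,h}\colon Z_h\to X_{\del,h}$. The reduced functional $\widetilde I_{\del,h}(g_h):=I(\mathfrak{S}_{\del,h}(g_h),g_h)$ is the sum of a convex term (the integrand $F$ composed with the linear map $\mathfrak{S}_{\del,h}$) and the strictly convex Tikhonov term $\tfrac{\lambda}{2}\int_\Om\Ga|g_h|^2 d\xb$, hence strictly convex. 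Since $Z_h\cap Z_{\mathrm{ad}}$ is a nonempty (for $h<h_0$), closed, bounded, convex---therefore compact---subset of the finite-dimensional space $Z_h$, the direct method yields a unique minimizer $\overline{g_{\del,h}}$, and $\overline{u_{\del,h}}:=\mathfrak{S}_{\del,h}(\overline{g_{\del,h}})$ is the corresponding unique optimal state.

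Next, I would extract limits as $\del\to 0$. From \eqref{eq:bound_Z_ad}, $\|\overline{g_{\del,h}}\|_{L^2(\Om)}\le B$ uniformly, and then \eqref{eq:sol_op_energy_norm} yields $|\overline{u_{\del,h}}|_{\cS^{\nub}_{w_\del}(\Om)}\le C(\del_0)B/\mu$ uniformly in $\del$. Along any sequence $\del_n\to 0$, finite dimensionality of $Z_h$ extracts a subsequence with $\overline{g_{\del_n,h}}\to \overline{g_h}$ strongly in $L^2(\Om)$, where $\overline{g_h}\in Z_h\cap Z_{\mathrm{ad}}$, while \Cref{thm:cpt_full} extracts $\overline{u_{\del_n,h}}\to \overline{u_h}$ in $L^2(\Om)$ with $\overline{u_h}\in H^1_0(\Om)$. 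Each $\overline{u_{\del_n,h}}$ being piecewise affine on the fixed mesh $\cT_h$ makes the $L^2$-limit $\overline{u_h}$ piecewise affine on $\cT_h$, and the $H^1_0(\Om)$-regularity then forces continuity across inter-element boundaries, so $\overline{u_h}\in X_h$. Testing \eqref{disc_nonloc_state_eq} against any fixed $v_h\in X_h\subseteq X_{\del_n,h}$ and invoking the bilinear form convergence $B_{\del_n}(\overline{u_{\del_n,h}},v_h)\to B_0(\overline{u_h},v_h)$ (verified in condition (Cii) of the proof of \Cref{thm:convergence_delta}) yields $B_0(\overline{u_h},v_h)=\langle \overline{g_h},v_h\rangle$, so $(\overline{u_h},\overline{g_h})$ is feasible for \eqref{disc_loc_state_eq}.

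Optimality I would obtain via a recovery-sequence argument. Given any competitor $(u_h,g_h)\in X_h\times (Z_h\cap Z_{\mathrm{ad}})$ feasible in \eqref{disc_loc_state_eq}, let $u_{\del,h}^\ast\in X_{\del,h}$ solve $B_\del(u_{\del,h}^\ast,v)=\langle g_h,v\rangle$ for all $v\in X_{\del,h}$; then $u_{\del_n,h}^\ast\to u_h$ in $L^2(\Om)$ by \Cref{thm:convergence_delta}. The growth bound \eqref{eq:F_xi_est} makes $u\mapsto \int_\Om F(\xb,u)d\xb$ continuous on $L^2(\Om)$ and (by convexity of $F(\xb,\cdot)$) lower semicontinuous, while $\tfrac{\lambda}{2}\int_\Om\Ga|\cdot|^2 d\xb$ is weakly $L^2$-lower semicontinuous. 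Combining with optimality of $(\overline{u_{\del_n,h}},\overline{g_{\del_n,h}})$,
\[I(\overline{u_h},\overline{g_h})\le \liminf_{n\to\infty} I(\overline{u_{\del_n,h}},\overline{g_{\del_n,h}})\le \limsup_{n\to\infty} I(u_{\del_n,h}^\ast,g_h)= I(u_h,g_h),\]
identifying $(\overline{u_h},\overline{g_h})$ as a minimizer of \eqref{prob:loc_con}. Strict convexity of the reduced local discrete functional on $Z_h\cap Z_{\mathrm{ad}}$ (same argument as in the first step) gives uniqueness of the minimizer, upgrading subsequential convergence to convergence of the entire family as $\del\to 0$.

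The main obstacle is the passage to the limit in the objective: the strong $L^2$ convergence of $\overline{u_{\del,h}}$ supplied by \Cref{thm:cpt_full} is precisely what is needed to handle the convex nonlinearity $F(\xb,u)$, while the Tikhonov term passes through by standard weak $L^2$ lower semicontinuity. A secondary technical point is guaranteeing $\overline{u_h}\in X_h$ and not merely a piecewise affine $L^2$-function; this is closed up by the $H^1_0$-regularity boost built into the compactness theorem, which forces the limit to be continuous across element boundaries.
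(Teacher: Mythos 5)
Your proposal is correct and takes the route the paper indicates but does not spell out: the paper states that the discrete theorem is proved by the same techniques as the continuous \Cref{conts-summary-optimal} and omits the details. Your write-up faithfully adapts the continuous proof---direct method plus strict convexity for well-posedness, the compactness \Cref{thm:cpt_full} together with boundedness of $Z_{\mathrm{ad}}$ for extraction of limits, passage to the limit in the state equation, and a recovery-sequence argument with lower semicontinuity for optimality---while correctly supplying the discrete-specific steps that are genuinely needed and not made explicit in the paper: exploiting the finite dimension of $Z_h$ to upgrade the control convergence to strong $L^2$ convergence, and the observation that the $L^2$-limit of the piecewise-affine states is itself piecewise affine and lands in $X_h$ because the compactness theorem boosts it into $H^1_0(\Om)$, which rules out jumps across element faces. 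Two citation nits: the $L^2$-continuity of $u\mapsto\int_\Om F(\xb,u)\,d\xb$ comes from the quadratic growth condition (item (3) of the assumptions on $F$), not from \eqref{eq:F_xi_est}, which bounds $F_\xi$; and the bilinear-form convergence $B_{\del_n}(\cdot,v_h)\to B_0(\cdot,v_h)$ for $v_h\in X_h$ (merely Lipschitz, not smooth) needs a density extension of the (Cii) argument, for instance via the uniform symbol bound $|\lab^{\nub}_{w_\del}(\xib)|\le D_1|\xib|+D_2$ established in the proof of \Cref{thm:convergence_delta}, which gives $\mathfrak{G}^{\nub}_{w_\del}\colon H^1(\R^d)\to L^2(\R^d;\R^d)$ uniformly bounded. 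Similarly, the convergence $u^*_{\del_n,h}\to u_h$ at \emph{fixed} $h$ is not a literal instance of \Cref{thm:convergence_delta} (a joint $(\del,h)\to(0,0)$ statement) but follows from the same compactness-plus-identification argument you already applied to $\overline{u_{\del_n,h}}$. None of these affects the correctness of the argument.
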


Before we present the proofs of \Cref{conts-summary-optimal} and \Cref{discrete-summary-optimal}, we note that applying the nonlocal and local Poincar\'e inequality and Lax-Milgram theorem, we know the state equations \eqref{nonloc_state_eq}, \eqref{loc_state_eq}, \eqref{disc_nonloc_state_eq} and \eqref{disc_loc_state_eq} are uniquely  solvable in their corresponding energy spaces. Like the continuous case, for equation \eqref{disc_nonloc_state_eq} we also introduce the discrete solution operator $\fS_{\del,h}:L^2(\Om)\to X_{\del,h}\subset \cS^{\nub}_{w_\del}(\Om)\subset L^2(\Om)$. One may check the same uniform estimates \eqref{eq:sol_op_energy_norm} and \eqref{eq:sol_op_L2_norm} hold for $\fS_{\del,h}$ for all $\del\in (0,\del_0)$ and $h\in (0,h_0)$. Since the techniques to prove the well-posedness of the four minimization problems as well as  the convergence results are essentially the same for the two theorems, we present the proof of \Cref{conts-summary-optimal} and omit the proof of \Cref{discrete-summary-optimal}. 
\begin{proof}[Proof of \Cref{conts-summary-optimal}] We will prove the theorem in several steps.

{\bf Well-posedness:} We show existence of a minimizer to the objective functional subject to the nonlocal constraint using the direct method of calculus of variations.  
Using the solution operator $\mathfrak{S}_\del$, we will work on the reduced cost functional  $j(g)=I(\mathfrak{S}_{\del}(g),g)$ given by 
\begin{equation}\label{eq:def_reduced_objective_functional_j}
    j( g ):=\mathfrak{F}(\mathfrak{S}_{\del}(g) )+\frac{\lambda}{2}\int_{\Om}\Ga(\xb)| g (\xb)|^2 d\xb,\end{equation}
where \(\mathfrak{F}:L^2(\Om)\to \R\) is defined by 
    \[\mathfrak{F}(v):=\int_{\Om} F(\xb,v(\xb))d\xb.\]
    We apply the direct method of calculus of variations to the problem of finding a minimizer to $\inf_{g\in Z_{\mathrm{ad}}} j(g)$.
    
     We first notice that \(Z_{\mathrm{ad}}\) is a closed, convex and bounded subset of Hilbert space \(L^2(\Om)\), by \cite[Theorem~2.11]{troltzsch2010optimal} \(Z_{\mathrm{ad}}\) is weakly sequentially compact. In addition, $j$ is bounded from below on $Z_{\mathrm{ad}}$. Indeed, it suffices to show that 
    $\mathfrak{F}\circ \mathfrak{S}_\del: Z_{\mathrm{ad}}\to\R$ is bounded from below, since the second term in $j$ is nonnegative. To that end, using item (3) of the the assumption on $F$, \eqref{eq:bound_Z_ad} and \eqref{eq:sol_op_L2_norm}, we have that for any $g\in Z_{\mathrm{ad}},$
    \[
    \begin{split}
    \mathfrak{F}(\mathfrak{S}_{\delta}(g)) &\geq -c_1\|\mathfrak{S}_\delta(g)\|^{2}_{L^{2}(\Omega)} -\|\ell\|_{L^1(\Omega)} \\
    &\geq -c_1\frac{C(\del_0)^4}{\mu^2}\|g\|_{L^{2}(\Omega)}^{2}-\|\ell\|_{L^1(\Omega)} \geq -c_1B^2\frac{C(\del_0)^4}{\mu^2}-\|\ell\|_{L^1(\Omega)}.
    \end{split}
    \]
    
    We henceforth denote $j_0=\inf_{g\in Z_{\mathrm{ad}}} j(g)$. It remains to show that there exists $\tilde{g}\in Z_{\mathrm{ad}}$ such that $j_0=j(\tilde{g})$. To this end, first we find a sequence $\{g_n\}_{n=1}^\infty\subset Z_{\mathrm{ad}}$ such that $\lim_{n\to\infty} j(g_n)=j_0$. Again by \cite[Theorem~2.11]{troltzsch2010optimal} there exists a subsequence $\{g_{n_k}\}_{k=1}^\infty\subset Z_{\mathrm{ad}}$ converging to some $\tilde{g}\in Z_{\mathrm{ad}}$ weakly in $L^2(\Om)$. Since $|\sqrt{\Gamma(\xb)}g_{n_k}(\xb)|\le |\sqrt{\Ga(\xb)}|\min\{|a(\xb)|,|b(\xb)|\}$, $\{\sqrt{\Gamma(\xb)}g_{n_k}(\xb)\}_{k=1}^\infty$ is uniformly bounded in $L^2(\Om)$. Then one can further obtain a subsequence, still denoted by $\{g_{n_k}\}_{k=1}^\infty$, converging weakly in $L^2(\Om)$. By a density argument, it is not hard to show this weak limit is $\sqrt{\Ga(\xb)}\tilde{g}(\xb)$.  Also, since \(F(\xb,\cdot)\) is convex in \(\R\) a.e. \(\xb\in\Om\) and $F$ is bounded from below by an affine map according to items (2) and (4) of the assumption on $F$,  
     \cite[Theorem~6.54]{fonseca2006modern} 
    implies that \(\mathfrak{F}\) is sequentially weakly lower semicontinuous. Moreover, since $\mathfrak{S}_\del$ is continuous on $L^{2}(\Omega)$, it is weakly continuous on $L^{2}(\Omega)$ and therefore, $\mathfrak{F}\circ \mathfrak{S}_\del:L^2(\Om)\to \R$ is also sequentially weakly lower semicontinuous on $L^{2}(\Omega)$. 
    Combining the above, we have that 
    \[\begin{split}
        j(\tilde{g})&=\mathfrak{F}(\mathfrak{S}_\del \tilde{g} )+\frac{\lambda}{2}\int_{\Om}\Ga(\xb)| \tilde{g} (\xb)|^2 d\xb\\
        &\le \liminf_{k\to\infty} \mathfrak{F}(\mathfrak{S}_\del g_{n_k}) + \liminf_{k\to\infty} \frac{\lambda}{2}\int_{\Om}\Ga(\xb)| g_{n_k} (\xb)|^2 d\xb\\
        &\le \liminf_{k\to\infty} \left(\mathfrak{F}(\mathfrak{S}_\del g_{n_k}) + \frac{\lambda}{2}\int_{\Om}\Ga(\xb)| g_{n_k} (\xb)|^2 d\xb\right)=\lim_{k\to \infty} j(g_{n_k}) =j_0.
    \end{split}\]
    Then the existence of minimizer is proved. The uniqueness of the optimal pair follows from the fact that $j$ is strictly convex because $\mathfrak{F}$ is convex and $\lambda > 0$.

{\bf Compactness:} Having shown that, for every horizon \(\del> 0\), the optimal control problem has a unique solution \((\overline{u_\del},\overline{ g _\del})\), we now study the behavior of the pair as \(\del\to 0\). 
Since $\overline{u_\del}=\fS_\del \overline{g_\del}$, by \eqref{eq:sol_op_energy_norm} and \eqref{eq:sol_op_L2_norm} one obtains 
\[
|\overline{u_\del}|_{\cS^{\nub}_{w_\del}(\Om)}\le \frac{C(\del_0)}{\mu} \| \overline{ g _\del}\|_{L^2(\Om)}\quad\text{and}\quad \|\overline{u_\del}\|_{L^2(\Om)}\le \frac{C(\del_0)^2}{\mu} \| \overline{ g _\del}\|_{L^2(\Om)}.
\]
    Therefore, using \eqref{eq:bound_Z_ad} we have
    \[\sup_{\del\in (0,\del_0)}\|\overline{u_\del}\|_{\cS^{\nub}_{w_\del}(\Om)}\le \sqrt{1+C(\del_0)^2}\frac{C(\del_0)}{\mu}B.\]
    By \Cref{thm:cpt_full}
    one concludes that, up to a subsequence, \(\overline{u_\del}\) converges strongly in \(L^2(\Om)\) to some \(u\in H^1_0(\Om)\).  
    Also, since \(Z_{\mathrm{ad}}\) is weakly sequentially compact, then there exists \( g \in Z_{\mathrm{ad}}\) such that \( \overline{ g _\del}\rightharpoonup  g \) in \(L^2(\Om)\) up to a subsequence. Without loss of generality, we assume \(\overline{u_\del}\to u\) in \(L^2(\Om)\) and \(\overline{ g _\del}\rightharpoonup  g\) in \(L^2(\Om)\).

{\bf Convergence:} Next we show that $(u, g)$ solves 
\[B_0(u,v)=\langle  g ,v\rangle,\quad\forall v\in H^1_0(\Om).\]
By density, it suffices to show this for all $v\in C^\infty_c(\Om)$. Fix $v\in C^\infty_c(\Om)$. On the one hand, due to the weak convergence \( \overline{ g _\del}\rightharpoonup  g \) in \(L^2(\Om)\) it holds that $\langle \overline{g_\delta}, v\rangle \to \langle g, v\rangle$ as $\del\to 0$. On the other hand, arguing as in the proof of (Cii) for \Cref{thm:convergence_delta} one knows
    \begin{equation}\label{eq:lim_bilinear_form-51}
        (A\mathfrak{G}^{\nub}_{w_\del}\overline{u_\delta},\mathcal{G}^{\nub}_{w_\del}v)_{L^2(\R^d;\R^d)}\to (A\nabla u,\nabla v)_{L^2(\R^d;\R^d)},\quad \del\to 0.
    \end{equation}
    Therefore, letting $\del\to 0$ in
    \[B_\del(\overline{u_\del},v)=\langle \overline{g_\del},v\rangle\]
    finishes the proof of convergence.

{\bf Limit point is the optimal solution: } Finally, we show that the limit point pair $(u, g)$ is an optimal pair in the sense that 
\[
I(u, g) \leq I(v, f)
\]
where $(v, f)\in H^{1}_{0}(\Omega)\times Z_{\mathrm{ad}}$ solves $ B_0(v,\phi) =\langle  f ,\phi\rangle,\  \forall \phi\in H^1_0(\Om)$. Given such a pair $(v, f)\in H^{1}_{0}(\Omega)\times Z_{\mathrm{ad}}$, we denote $v_\del:=\fS_\del f\in \mathcal{S}^{\nub}_{w_{\delta}}(\Om)$ for $\del\in (0,\del_0)$.
Since $\|v_\del\|_{\mathcal{S}^{\nub}_{w_{\delta}}(\Om)} \leq C\|f\|_{L^{2}(\Omega)}$, by compactness, \Cref{thm:cpt_full}, as $\del\to 0$, up to a subsequence, $v_{\delta}$ strongly converges in $L^{2}(\Omega)$ to some function belonging to $H^{1}_{0}(\Omega)$, and applying a similar argument as in the previous step, we can in fact show that   
\(v_\del\to v\) in \(L^2(\Om)\). By \cite[Theorem 4.9]{brezis2011functional} we may take a further subsequence,  such that for some $h(\xb) \in L^2$, $|v_\delta(\xb)| \leq h(\xb)$ for all $\delta$, and $v_\delta\to v$ almost everywhere.    We then have by item (3) of the assumption on \(F\) that 
\[
|F(\xb,v_\del(\xb))|\leq C |h(\xb)|^2 + \ell(\xb)\textrm{ and  }   F(\xb,v_\del(\xb)) \to F(\xb,v(\xb)) \quad \textrm{ a.e. } \xb\in \Omega.
\]
By Lebegue dominated convergence thereom, we have \[\lim_{\del\to 0} \int_{\Om}F(\xb,v_\del(\xb))d\xb=\int_{\Om} F(\xb,v(\xb))d\xb.\]
    Thus we have  \[\lim_{\del\to 0} I(v_\del, f )=I(v, f ).\] 
    Recall that \(\overline{ g _\del}\rightharpoonup  g\) in \(L^2(\Om)\). Using a density argument as before, one can show that $\sqrt{\Ga}\overline{g_\del}\rightharpoonup \sqrt{\Ga}g$ in $L^2(\Om)$ as $\del\to 0$. Then it follows that 
    \[\int_{\Om} \Ga(\xb)|g(\xb)|^2d\xb\le \liminf_{\del\to 0}\int_{\Om} \Ga(\xb)|\overline{ g _\del}(\xb)|^2d\xb\] and
    \[\begin{split}
        I(u,g)&=\int_{\Om} F(\xb,u(\xb))d\xb +\frac{\lambda}{2}\int_{\Om} \Ga(\xb)|g(\xb)|^2d\xb\\
        &\le \liminf_{\del\to 0} \left(\int_{\Om} F(\xb,\overline{u_\del}(\xb))d\xb +\frac{\lambda}{2}\int_{\Om} \Ga(\xb)|\overline{ g _\del}(\xb)|^2d\xb\right)\\
        &=\liminf_{\del\to 0} I(\overline{u_\del},\overline{ g _\del}),
    \end{split}\]
    as \(\overline{u_\del}\to u\) in \(L^2(\Om)\) and \(\overline{ g _\del}\rightharpoonup  g\) in \(L^2(\Om)\). Since \((\overline{u_\del},\overline{ g _\del})\) is a minimizer of $I$ subject to the nonlocal state equation, we have that \(I(\overline{u_\del},\overline{ g _\del})\le I(v_\del, f )\). Combining all the above inequalities yields 
    \[I(u,g)\le \liminf_{\del\to 0} I(\overline{u_\del},\overline{ g _\del})\le \lim_{\del\to 0}I(v_\del, f )=I(v, f ).\]
    Having shown now that $(u, g)$ is an optimal solution solution we will use the bar notation and write $(\overline{u}, \overline{g})$.
    This finishes the proof.
   \end{proof}

\subsection{First Order Optimality and Asymptotic Compatibility}

As we have seen in the proof of \Cref{conts-summary-optimal}, one can rewrite optimal control problem  \eqref{prob:non_con}-\eqref{nonloc_state_eq} as a variational problem involving the reduced objective functional \(j( g )\) defined in \eqref{eq:def_reduced_objective_functional_j}: find  \(\overline{ g _\del}\in Z_{\mathrm{ad}}\) such that
\begin{equation}\label{eq:non_con_reduced}
    j(\overline{ g _\del})=\min_{ g \in Z_{\mathrm{ad}}} j( g ).
\end{equation}
It is clear that 
if \(\overline{ g _\del}\) is the unique solution to \eqref{eq:non_con_reduced}, then \((\overline{u_\del},\overline{ g _\del})\) is the unique solution to \eqref{prob:non_con}-\eqref{nonloc_state_eq} where \(\overline{u_\del}:=\mathfrak{S}_\del\overline{ g _\del}\), and conversely, if \((\overline{u_\del},\overline{ g _\del})\) is the unique solution to \eqref{prob:non_con}-\eqref{nonloc_state_eq} then \(\overline{ g _\del}\) is the unique solution to \eqref{eq:non_con_reduced}.

By \cite[Lemma~2.21]{troltzsch2010optimal}, the first order optimality condition of the minimization problem \eqref{eq:non_con_reduced} is 
\[\langle j'(\overline{ g _\del}), q -\overline{ g _\del}\rangle\ge 0,\quad\forall  q \in Z_{\mathrm{ad}}.\] 
By calculating \(j'(\overline{ g _\del})\) the optimality condition can be written as 
\begin{equation}
    \langle \mathfrak{S}_\del^*  F_\xi (\cdot,\mathfrak{S}_\del\overline{ g _\del}(\cdot))+\lambda\Ga \overline{ g _\del}, q -\overline{ g _\del}\rangle\ge 0,\quad\forall  q \in Z_{\mathrm{ad}}.
\end{equation}
where \(\mathfrak{S}_\del^*:L^2(\Om)\to L^2(\Om)\) is the adjoint of \(\mathfrak{S}_\del\) in the \(L^2\)-sense. It is immediate that  \(\mathfrak{S}_\del\) is self-adjoint. Also, the map \( F_\xi (\cdot,\mathfrak{S}_\del\overline{ g _\del}(\cdot)):=(\xb\mapsto F_\xi (\xb,\mathfrak{S}_\del\overline{ g _\del}(\xb)))\in L^2(\Om)\) so that \(\mathfrak{S}_\del^*\) can act on it. Indeed, by assumption \(| F_\xi (\xb,\xi)|\le C|\xi|  + e(\xb)\), one has \[| F_\xi (\xb,\mathfrak{S}_\del\overline{ g _\del}(\xb))|\le C|\mathfrak{S}_\del\overline{ g _\del}(\xb)| + e(\xb)\in L^2(\Om).\]
By introducing a new notation \(\overline{ p _\del}\), the above equation can be rewritten as the system
\begin{equation}\label{eq:opt_cond}
    \left\{\begin{aligned}
        \langle \overline{ p _\del} + \lambda \Ga\overline{ g _\del}, q  - \overline{ g _\del} \rangle &\ge 0,\quad\forall  q \in Z_{\mathrm{ad}},\\
        \overline{ p _\del} &= \mathfrak{S}_\del^*  F_\xi (\cdot,\overline{u_\del}(\cdot)),\\
        \overline{u_\del} &= \mathfrak{S}_\del\overline{ g _\del}.
    \end{aligned}\right.
\end{equation}
Note that if \(\Ga=1\), then the first inequality amounts to the following identity
\begin{equation}
    \overline{ g _\del}=-\frac{1}{\lambda}\Pi_{Z_{\mathrm{ad}}}\overline{ p _\del},
\end{equation}
where \(\Pi_E:L^2(\Om)\to E\) denotes the \(L^2\)-projection onto the bounded, convex and closed set \(E\subset L^2(\Om)\), that is, for any \( f \in L^2(\Om)\), \(\Pi_E f \) is the unique solution to the minimization problem 
\[\min_{ g \in E}\| g - f \|_{L^2(\Om)}.\]
Since \(\mathfrak{S}_\del\) is self-adjoint, one obtains that \(\overline{ p _\del}=\mathfrak{S}_\del  F_\xi (\cdot,\overline{u_\del}(\cdot))\).

Note that the objective functional is strictly convex, the first order necessary condition is also sufficient. Therefore, we have the following proposition.
\begin{proposition}[Optimality Conditions]
    For every \(\del> 0\), the pair \((\overline{u_\del},\overline{ g _\del})\in \cS^{\nub}_{w_\del}(\Om)\times Z_{\mathrm{ad}}\) is a solution to \eqref{prob:non_con}-\eqref{nonloc_state_eq} if and only if \eqref{eq:opt_cond} holds.
\end{proposition}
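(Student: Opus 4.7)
The plan is to reduce the constrained problem \eqref{prob:non_con}-\eqref{nonloc_state_eq} to the unconstrained minimization \eqref{eq:non_con_reduced} over the convex set $Z_{\mathrm{ad}}$, and then apply the standard first-order optimality condition for convex differentiable functionals on convex sets.

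First, I would observe that the map $g \mapsto (\mathfrak{S}_\del g, g)$ gives a one-to-one correspondence between $Z_{\mathrm{ad}}$ and the admissible pairs for \eqref{prob:non_con}-\eqref{nonloc_state_eq}, and that under this correspondence $I(\mathfrak{S}_\del g, g) = j(g)$. Hence $(\overline{u_\del}, \overline{g_\del})$ solves \eqref{prob:non_con}-\eqref{nonloc_state_eq} if and only if $\overline{g_\del}$ solves \eqref{eq:non_con_reduced} with $\overline{u_\del} = \mathfrak{S}_\del \overline{g_\del}$.

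Next, I would verify that $j:L^2(\Om)\to\R$ is Fr\'echet differentiable with derivative
\[
j'(g) = \mathfrak{S}_\del^*\,F_\xi(\cdot,\mathfrak{S}_\del g(\cdot)) + \lambda \Gamma g.
\]
The growth assumption \eqref{eq:F_xi_est} ensures that $\xb\mapsto F_\xi(\xb,\mathfrak{S}_\del g(\xb))$ belongs to $L^2(\Om)$, so the right-hand side is a well-defined element of $L^2(\Om)$ on which $\mathfrak{S}_\del^*$ acts. The derivative of the quadratic term $\tfrac{\lambda}{2}\int_\Omega \Gamma |g|^2$ is immediate; the derivative of $g\mapsto \mathfrak{F}(\mathfrak{S}_\del g)$ is computed via the chain rule by combining the linearity of $\mathfrak{S}_\del$ with the pointwise differentiability of $F$ in its second argument, and by passing to the adjoint.

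Then I would invoke strict convexity of $j$: convexity of $g\mapsto \mathfrak{F}(\mathfrak{S}_\del g)$ follows from convexity of $F(\xb,\cdot)$ and linearity of $\mathfrak{S}_\del$, while the term $\tfrac{\lambda}{2}\int_\Omega \Gamma |g|^2$ is strictly convex because $\lambda>0$ and $\Gamma$ is positive. By \cite[Lemma 2.21]{troltzsch2010optimal}, applied to the convex differentiable functional $j$ on the nonempty closed convex set $Z_{\mathrm{ad}}\subset L^2(\Om)$, the condition $\langle j'(\overline{g_\del}), q - \overline{g_\del}\rangle\ge 0$ for all $q\in Z_{\mathrm{ad}}$ is both necessary and sufficient for $\overline{g_\del}$ to solve \eqref{eq:non_con_reduced}. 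Finally, rewriting this inequality with the abbreviation $\overline{p_\del} := \mathfrak{S}_\del^* F_\xi(\cdot,\overline{u_\del}(\cdot))$ and $\overline{u_\del} = \mathfrak{S}_\del \overline{g_\del}$ yields exactly the system \eqref{eq:opt_cond}, completing the equivalence.

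The proof is essentially routine given the framework already laid out; the only point that demands care is justifying the explicit form of $j'$, in particular that the chain rule and adjoint identity apply with $F_\xi(\cdot,\mathfrak{S}_\del g(\cdot))\in L^2(\Om)$, which is handled by \eqref{eq:F_xi_est} together with the $L^2$-boundedness of $\mathfrak{S}_\del$ from \eqref{eq:sol_op_L2_norm}.
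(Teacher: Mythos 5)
Your proposal is correct and follows essentially the same route as the paper: reduce to the unconstrained problem \eqref{eq:non_con_reduced}, compute $j'$ using the growth bound \eqref{eq:F_xi_est} and $L^2$-boundedness of $\mathfrak{S}_\del$, invoke \cite[Lemma~2.21]{troltzsch2010optimal} for necessity, and use strict convexity of $j$ for sufficiency. The paper in fact presents this argument inline just before stating the proposition rather than as a separate proof block, so your write-up matches its reasoning step for step.
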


The optimality conditions for the nonlocal discrete problem read:
\begin{equation}\label{eq:disc_non_opt_cond}
    \left\{
        \begin{aligned}
            \langle \overline{ p _{\del,h}} + \lambda\Ga \overline{ g _{\del,h}}, q _h-\overline{ g _{\del,h}}\rangle &\ge 0,\quad\forall q _h\in Z_{\mathrm{ad}}\cap Z_h,\\
            \overline{ p _{\del,h}} &= \mathfrak{S}_{\del,h}  F_\xi (\cdot,\overline{u_{\del,h}}(\cdot)),\\
            \overline{u_{\del,h}} &= \mathfrak{S}_{\del,h}\overline{ g _{\del,h}},
        \end{aligned}
    \right.
\end{equation}
where \(\mathfrak{S}_{\del,h}:L^2(\Om)\to L^2(\Om)\) is the discrete solution operator, and \(\mathfrak{S}_{\del,h}^*:L^2(\Om)\to L^2(\Om)\) is its \(L^2\)-adjoint operator. Note that in the second equation we used the fact that \(\mathfrak{S}_{\del,h}\) is self-adjoint in the $L^2$ sense. 

Similarly, the continuous local problem is well-posed if and only if the corresponding optimality conditions hold:
\begin{equation}\label{eq:con_loc_opt_cond}
    \left\{\begin{aligned}
        \langle \overline{ p } + \lambda \Ga\overline{ g }, q  - \overline{ g } \rangle &\ge 0,\quad\forall  q \in Z_{\mathrm{ad}},\\
        \overline{ p } &= \mathfrak{S}_0  F_{\xi} (\cdot,\overline{u}(\cdot)),\\
        \overline{u} &= \mathfrak{S}_0 \overline{ g },
    \end{aligned}\right.
\end{equation}
where \(\mathfrak{S}_0:L^2(\Om)\to L^2(\Om)\) is the solution operator defined as \(\mathfrak{S}_0 g :=u\) where \(u\in H^1_0(\Om)\subset L^2(\Om)\) is the unique solution to \eqref{loc_state_eq}. Here again we used \(\mathfrak{S}_0\) is self-adjoint in the second equation.

Finally we state and prove the results for asymptotic compatibility. 
We first recall \cite[Definition~7.1]{mengesha2023optimal} which is the definition of asymptotic compatibility of a scheme to the optimal control problem.
\begin{definition}[Asymptotic compatibility]\label{def:AC}
    We say that the family of solutions \(\{(\overline{u_{\del,h}},\overline{ g _{\del,h}})\}_{h>0,\del>0}\) to \eqref{prob:non_disc} is \textbf{asymptotically compatible} in \(\del,h>0\) if for any sequences \(\{\del_k\}_{k=1}^\infty\), \(\{h_k\}_{k=1}^\infty\) with \(\del_k,h_k\to 0\), we have that \(\overline{u_{\del_k,h_k}}\to \overline{u}\) strongly in \(L^2(\Om)\) and \(\overline{ g _{\del_k,h_k}}\rightharpoonup \overline{ g }\) weakly in \(L^2(\Om)\). Here \((\overline{u},\overline{ g })\in H^1_0(\Om)\times Z_{\mathrm{ad}}\) denotes the optimal solution for \eqref{prob:loc_con}.
\end{definition}

Recall that in the proof of \Cref{thm:convergence_delta} we have checked the conditions (A)-(D) hold to prove asymptotic compatibility for parameterized problems. 
It turns out that for the optimal control \eqref{prob:non_disc} we can establish similar asymptotic compatibility as well in the sense of \Cref{def:AC}. Such type of result is proved in \cite[Theorem~7.3]{mengesha2023optimal}. We demonstrate below that the parameterized discrete optimal control problem, under the constraint of the nonlocal state equation we consider here, has similar compatibility behavior.   The following is the main result of the subsection. Its proof is similar to that of \cite[Theorem~7.3]{mengesha2023optimal} with appropriate modification to fit our setting. 
\begin{theorem}[Asymptotic compatibility]\label{thm:AC}
    The solution to \eqref{prob:non_disc} is asymptotically compatible in \(\del,h>0\), in the sense of \Cref{def:AC}.
\end{theorem}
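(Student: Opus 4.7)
The plan is to adapt the direct method used in the proof of \Cref{conts-summary-optimal}, replacing the continuous limits there with joint limits in $(\del,h)$ and incorporating the discrete approximation spaces $X_{\del,h}$ and $Z_h\cap Z_{\mathrm{ad}}$. Fix arbitrary sequences $\del_k,h_k\to 0$. First I would establish the uniform bounds: since $\overline{g_{\del_k,h_k}}\in Z_{\mathrm{ad}}$, \eqref{eq:bound_Z_ad} gives $\|\overline{g_{\del_k,h_k}}\|_{L^2(\Om)}\le B$, and since the uniform Poincar\'e inequality \Cref{thm:unifPoincare} transfers to the conforming discrete operator $\fS_{\del,h}$, the estimate analogous to \eqref{eq:sol_op_energy_norm} applied to $\overline{u_{\del_k,h_k}}=\fS_{\del_k,h_k}\overline{g_{\del_k,h_k}}$ yields $\sup_k \|\overline{u_{\del_k,h_k}}\|_{\cS^{\nub}_{w_{\del_k}}(\Om)}<\infty$. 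Now \Cref{thm:cpt_full} furnishes a (non-relabelled) subsequence along which $\overline{u_{\del_k,h_k}}\to u^*$ strongly in $L^2(\Om)$ with $u^*\in H^1_0(\Om)$, while weak sequential compactness of the closed, bounded, convex set $Z_{\mathrm{ad}}$ in $L^2(\Om)$ yields $\overline{g_{\del_k,h_k}}\rightharpoonup g^*$ for some $g^*\in Z_{\mathrm{ad}}$.

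Second, I would identify $(u^*,g^*)$ as a solution pair of the local state equation $B_0(u^*,v)=\langle g^*,v\rangle$ for all $v\in H^1_0(\Om)$. Given $\phi\in C^\infty_c(\Om)$, assumption (Dii) verified in the proof of \Cref{thm:convergence_delta} produces $\phi_k\in X_{\del_k,h_k}$ with $\phi_k\to\phi$ in $H^1(\R^d)$; testing \eqref{disc_nonloc_state_eq} against $\phi_k$ gives $B_{\del_k}(\overline{u_{\del_k,h_k}},\phi_k)=\langle \overline{g_{\del_k,h_k}},\phi_k\rangle$. The right-hand side converges to $\langle g^*,\phi\rangle$ by weak--strong pairing. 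For the left-hand side I would repeat the argument used to verify (Cii) in the proof of \Cref{thm:convergence_delta}: \Cref{prop:locallim_ptws_lp} ensures $\mathcal{G}^{\nub}_{w_{\del_k}}\phi_k\to\nabla\phi$ and $\mathcal{D}^{-\nub}_{w_{\del_k}}\psib\to\divv\psib$ strongly in $L^2$ for smooth $\psib$, and then integrating by parts together with the uniform energy bound yields $\mathfrak{G}^{\nub}_{w_{\del_k}}\overline{u_{\del_k,h_k}}\rightharpoonup\nabla u^*$ weakly in $L^2(\R^d;\R^d)$, producing the desired convergence of the bilinear forms.

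Third, and this is where the main obstacle lies, I would show $(u^*,g^*)$ is optimal for \eqref{prob:loc_con}--\eqref{loc_state_eq}. Given any admissible $(v,f)\in H^1_0(\Om)\times Z_{\mathrm{ad}}$ with $v=\fS_0 f$, I would pick a recovery sequence $f_h\in Z_h\cap Z_{\mathrm{ad}}$ (for instance, the $L^2$-projection of $f$ onto piecewise constants followed by truncation into $[\alpha,\beta]$) with $f_{h_k}\to f$ strongly in $L^2(\Om)$, and set $v_{\del_k,h_k}:=\fS_{\del_k,h_k}f_{h_k}$. To pass to the limit I would write
\[
v_{\del_k,h_k}-v=\fS_{\del_k,h_k}(f_{h_k}-f)+(\fS_{\del_k,h_k}f-\fS_0 f),
\]
controlling the first term by the uniform $L^2\to L^2$ operator bound for $\fS_{\del_k,h_k}$ (which is the analog of \eqref{eq:sol_op_L2_norm}) and the second term by \Cref{thm:convergence_delta} with fixed right-hand side $f$, hence $v_{\del_k,h_k}\to v$ strongly in $L^2(\Om)$. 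The chain
\[
I(u^*,g^*)\le\liminf_k I(\overline{u_{\del_k,h_k}},\overline{g_{\del_k,h_k}})\le\liminf_k I(v_{\del_k,h_k},f_{h_k})=I(v,f)
\]
then follows: the first inequality uses dominated convergence for $\mathfrak{F}$ (after passing to a further subsequence with an $L^2$ majorant, as in \Cref{conts-summary-optimal}) together with weak lower semicontinuity of the $\Ga$-weighted quadratic term; the middle inequality is the discrete optimality of $(\overline{u_{\del_k,h_k}},\overline{g_{\del_k,h_k}})$ over $(v_{\del_k,h_k},f_{h_k})$; and the final equality uses the same dominated convergence argument applied to the recovery sequence. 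Strict convexity of $I$ forces $(u^*,g^*)=(\overline{u},\overline{g})$, and since every subsequence of $(\overline{u_{\del_k,h_k}},\overline{g_{\del_k,h_k}})$ admits a further subsequence with this limit, the entire family converges, yielding asymptotic compatibility in the sense of \Cref{def:AC}.
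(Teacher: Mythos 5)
Your argument is correct, but it takes a genuinely different route from the paper. The paper works with the full first-order optimality system \eqref{eq:disc_non_opt_cond}: it extracts subsequential limits of the triple $(\overline{u_{\del_k,h_k}},\overline{g_{\del_k,h_k}},\overline{p_{\del_k,h_k}})$ (including the adjoint state $\overline{p_{\del_k,h_k}}=\fS_{\del_k,h_k}F_\xi(\cdot,\overline{u_{\del_k,h_k}}(\cdot))$), shows via \Cref{thm:cpt_full} that both $\overline{u_{\del_k,h_k}}$ and $\overline{p_{\del_k,h_k}}$ converge strongly in $L^2$, passes the variational inequality to the limit using \Cref{lem:one_sided_approx} and weak lower semicontinuity, and concludes by uniqueness of the solution to the local optimality system \eqref{eq:con_loc_opt_cond}. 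You instead avoid the adjoint state entirely and run a $\Gamma$-convergence-style direct minimization argument: lower semicontinuity of $I$ along the discrete minimizers plus a recovery sequence for an arbitrary competitor $(v,f)=(\fS_0 f, f)$. The trade-off is that the paper's route handles the convexity/optimality structure through the stationarity system (which is the route most amenable to deriving error estimates and mirrors \cite{mengesha2023optimal}), while your route is shorter and more elementary, at the cost of needing the recovery construction $f_{h_k}\to f$ (which is in substance \Cref{lem:one_sided_approx} again) and the intermediate convergence $\fS_{\del_k,h_k}f_{h_k}\to \fS_0 f$ in $L^2$. Two small points to tighten: (i) in your step identifying the state equation, ``repeating the (Cii) argument'' is not quite enough since your test functions $\phi_k$ also vary with $k$; you actually need the three-term splitting from the proof of \Cref{thm:main_convergence}, combining (Cii) with the uniform bound $\|u_{\sig_k,h_k}\|_{\cS_{\sig_k}}\le C$ and $\|\phi-\phi_k\|_{\cH}\to 0$; (ii) when passing to the limit in the $\Gamma$-weighted quadratic term, recall that $\Gamma$ is only in $L^1(\Om)$, so the convergence $\int_\Om\Gamma|f_{h_k}|^2\to\int_\Om\Gamma|f|^2$ should be justified via the pointwise bound $|f_{h_k}|\le\max(|\alpha|,|\beta|)$ and dominated convergence, and the $\liminf$ inequality for $\int_\Om\Gamma|\overline{g_{\del_k,h_k}}|^2$ should go through the weak convergence of $\sqrt{\Gamma}\,\overline{g_{\del_k,h_k}}$ in $L^2$ as the paper does in the proof of \Cref{conts-summary-optimal}.
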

\begin{proof}
    We denote \(\{(\overline{u_k},\overline{ g _k},\overline{ p _k})\}_{k=1}^\infty:=\{(\overline{u_{\del_k,h_k}},\overline{ g _{\del_k,h_k}},\overline{ p _{\del_k,h_k}})\}_{k=1}^\infty\), which is the sequence of triples solving \eqref{eq:disc_non_opt_cond}. 
    We consider an arbitrary, non-relabeled subsequence of the triples \(\{(\overline{u_k},\overline{ g _k},\overline{ p _k})\}_{k=1}^\infty\), and show that it has a further subsequence which always converges to the same limit point. Moreover, this limit solves \eqref{eq:con_loc_opt_cond} and, since this uniquely characterizes the solution to \eqref{prob:loc_con}, asymptotic compatibility will follow.

    Since \(\{\overline{ g_k}\}_{k=1}^\infty\subset Z_{\mathrm{ad}}\), again by \cite[Theorem~2.11]{troltzsch2010optimal} there exists a subsequence (non-relabeled) and a function \( g _*\in Z_{\mathrm{ad}}\) such that \(\overline{ g _{k}}\rightharpoonup g _*\) in \(L^2(\Om)\). Meanwhile, using \eqref{eq:bound_Z_ad}, \eqref{eq:sol_op_energy_norm} and \eqref{eq:sol_op_L2_norm} with $\fS_{\del_k,h_k}$ in place of $\fS_{\del}$ 
    we know there exists a constant \(C_1:=\sqrt{1+C(\del_0)^2}C(\del_0)B/\mu\) such that \(\|\overline{u_k}\|_{\cS^{\nub}_{w_{\del_k}}(\Om)}\le C_1\) for all $k$ large enough. By \Cref{thm:cpt_full}, upon taking a further non-relabeled subsequence, there exists a limit point \(u_*\in H^1_0(\Om)\) such that \(\overline{u_k}\to u_*\) in \(L^2(\Om)\). Since \(\{(\overline{u_k},\overline{ g _k})\}_{k=1}^\infty\) are the pair of functions solving \eqref{prob:non_disc}, we have that
    \[B_{\del_k}(\overline{u_k},v_k)=\langle \overline{ g _k},v_k\rangle,\quad\forall v_k\in X_{\del_k,h_k}.\]
    Arguing as in the last paragraph of the proof of \Cref{thm:main_convergence}, with a slight modification that $(f,v_k)_{L^2}$ and $(f,v)_{L^2}$ are replaced by $\langle \overline{ g _k},v_k\rangle$ and $\langle  g _*,v\rangle$ respectively, one can show that
    \begin{equation}\label{eq:loc_opt_3}
        B_0(u_*,v)=\langle  g _*,v\rangle,\quad\forall v\in H^1_0(\Om).
    \end{equation}
    Recall that \(\overline{ p _k}\) satisfies the second equation in \eqref{eq:disc_non_opt_cond}, i.e., 
    \[B_{\del_k}(\overline{ p _k},v_k)=\langle  F_\xi (\cdot,\overline{u_k}(\cdot)),v_k\rangle,\quad\forall v_k\in X_{\del_k,h_k}.\]
    Since \[\| F_\xi (\cdot,\overline{u_k}(\cdot))\|_{L^2(\Om)}^2\le 2c_2^2\|\overline{u_k}\|_{L^2(\Om)}^2 + 2\|e\|_{L^{2}(\Omega)}^{2}\le 2c_2^2C_1^2 + 2\|e\|_{L^{2}(\Omega)}^{2}=:C_2,\]
    repeating the argument above for the uniform bound on $\|\overline{u_k}\|_{\cS^{\nub}_{w_{\del_k}}(\Om)}$,
    we have \(\|\overline{ p _k}\|_{\cS^{\nub}_{w_{\del_k}}(\Om)}\le C_1\| F_\xi (\cdot,\overline{u_k}(\cdot))\|_{L^2(\Om)}/B\le C_1\sqrt{C_2}/B\) for all $k$ large enough. 
    Moreover, up to a subsequence, using \cite[Theorem~4.9]{brezis2011functional}, item (5) of the assumption on $F$, and the Lebesgue dominated convergence theorem, we have that as \(k\to\infty\),
    \[ F_\xi (\cdot,\overline{u_k}(\cdot))\to  F_\xi (\cdot,u_*(\cdot))\ \text{in}\ L^2(\Om).\]
    Repeating the analysis for \(\{\overline{u_k}\}_{k=1}^\infty\), we identify \( p _*\in H^1_0(\Om)\) such that up to a subsequence \(\overline{ p _k}\to  p _*\) in \(L^2(\Om)\) and 
    \begin{equation}\label{eq:loc_opt_2}
        B_0( p _*,v)=\langle  F_\xi (\cdot,u_*(\cdot)),v\rangle,\quad\forall v\in H^1_0(\Om).
    \end{equation}
    Finally we show that 
    \begin{equation}\label{eq:loc_opt_1}
        \langle  p _* + \lambda \Ga g _*, q  -  g _* \rangle \ge 0,\quad\forall  q \in Z_{\mathrm{ad}}.
    \end{equation}
    Since \(\alpha,\beta\in C(\overline{\Om})\), it is not hard to show the following approximation result: for any \( q \in Z_{\mathrm{ad}}\), there exists a sequence \(\{ q _{k}\in Z_{h_k}\cap Z_{\mathrm{ad}}\}_{h_k>0}\) such that \( q _k\to  q \) in \(L^2(\Om)\) as \(k\to\infty\); see \Cref{lem:one_sided_approx} for its proof. 
    From the first equation of \eqref{eq:disc_non_opt_cond} we know
    \[\langle \overline{ p _{k}} + \lambda\Ga \overline{ g _{k}}, q _k-\overline{ g _{k}}\rangle \ge 0.\]
    That is, 
    \[\langle \overline{ p _k}, q _k-\overline{ g _k}\rangle +\langle \lambda \Ga\overline{ g _k}, q _k\rangle \ge \lambda\int_{\Om}\Ga(\xb)|\overline{ g _k}(\xb)|^2d\xb.\]
    By the weak lower semicontinuity, one has 
    \[\liminf_{k\to\infty} \int_{\Om}\Ga(\xb)|\overline{ g _k}(\xb)|^2d\xb\ge \int_{\Om}\Ga(\xb)| g _*(\xb)|^2d\xb.\]
    On the other hand, letting \(k\to\infty\) one has
    \[\langle \overline{ p _k}, q _k-\overline{ g _k}\rangle +\langle \lambda \Ga\overline{ g _k}, q _k\rangle\to \langle  p _*, q - g _*\rangle +\langle \lambda \Ga g _*, q \rangle.\]
    Therefore, 
    \[\langle  p _*, q - g _*\rangle +\langle \lambda \Ga g _*, q \rangle\ge \lambda\int_{\Om}\Ga(\xb)| g _*(\xb)|^2d\xb,\]
    and \eqref{eq:loc_opt_1} holds. From \eqref{eq:loc_opt_1}, \eqref{eq:loc_opt_2} and \eqref{eq:loc_opt_3} we know that \((u_*, g _*, p _*)\) solves the equations \eqref{eq:con_loc_opt_cond}. Since this system has a unique triple of solution \((\overline{u},\overline{ g },\overline{ p })\), it follows that \((u_*, g _*, p _*)=(\overline{u},\overline{ g },\overline{ p })\). Therefore, asymptotic compatibility holds.
\end{proof}

\section{Conclusion}\label{sec:conclusion}
In this work we establish two Bourgain-Brezis-Mironescu 
\cite{bourgain2001another} type compactness theorems regarding the nonlocal Sobolev spaces associated with half-space gradient operators \cite{han2023nonlocal}. An equivalent Fourier characterization of the nonlocal function spaces is shown, which highlights the key role of the Fourier symbol \(\lab^{\nub}_w(\xib)\) in the studies of nonlocal function spaces \(\cS^{\nub}_w(\Om)\). Using an improved lower bound estimate on \(|\lab^{\nub}_w(\xib)|\), we are able to show the locally compact embedding of \(\cS^{\nub}_w(\R^d)\) into \(L^2(\R^d)\) for a nonintegrable kernel \(w\). For the sequence of kernels $\{w_n \}$ such that $0\leq w_n \nearrow w$ where $w$ is nonintegrable, we prove the sequential compactness result in \Cref{thm:cpt_seq_nonint_kernel}. Another compactness result holds for a different sequence of kernels satisfying \Cref{ass:kernel4cptthm}, while the result is shown in \Cref{thm:cpt_full}. Based on these compactness theorems we show the uniform Poincar\'e inequalities generalizing the results in \cite{han2023nonlocal}, and study the convergence and asymptotic compatibility of nonlocal diffusion problems and their approximations, respectively. Finally we apply the theoretical results to optimal control problems with nonlocal equations as constraints.

A key tool to show \(L^p\)-compactness is the Riesz-Kolmogorov-Fr\'echet compactness criterion. Here, however, we can only show compactness for \(p=2\) as in such case one can use Plancherel's theorem to convert the \(L^2\) norm of a function into that of its Fourier transform. It is intriguing to explore whether the compactness result holds for general \(1<p<\infty\). For \(p=2\), one crucial ingredient we frequently use is the lower bound estimate of \(|\lab^{\nub}_w(\xib)|\). For large \(|\xib|\) the lower bound has been significantly improved compared to \cite[Lemma~5.1]{han2023nonlocal}. 
For small \(|\xib|\), we apply the technique of truncation at infinity to make $w$ compactly supported so that \Cref{lem:labbdd_original}(2)(a) is applicable to derive
\(|\lab^{\nub}_w(\xib)|\ge C|\xib|\) for \(|\xib|\) small. 

Note that \Cref{ass:kernel4cptthm} provides sufficient conditions to derive the crucial lower bound estimates for the Fourier symbols associated with a sequence of kernels in \Cref{lem:lowerbd_im_re}. However, it is worth exploring whether these assumptions can be relaxed.
An open question remains: can the compactness result in \Cref{thm:cpt_full} be established under the weakest assumptions, specifically \cref{eq:kernelassumption,eq:kernelassumption_2,eq:kernelassumption_3}? To illustrate, consider a kernel defined by
\[
w_\del(\zb)=\frac{2d}{\om_{d-1}}\frac{1}{|\log(\del)|}\frac{1}{|\zb|^{d+1}}\chi_{\{\del<|\zb|<1\}}(\zb). 
\]
This kernel appeared in \cite[Corollary~2.3]{ponce2004estimate} has the feature that it is truncated near origin. Our numerical results show that both the real part and imaginary part of the Fourier symbol are oscillating but eventually the 
real part stabilizes at a positive value while the imaginary part converges to zero. However, we could not prove the compactness results because we were unable to determine the critical value $|\xib|$ such that we use the lower bound of imaginary and real part separately across this critical value. We nevertheless conjecture that  \Cref{thm:cpt_full}  will remain valid under the assumptions \cref{eq:kernelassumption,eq:kernelassumption_2,eq:kernelassumption_3} and hope to explore this in future work.

\appendix
\section{}

\subsection{Proofs of \Cref{lem:lablowerbdd_xilarge} and \Cref{prop:locallim_ptws_lp}}
\label{subsec:Appendix1}
\begin{proof}[Proof of \Cref{lem:lablowerbdd_xilarge}]
    Recall from the proof of \cite[Lemma~5.1]{han2023nonlocal} that 
    \[|\Re(\lab^{\nub}_{w})(\xib)|\ge \int_{z_1>0}\frac{z_1}{|\zb|}w(\zb)(1-\cos(2\pi R^T_{\nub}\xib\cdot\zb))d\zb,\]
    where \(R_{\nub}\) is an orthogonal matrix such that \(\nub=R_{\nub}\eb_1\). Denote \(\mub:=R^T_{\nub}\xib/|R^T_{\nub}\xib|\). Then
    \[\begin{split}
        |\Re(\lab^{\nub}_{w})(\xib)|&\ge \int_{z_1>0}\frac{z_1}{|\zb|}w(\zb)(1-\cos(2\pi |\xib|\mub\cdot\zb))d\zb\\
        &=:I(\xib).
    \end{split}\]
    Our goal is to find constant \(c_{\lambda}=c_{\lambda}(N\ep,d)>0\) such that \begin{equation}\label{estI}
        I(\xib)\ge c_{\lambda}\int_{|\zb|>\frac{N\ep}{|\xib|}}w(\zb)d\zb,\quad\forall |\xib|>N,\,\xib\in\R^d.
    \end{equation}

    We distinguish two cases \(d=1\) and \(d\ge 2\). 

    \textbf{Case I: \(d=1\)}. Then \(\mu=e_1\) or \(\mu=-e_1\). In either case one has 
    \[I(\xi)= \int_{0}^{\infty} w(z)\left(1-\cos(2\pi|\xi|z)\right)dz.\]
    Since the right-hand side is even in \(\xi\), it suffices to show that for \(\xi>N\), 
    \begin{equation}\label{estI_1d}
        I(\xi)=\int_{0}^{\infty} w(z)\left(1-\cos(2\pi\xi z)\right)dz\ge c_\lambda \int_{\frac{N\ep}{\xi}}^{\infty} w(z)dz.
    \end{equation}
    Fix \(R>1/(4N)+\ep\). Without loss of generality, assume \(N<1/(4\ep)\). On the one hand, using the nonincreasing assumption on \(w\) and \cite[Lemma~2]{du2018stability} we have
    \[\begin{split}
        \int_{\frac{1}{4\xi}}^{R} w(z)\cos(2\pi\xi z)dz&=\frac{1}{2\pi\xi}\int_{\frac{\pi}{2}}^{2\pi \xi R} w\left(\frac{z}{2\pi\xi}\right)\cos(z)dz\\
        &=-\frac{1}{2\pi\xi}\int_{0}^{2\pi \xi R-\frac{\pi}{2}} w\left(\frac{1}{2\pi\xi}\left(z+\frac{\pi}{2}\right)\right)\sin(z)dz\le 0,
    \end{split}\]
    thus 
    \begin{equation}\label{1dest1}
        \begin{split}
            \int_{\frac{1}{4\xi}}^{R} w(z)dz&=\int_{\frac{1}{4\xi}}^{R} w(z)(1-\cos(2\pi\xi z))dz+\int_{\frac{1}{4\xi}}^{R} w(z)\cos(2\pi\xi z)dz\\
            &\le \int_{\frac{1}{4\xi}}^{R} w(z)(1-\cos(2\pi\xi z))dz.
        \end{split}
    \end{equation}
    On the other hand, again by nonincreasing property of \(w\) one has
    \begin{equation}\label{1dest2}
        \begin{split}
            \int_{\frac{N\ep}{\xi}}^{\frac{1}{4\xi}} w(z)dz&= \xi^{-1}\int_{N\ep}^{\frac{1}{4}} w(\xi^{-1}z)dz\le \xi^{-1}\frac{1-4N\ep}{2N\ep} \int_{\frac{N\ep}{2}}^{N\ep} w(\xi^{-1}z)dz\\
            &\le \xi^{-1}\frac{1-4N\ep}{2N\ep}\frac{1}{1-\cos\left(N\ep\right)} \int_{\frac{N\ep}{2}}^{N\ep} w(\xi^{-1}z)(1-\cos(2\pi z))dz\\
            &= \frac{1-4N\ep}{2N\ep}\frac{1}{1-\cos\left(N\ep\right)} \int_{\frac{N\ep}{2\xi}}^{\frac{N\ep}{\xi}} w(z)(1-\cos(2\pi \xi z))dz\\
            &\le \frac{1-4N\ep}{2N\ep}\frac{1}{1-\cos\left(N\ep\right)} \int_{0}^{\frac{1}{4\xi}} w(z)(1-\cos(2\pi\xi z))dz
        \end{split}
    \end{equation}
    Combining \eqref{1dest1} and \eqref{1dest2} yields 
    \[I(\xi)\ge\int_{0}^{R} w(z)\left(1-\cos(2\pi\xi z)\right)dz\ge c_\lambda \int_{\frac{N\ep}{\xi}}^{R} w(z)dz,\quad \xi>N,\]
    where 
    \[c_\lambda=\left(1+\frac{1-4N\ep}{2N\ep\left(1-\cos\left(N\ep\right)\right)}\right)^{-1}.\] 
    Letting \(R\to+\infty\) yields \eqref{estI_1d}.

    \textbf{Case II: \(d\ge 2\)}. Let \(\mub=(\mu_1,\dots,\mu_d)\). 
    Notice that \(I(\xib)\) maintains if \(\mub\) is replaced by \(-\mub\). Without loss of generality, we assume \(\mu_1\ge 0\). Denote \(\mub=(\mu_1,\mub')\) where \(\mub'\in\R^{d-1}\). 

    \textbf{Case II(i): \(\mu_1=0\)}. This corresponds to the case where \(\xib\perp\nub\). Fix \(R>\ep/2\). By doing a change of variable and using the Fubini-Tonelli theorem, one obtains
    \begin{equation}\label{estI_mu1eq0}
        \begin{aligned}
        I(\xib)&\ge\int_{|\zb'|<R} \left(1-\cos(2\pi|\xib|\mub'\cdot\zb')\right)\int_{0}^{\sqrt{R^2-|\zb'|^2}} \frac{z_1}{|\zb|}\overline{w}(|\zb|)dz_1d\zb'\\
        &=\int_{|\zb'|<R} \left(1-\cos(2\pi|\xib|\mub'\cdot\zb')\right)\int_{|\zb'|}^{R} \overline{w}(r)drd\zb'\\
        &=\int_{0}^{R} \overline{w}(r)\int_{|\zb'|<r}\left(1-\cos(2\pi|\xib|\mub'\cdot\zb')\right)d\zb'dr\\
        &\ge\int_{\frac{N\ep}{|\xib|}}^{R} \overline{w}(r)\int_{|\zb'|<r}\left(1-\cos(2\pi|\xib|\mub'\cdot\zb')\right)d\zb'dr\\
        &= \int_{\frac{N\ep}{|\xib|}}^{R} \overline{w}(r)\int_{B^{d-1}_r(\bm 0)}\left(1-\cos(2\pi|\xib|y_1)\right)d\yb dr\\
        &= \int_{\frac{N\ep}{|\xib|}}^{R} r^{d-1}\overline{w}(r)\int_{B^{d-1}_1(\bm 0)}\left(1-\cos(2\pi r|\xib|y_1)\right)d\yb dr,
        \end{aligned}
    \end{equation}
    where in the last two steps we used change-of-variable formula. 

    Denote \[J(r):=\int_{B^{d-1}_1(\bm 0)}\left(1-\cos(2\pi ry_1)\right)d\yb.\]
    It suffices to show \(J(r)\) has a positive lower bound \(c_J=c_J(N\ep,d)\) over \(\left[N\ep,\infty\right)\) to prove \eqref{estI}. Indeed, using \eqref{estI_mu1eq0} and this positive lower bound one obtains 
    \[I(\xib)\ge \int_{\frac{N\ep}{|\xib|}}^{R} r^{d-1}\overline{w}(r)J(r|\xib|)dr\ge c_J\int_{\frac{N\ep}{|\xib|}}^{R} r^{d-1}\overline{w}(r)dr=\frac{c_J}{\om_{d-1}}\int_{\frac{N\ep}{|\xib|}<|\zb|<R}w(\zb)d\zb\]
    for any \(|\xib|>N\) with \(\xib\in\R^d\). Letting \(R\to+\infty\) gives \eqref{estI}.
    Now we focus on the proof of \(J(r)\ge c_J\) for \(r\ge N\ep\).

    Note that for any \(r>0\), \(J(r)>0\) as the integrand is nonnegative. By a change of variable, one obtains
    \[\begin{split}
        J(r)&=\int_{0}^{1} (1-\cos(2\pi r y_1)) \int_{|\yb'|<\sqrt{1-y_1^2}} d\yb'dy_1\\
        &=V_{d-2}\int_{0}^{1} (1-\cos(2\pi r y_1))(1-y_1^2)^{\frac{d-2}{2}}dy_1\\
        &\ge V_{d-2}\left(\frac{\sqrt{3}}{2}\right)^{d-2}\int_{0}^{\frac{1}{2}} (1-\cos(2\pi r y))dy\\
        &\to V_{d-2}\left(\frac{\sqrt{3}}{2}\right)^{d-2}\frac{1}{2}>0,\quad r\to +\infty,
    \end{split}\]
    where in the last step we used Riemann-Lebesgue lemma. Since \(J(r)\) is continuous, there exists \(c_J=c_J(N\ep,d)>0\) such that \(J(r)\ge c_J\) for \(r\ge N\ep\). 
    This concludes the proof of Case II(i).

    \textbf{Case II(ii): \(\mu_1>0\)}. By \Cref{prop:orthobasis} in \Cref{subsec:Appendix2}, there exist \(\vb_1,\dots,\vb_{d-1}\in\S^{d-1}\) such that 
    \[\vb_k^{(1)}:=\vb_k\cdot \eb_1\ge 0,\quad 1\le k\le d-1,\]
    and 
    \(\vb_1,\dots,\vb_{d-1},\mub\) form an orthonormal basis for \(\R^d\). Moreover,  
    \[\vb_k^{(1)}=\frac{\mu_1^{k-1}|\mu_{k+1}|}{\sqrt{\mu_1^2+\cdots+\mu_k^2}\sqrt{\mu_1^2+\cdots+\mu_{k+1}^2}},\quad 1\le k\le d-1.\]
    Denote \(A_{\mub}:=(\vb_1,\dots,\vb_{d-1},\mub)\). Then \(A_{\mub}\) is a \(d\times d\) orthogonal matrix. Consider the following change of variable \(\zb=A_{\mub}\yb\) where \(\yb\in\R^d\). Then \[z_1=\mu_1 y_d+\sum_{k=1}^{d-1} \vb_k^{(1)}y_k>0\] 
    if \(y_1,\dots,y_d>0\). Thus, for \(R>1\),
    \begin{equation}\label{changeztoy}
        \begin{aligned}
            I(\xib)&\ge \int_{z_1>0,\,|\zb|<R}\frac{z_1}{|\zb|}w(\zb)(1-\cos(2\pi |\xib|\mub\cdot\zb))d\zb\\
            &\ge \int_{y_1>0,\dots,y_d>0,\,|\yb|<R} \left(\mu_1y_d + \sum_{k=1}^{d-1}\vb_k^{(1)}y_k\right)\frac{1-\cos(2\pi|\xib|y_d)}{|\yb|}w(\yb)d\yb.
        \end{aligned}
    \end{equation}

    \textbf{Case II(ii)(a):}
    If \(\mu_1\) is the largest among \(\{\vb_1^{(1)},\dots,\vb_{d-1}^{(1)},\mu_1\}\), then \(\mu_1\ge 1/\sqrt{d}\) and \eqref{changeztoy} yields
    \[\begin{split}
        I(\xib)&\ge \frac{1}{\sqrt{d}}\int_{y_1>0,\dots,y_d>0,\,|\yb|<R} y_d \frac{1-\cos(2\pi|\xib|y_d)}{|\yb|}w(\yb)d\yb\\
        &=\frac{1}{\sqrt{d}}\int_{0}^{R} y_d(1-\cos(2\pi|\xib|y_d))\int_{y_1>0,\dots,y_{d-1}>0,\,y_1^2+\dots+y_{d-1}^2<R^2-y_d^2}\frac{w(\yb)}{|\yb|}d\tilde{\yb}dy_d
    \end{split}\]
    where \(\tilde{\yb}=(y_1,\dots,y_{d-1})\). Using polar coordinates one may compute
    \[\begin{split}
        &\int_{y_1>0,\dots,y_{d-1}>0,\,y_1^2+\dots+y_{d-1}^2<R^2-y_d^2}\frac{w(\yb)}{|\yb|}d\tilde{\yb}\\
        = & \frac{1}{2^{d-1}} \int_{|\tilde{\yb}|<\sqrt{R^2-y_d^2}} \frac{1}{(y_d^2+|\tilde{\yb}|^2)^{\frac{1}{2}}}\overline{w}\left((y_d^2+|\tilde{\yb}|^2)^{\frac{1}{2}}\right)d\tilde{\yb}\\
        =& \frac{\om_{d-2}}{2^{d-1}}\int_{0}^{\sqrt{R^2-y_d^2}} \frac{r^{d-2}}{(y_d^2+r^2)^{\frac{1}{2}}}\overline{w}\left((y_d^2+r^2)^{\frac{1}{2}}\right)dr\\
        =& \frac{\om_{d-2}}{2^{d-1}}\int_{y_d}^{R} \overline{w}\left(r\right)\left(r^2-y_d^2\right)^{\frac{d-3}{2}}dr.
    \end{split}\]
    Therefore, 
    \begin{equation}\label{estI_mu1ge0_mu1large}
        \begin{aligned}
            I(\xib)&\ge \frac{1}{\sqrt{d}}\frac{\om_{d-2}}{2^{d-1}} \int_{0}^{R} y_d(1-\cos(2\pi|\xib|y_d))\int_{y_d}^{R} \overline{w}\left(r\right)\left(r^2-y_d^2\right)^{\frac{d-3}{2}}drdy_d\\
            &=\frac{1}{\sqrt{d}}\frac{\om_{d-2}}{2^{d-1}} \int_{0}^{R} \overline{w}(r)\int_{0}^{r} y_d(1-\cos(2\pi|\xib|y_d))(r^2-y_d^2)^{\frac{d-3}{2}} dy_d dr\\
            &=\frac{1}{\sqrt{d}}\frac{\om_{d-2}}{2^{d-1}} \int_{0}^{R} r^{d-1}\overline{w}(r)\int_{0}^{1} t(1-\cos(2\pi r|\xib|t))(1-t^2)^{\frac{d-3}{2}} dt dr\\
            &\ge \frac{1}{\sqrt{d}}\frac{\om_{d-2}}{2^{d-1}} \int_{\frac{N\ep}{|\xib|}}^{R} r^{d-1}\overline{w}(r)H(r|\xib|) dr
        \end{aligned}
    \end{equation}
    where \(H:(0,\infty)\to (0,\infty)\) is defined by
    \begin{equation}\label{defH}
        H(r):=\int_{0}^{1} t(1-\cos(2\pi rt))(1-t^2)^{\frac{d-3}{2}} dt.
    \end{equation}
    It suffices to show there exists \(c_H=c_H(N\ep,d)>0\) such that \(H(r)\ge c_H\) for \(r\ge N\ep\) to show \eqref{estI}. Indeed, from \eqref{estI_mu1ge0_mu1large} and the lower bound \(c_H\) one obtains 
    \[I(\xib)\ge c_H\frac{1}{\sqrt{d}}\frac{\om_{d-2}}{2^{d-1}} \int_{\frac{N\ep}{|\xib|}}^{R} r^{d-1}\overline{w}(r)dr=\frac{c_H}{\om_{d-1}}\frac{1}{\sqrt{d}}\frac{\om_{d-2}}{2^{d-1}}\int_{\frac{N\ep}{|\xib|}<|\zb|<R}w(\zb)d\zb\]
    for any \(|\xib|>N\) with \(\xib\in\R^d\). Letting \(R\to+\infty\) yields \eqref{estI}. Noticing that \(H(r)\) is continuous and positive for any \(r>0\), it suffices to show 
    \[\lim_{r\to +\infty} H(r)>0\]
    to conclude the existence of the positive lower bound \(c_H\). The proof uses again Riemann-Lebesgue lemma as follows.

    When \(d=2\), since the function \(t\mapsto t(1-t^2)^{-\frac{1}{2}}\) is increasing over \((0,1)\), one has
    \[\begin{split}
        H(r)&\ge \frac{1}{\sqrt{3}} \int_{\frac{1}{2}}^{1} (1-\cos(2\pi rt))dt\to \frac{1}{2\sqrt{3}},\quad r\to +\infty. 
    \end{split}\]

    When \(d\ge 3\), since the function \(t\mapsto (1-t^2)^{\frac{d-3}{2}}\) is nonincreasing over \((0,1)\), one has
    \[\begin{split}
        H(r) &\ge \left(\frac{\sqrt{3}}{2}\right)^{d-3}\int_{0}^{\frac{1}{2}} t(1-\cos(2\pi rt))dt\to \left(\frac{\sqrt{3}}{2}\right)^{d-3} \frac{1}{8},\quad r\to +\infty. 
    \end{split}\]
    Then the proof for Case II(ii)(a) is done.

    \textbf{Case II(ii)(b):} 
    If \(\vb_j^{(1)}\) is the largest among \(\{\vb_1^{(1)},\dots,\vb_{d-1}^{(1)},\mu_1\}\) where \(1\le j\le d-1\), then \(\vb_j^{(1)}\ge 1/\sqrt{d}\). Denoting \(\tilde{\yb}:=(y_1,\dots,\hat{y}_j,\dots,y_d)=(\tilde{y}_1,\dots,\tilde{y}_{d-1})\in\R^{d-1}\) (with \(\tilde{y}_{d-1}=y_d\)) we continue from \eqref{changeztoy} to compute 

    \[\begin{split}
        I(\xib)&\ge \frac{1}{\sqrt{d}} \int_{y_1>0,\dots,y_d>0,\,|\yb|<R} y_j \frac{1-\cos(2\pi|\xib|y_d)}{|\yb|}w(\yb)d\yb\\
        &=\frac{1}{\sqrt{d}} \int_{\tilde{y}_1>0,\dots,\tilde{y}_{d-1}>0,\,|\tilde{\yb}|<R} (1-\cos(2\pi|\xib|\tilde{y}_{d-1})) \int_{0}^{\sqrt{R^2-|\tilde{\yb}|^2}} \frac{y_j}{|\yb|}\overline{w}(|\yb|)dy_j d\tilde{\yb}\\
        &=\frac{1}{\sqrt{d}} \int_{\tilde{y}_1>0,\dots,\tilde{y}_{d-1}>0,\,|\tilde{\yb}|<R} (1-\cos(2\pi|\xib|\tilde{y}_{d-1})) \int_{|\tilde{\yb}|}^{R} \overline{w}(r)dr d\tilde{\yb}\\
        &=\frac{1}{\sqrt{d}} \frac{1}{2^{d-1}}\int_{B^{d-1}_{1}(\bm 0)} (1-\cos(2\pi |\xib|z_1))\int_{|\zb|}^R \overline{w}(r)drd\zb\\
        &=\frac{1}{\sqrt{d}} \frac{1}{2^{d-1}}\int_{0}^R\overline{w}(r)\int_{B^{d-1}_{r}(\bm 0)} (1-\cos(2\pi |\xib|z_1))d\zb dr\\
        &\ge \frac{1}{\sqrt{d}} \frac{1}{2^{d-1}}\int_{\frac{N\ep}{|\xib|}}^R r^{d-1}\overline{w}(r)\int_{B^{d-1}_{1}(\bm 0)} (1-\cos(2\pi r|\xib|z_1))d\zb dr.\\
    \end{split}\]
    Note that the above calculation is similar to \eqref{estI_mu1eq0} and the final expression is just a constant multiple of that of \eqref{estI_mu1eq0}. Therefore, following the proof in Case(II)(i) after \eqref{estI_mu1eq0} we complete the whole proof.
\end{proof}

\begin{proof}[Proof of \Cref{prop:locallim_ptws_lp}]
    We first prove the pointwise convergence result and then use generalized dominated convergence theorem to prove the \(L^p\) convergence. For simplicity we focus on \(\cG_\del^{\nub}\).

    Without loss of generality, we may assume that \(u\in C^2_c(\R^d)\) in the proof. The result for \(u\in C^1_c(\R^d)\) can then be obtained using a standard density argument so we omit the details. Since \(u\in C^2_c(\R^d)\), there exists \(R>0\) such that \(\supp u\subset B_{R}(\bm 0)=:B_{R}\). Recall that 
    \[\cG^{\nub}_\del u(\xb)=\int_{\R^d} \chi_{\nub}(\zb)\frac{\zb}{|\zb|}w_\del(\zb)(u(\xb+\zb)-u(\xb))d\zb.\] 
    We discuss two cases \(|\xb|\le 2R\) and \(|\xb|>2R\).

    \textbf{Case (I): }\(|\xb|\le 2R\). Then for \(|\zb|>3R\) one has \(|\xb+\zb|\ge|\zb|-|\xb|>R\) and thus \(u(\xb+\zb)=0\). Note that we have Taylor expansion 
    \[u(\xb+\zb)-u(\xb)= D u(\xb)\zb +\int_{0}^{1} \zb^T D^2u(\xb+t\zb)\zb dt.\]
    Hence,
    \[\begin{split}
        &\cG^{\nub}_\del u(\xb)\\
        =&\int_{B_{3R}} \chi_{\nub}(\zb)\frac{\zb}{|\zb|}w_\del(\zb)(u(\xb+\zb)-u(\xb))d\zb + \int_{B_{3R}^c} \chi_{\nub}(\zb)\frac{\zb}{|\zb|}w_\del(\zb)(-u(\xb))d\zb\\
        = & \left(\int_{B_{3R}} \chi_{\nub}(\zb)|\zb| w_\del(\zb)\frac{\zb}{|\zb|}\otimes \frac{\zb}{|\zb|}d\zb\right) \nabla u(\xb) + \int_{B_{3R}^c} \chi_{\nub}(\zb)\frac{\zb}{|\zb|}w_\del(\zb)(-u(\xb))d\zb\\
        &\quad\quad + \int_{B_{3R}} \chi_{\nub}(\zb)\frac{\zb}{|\zb|}w_\del(\zb)\left(\int_{0}^{1} \zb^T D^2u(\xb+t\zb)\zb dt\right)d\zb \\
        =&: I_1 + I_2 + I_3.
    \end{split}\]
    Since \(w_\del\) is radial, one can compute as in \cite[Remark~2.2]{han2023nonlocal} that
    \[\int_{B_{3R}} \chi_{\nub}(\zb)|\zb| w_\del(\zb)\frac{\zb}{|\zb|}\otimes \frac{\zb}{|\zb|}d\zb=\frac{1}{2d}\left(\int_{B_{3R}}|\zb|w_\del(\zb)d\zb\right) \I_d\to \I_d,\quad\del \to 0^+,\]
    where \(\I_d\) is the \(d\times d\) identity matrix and we used \eqref{eq:kernelassumption_3}. Hence \(I_1\to \nabla u(\xb)\) as \(\del\to 0\).

    For \(I_2\) and \(I_3\), we show that they converge to \(0\) as \(\del\to 0\). Indeed, as \(\del\to 0\), by \cref{eq:kernelassumption_2,eq:kernel_2ndMoment2zero},
    \[|I_2|\le \|u\|_{C^0(\R^d)}\int_{B_{3R}^c} w_\del(\zb)d\zb\to 0\]
    and 
    \[|I_3|\le \|u\|_{C^2(\R^d)}\int_{B_{3R}} |\zb|^2w_\del(\zb)d\zb\to 0.\]
    Combining all the estimates for \(I_1\), \(I_2\) and \(I_3\) yields 
    \[\cG^{\nub}_\del u(\xb)\to \nabla u(\xb),\quad \del\to 0,\ \forall |\xb|<2R.\]
    Moreover, there exist constants \(\del_0\in (0,1)\) and \(C_0=C_0(\|u\|_{C^2(\R^d)})\) such that \(|\cG^{\nub}_\del u(\xb)|\le C_0\) for \(|\xb|\le 2R\) and \(\del\in (0,\del_0)\).
    
    \textbf{Case (II): }\(|\xb|> 2R\). Then \(u(\xb)= 0\) and 
    \[\cG^{\nub}_\del u(\xb) = \int_{\R^d} \chi_{\nub}(\zb)\frac{\zb}{|\zb|}w_\del(\zb)u(\xb+\zb)d\zb=\int_{B_{R}(-\xb)} \chi_{\nub}(\zb)\frac{\zb}{|\zb|}w_\del(\zb)u(\xb+\zb)d\zb.\]
    Then for \(|\xb|>2R\),
    \[|\cG^{\nub}_\del u(\xb) |\le \|u\|_{C^0(\R^d)}\int_{B_R(-\xb)}w_\del(\zb)d\zb.\]
    Define \(g_\del:\R^d\to\R\) by
    \[g_\del(\xb)=\left\{\begin{aligned}
        &C_0,\quad |\xb|\le 2R,\\
        &\|u\|_{C^0(\R^d)} \int_{B_R(-\xb)}w_\del(\zb)d\zb,\quad |\xb|> 2R.
    \end{aligned}\right.\]
    Then \(|\cG^{\nub}_\del u(\xb) |\le g_\del(\xb)\) for any \(\xb\in\R^d\). 
    Note that for \(|\xb|>2R\), \(B_R(-\xb)\cap B_R(\bm 0)=\emptyset\), hence 
    \[|\cG^{\nub}_\del u(\xb) |\le g_\del(\xb)\le \|u\|_{C^0(\R^d)} \int_{B_R^c}w_\del(\zb)d\zb\to 0=\nabla u(\xb),\quad\del\to 0.\]
    Combining the pointwise limit in Case (I) we obtain 
    \[\cG^{\nub}_\del u(\xb)\to \nabla u(\xb),\quad \del\to 0,\ \forall \xb\in\R^d.\]
    It remains to show for \(p\in [1,\infty]\), \(\cG^{\nub}_\del u\to \nabla u\) in \(L^p(\R^d)\). For \(p=\infty\) this follows from the estimates above. Now suppose \(1\le p<\infty\). Notice that \(g_\delta(\xb)\to C_0\chi_{B_{2R}}(\xb)=:g(\xb)\) as \(\del\to 0\) for any \(\xb\in\R^d\), to apply the generalized dominated convergence theorem, we need to show that \(\|g_\del\|_{L^p(\R^d)}\to \|g\|_{L^p(\R^d)}\) as \(\del\to 0\). In fact, we will show \(\|g_\del-g\|_{L^p(\R^d)}\to 0\) as \(\del\to 0\). Note that \[\|g_\del-g\|_{L^p(\R^d)}^p = \|u\|_{C^0(\R^d)}^p \int_{B_{2R}^c} \left(\int_{B_R(-\xb)}w_\del(\zb)d\zb\right)^p d\xb.\] 
    Since \(B_R(-\xb)\subset B_R^c\) for \(|\xb|>2R\), one has
    \[\begin{split}
        &\int_{B_{2R}^c} \left(\int_{B_R(-\xb)}w_\del(\zb)d\zb\right)^p d\xb\\
        =& \int_{B_{2R}^c} \left(\int_{\R^d}\chi_{B_R(-\xb)}(\zb)w_\del(\zb)d\zb\right)^p d\xb \\
        = & \int_{B_{2R}^c} \left(\int_{B_R(-\xb)}w_\del(\zb)d\zb\right)^{p-1}\int_{B_R(-\xb)}w_\del(\zb)d\zb d\xb\\
        \le &\left(\int_{B_R^c}w_\del(\zb)d\zb\right)^{p-1} \int_{B_{2R}^c} \int_{B_R^c}\chi_{B_R(-\zb)}(\xb)w_\del(\zb)d\zb d\xb\\
        = &\left(\int_{B_R^c}w_\del(\zb)d\zb\right)^{p-1} \int_{B_R^c} \int_{B_{2R}^c\cap B_R(-\zb)} d\xb w_\del(\zb)d\zb\\
        \le & |B_R|\left(\int_{B_R^c}w_\del(\zb)d\zb\right)^{p} \to 0,\quad\del\to 0,
    \end{split}\]
    where we used Tonelli-Fubini theorem and \eqref{eq:kernelassumption_2}. This implies \(\|g_\del-g\|_{L^p(\R^d)}\to 0\) as \(\del\to 0\) and by generalized dominated convergence theorem \(\cG^{\nub}_\del u\to \nabla u\) in \(L^p(\R^d)\) for \(p\in [1,\infty)\).

\end{proof}

\subsection{An orthonormal basis in \(\R^d\)}
\label{subsec:Appendix2}
\begin{proposition}\label{prop:orthobasis}
    Let \(d\ge 2\) and \(\mub=(\mu_1,\dots,\mu_d)\in\S^{d-1}\) with \(\mu_1>0\). Then there exist \(\vb_1,\dots,\vb_{d-1}\in\S^{d-1}\) such that 
    \[\vb_k^{(1)}:=\vb_k\cdot \eb_1\ge 0,\quad 1\le k\le d-1,\]
    and 
    \(\vb_1,\dots,\vb_{d-1},\mub\) form an orthonormal basis for \(\R^d\). Moreover, one may choose 
    \begin{equation}\label{vkformula}
        \vb_k^{(1)}=\frac{\mu_1|\mu_{k+1}|}{\sqrt{\mu_1^2+\cdots+\mu_k^2}\sqrt{\mu_1^2+\cdots+\mu_{k+1}^2}},\quad 1\le k\le d-1.
    \end{equation}
\end{proposition}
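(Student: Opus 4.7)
My plan is to construct the vectors $\vb_1,\dots,\vb_{d-1}$ explicitly so that each $\vb_k$ is supported only on its first $k+1$ coordinates. This sparsity pattern makes mutual orthogonality essentially automatic: for $j<k$ the vector $\vb_j$ lives in the span of $\eb_1,\dots,\eb_{j+1}$, while the part of $\vb_k$ which is not a multiple of the truncation $\pb_k:=(\mu_1,\dots,\mu_k,0,\dots,0)$ of $\mub$ sits in the $(k+1)$-th coordinate, where $\vb_j$ already vanishes.

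Concretely, set $s_k:=\sqrt{\mu_1^2+\cdots+\mu_k^2}$, so that $s_1=\mu_1>0$, $s_d=1$, and $\pb_k\cdot\mub=s_k^2$. Whenever $\mu_{k+1}\neq 0$ I would take
\[
\vb_k:=\frac{|\mu_{k+1}|}{s_k\,s_{k+1}}\,\pb_k\;-\;\sgn(\mu_{k+1})\,\frac{s_k}{s_{k+1}}\,\eb_{k+1},
\]
and in the degenerate case $\mu_{k+1}=0$ (where $s_{k+1}=s_k$ and both coefficients above vanish) I would set $\vb_k:=\eb_{k+1}$; the claimed first-component formula returns $0$ in this case, consistent with $\eb_{k+1}^{(1)}=0$.

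The remaining verifications are short and mechanical. Unit norm follows from $\mu_{k+1}^2/s_{k+1}^2+s_k^2/s_{k+1}^2=1$. Orthogonality to $\mub$ follows from $\pb_k\cdot\mub=s_k^2$ and immediate cancellation. For mutual orthogonality, when $j<k$ the support of $\vb_j$ lies in coordinates $1,\dots,j+1$, where $\vb_k$ coincides with $\tfrac{|\mu_{k+1}|}{s_k s_{k+1}}\pb_k$; restricting $\pb_k$ to these coordinates gives the same vector as restricting $\mub$, so $\vb_j\cdot\vb_k$ reduces to a scalar multiple of $\vb_j\cdot\mub=0$. Reading off the first coordinate of $\vb_k$ yields exactly the formula in the statement, and the factor $|\mu_{k+1}|/(s_ks_{k+1})\ge 0$ ensures $\vb_k^{(1)}\ge 0$. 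Orthogonality of $\mub$ to all the $\vb_k$'s together with unit length then upgrades the collection $\{\vb_1,\dots,\vb_{d-1},\mub\}$ to an orthonormal basis of $\R^d$.

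There is no conceptual obstacle. The only point requiring care is the piecewise definition, which is needed so that the construction remains well-defined (and the first-coordinate formula still holds) for unit vectors $\mub$ with some $\mu_{k+1}=0$; this is bookkeeping rather than substance.
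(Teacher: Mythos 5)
Your proof is correct, and it arrives at the same family of vectors as the paper, but by a different route. The paper proceeds by induction on the dimension $d$: at each step it normalizes the first $d$ coordinates of $\mub\in\S^d$, applies the inductive hypothesis in $\R^d$, pads the resulting vectors with a zero, and appends one new vector $\vb_d=\bigl(|\mu_{d+1}|\tilde\mub,\,-\sgn(\mu_{d+1})\sqrt{1-\mu_{d+1}^2}\bigr)$. Unwinding that recursion produces exactly your closed form $\vb_k=\frac{|\mu_{k+1}|}{s_k s_{k+1}}\pb_k-\sgn(\mu_{k+1})\frac{s_k}{s_{k+1}}\eb_{k+1}$, so the two constructions are really the same object; what you gain by writing it non-recursively is that all the orthogonality checks become one-line computations resting on the triangular support of the $\vb_k$'s, rather than being delegated to an ``it is clear.'' You also gain something concrete: your treatment of the degenerate case $\mu_{k+1}=0$ (where you set $\vb_k:=\eb_{k+1}$) fixes a small gap in the paper's inductive step, where the displayed $\vb_d$ reduces to the zero vector under the standard convention $\sgn(0)=0$ and therefore fails to be a unit vector. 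So your proof is both correct and, on this point, slightly more careful than the paper's.
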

\begin{proof}
    We prove it by induction on \(d\). 
    
    If \(d=2\), then one may choose 
    \[\vb_1=\left\{\begin{aligned}
        &(\mu_2,-\mu_1),\quad\text{if}\ \mu_2\ge 0,\\
        &(-\mu_2,\mu_1),\quad\text{if}\ \mu_2<0.
    \end{aligned}\right.\]
    Then \(\vb_1,\mu\) form an orthonormal basis for \(\R^2\) and \(\vb_1^{(1)}=|\mu_2|\) as desired.

    Suppose the conclusion holds for some \(d\ge 2\). Given \(\mub=(\mu_1,\dots,\mu_{d+1})\in\S^{d}\) with \(\mu_1>0\), one has \(|\mu_{d+1}|<1\) and 
    \[\tilde{\mub}:=\frac{1}{\sqrt{1-\mu_{d+1}^2}}(\mu_1,\dots,\mu_d)\in \S^{d-1}.\]
    By the inductive hypothesis, there exist \(\wb_1,\dots,\wb_{d-1}\in\S^{d-1}\) such that \(\wb_k^{(1)}\ge 0\) for \(1\le k\le d-1\) and \(\wb_1,\dots,\wb_{d-1},\tilde{\mub}\) form an orthonormal basis for \(\R^d\).
    Define \(\vb_k:=(\wb_k,0)\in \S^{d}\) for \(1\le k\le d-1\) and \[\vb_d:=\left(|\mu_{d+1}|\tilde{\mub},-\sgn(\mu_{d+1})\sqrt{1-\mu_{d+1}^2}\right)\in\S^d.\]
    Then it is clear that \(\vb_1,\dots,\vb_d,\mub\) form an orthonormal basis for \(\R^{d+1}\). Moreover, if \(\wb_1^{(1)},\dots,\wb_{d-1}^{(1)}\) are assumed to have form in \eqref{vkformula}, then it can be checked that \(\vb_1^{(1)},\dots,\vb_d^{(1)}\) satisfy \eqref{vkformula} as well. By induction, the proposition holds for any \(d\ge 2\).
\end{proof}

\subsection{A useful limit concerning the Fourier symbol with respect to Riesz fractional kernel}
\label{subsec:Appendix3}
\begin{proposition}\label{prop:symblim}
    The following equality holds:
    \begin{equation}
        \lim_{\del\to 0} \int_{0}^{\infty} \frac{\del}{z^{2-\del}}\left(e^{2\pi i z}-1\right)dz = 2\pi i,
    \end{equation}
    that is, 
    \begin{equation}
        \lim_{\del\to 0} \int_{0}^{\infty} \frac{\del}{z^{2-\del}}\left(\cos(2\pi z)-1\right)dz = 0,\quad \lim_{\del\to 0} \int_{0}^{\infty} \frac{\del}{z^{2-\del}}\sin(2\pi z)dz = 2\pi.
    \end{equation}
    In addition, these three equalities hold with the upper limit \(\infty\) replaced by \(1\).
\end{proposition}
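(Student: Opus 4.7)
The plan is to reduce the three asserted limits to the classical Mellin--Fourier identities
\[\int_0^\infty u^{s-1}\sin u\,du = \Gamma(s)\sin(\pi s/2)\quad (0<s<1),\qquad \int_0^\infty u^{s-1}\cos u\,du=\Gamma(s)\cos(\pi s/2)\quad(0<s<1),\]
and then exploit $\delta\,\Gamma(\delta)=\Gamma(1+\delta)\to 1$ as $\delta\to 0^+$. As a preliminary sanity check, the integral $\int_0^\infty \delta z^{\delta-2}(e^{2\pi i z}-1)\,dz$ converges absolutely for $0<\delta<1$: near $0$ the factor $e^{2\pi i z}-1\sim 2\pi i z$ makes the integrand $O(z^{\delta-1})$, while near $\infty$ the integrand is $O(z^{\delta-2})$. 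After the rescaling $u=2\pi z$, all three claims reduce to computing
\[\delta(2\pi)^{1-\delta}\int_0^\infty \frac{\sin u}{u^{2-\delta}}\,du\quad\text{and}\quad \delta(2\pi)^{1-\delta}\int_0^\infty \frac{\cos u-1}{u^{2-\delta}}\,du\]
in the limit $\delta\to 0^+$.

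For the sine piece, integration by parts once (with $v'=u^{\delta-2}$, $u_1=\sin u$) gives
\[\int_0^\infty \frac{\sin u}{u^{2-\delta}}du=\frac{1}{1-\delta}\int_0^\infty u^{\delta-1}\cos u\,du=\frac{\Gamma(\delta)\cos(\pi\delta/2)}{1-\delta},\]
where the boundary terms vanish because $\sin u\cdot u^{\delta-1}\to 0$ at both endpoints for $0<\delta<1$. Multiplying by $\delta(2\pi)^{1-\delta}$ and using $\delta\Gamma(\delta)=\Gamma(1+\delta)\to 1$ yields the limit $2\pi$. For the cosine piece, the same integration-by-parts trick (now with $v'=u^{\delta-2}$, $u_1=\cos u-1$) is legitimate because $(\cos u-1)u^{\delta-1}\to 0$ at $0$ (double zero beats $u^{\delta-1}$) and at $\infty$; it gives
\[\int_0^\infty \frac{\cos u-1}{u^{2-\delta}}du=\frac{1}{\delta-1}\int_0^\infty u^{\delta-1}\sin u\,du=\frac{\Gamma(\delta)\sin(\pi\delta/2)}{\delta-1}.\]
Multiplying by $\delta(2\pi)^{1-\delta}$ gives $(2\pi)^{1-\delta}\Gamma(1+\delta)\sin(\pi\delta/2)/(\delta-1)$, which tends to $0$ since $\sin(\pi\delta/2)\to 0$. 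Putting these together proves the three limits on $(0,\infty)$.

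For the variant with upper limit $1$, it suffices to show the corresponding tail integrals $\int_1^\infty$ contribute $o(1)$ as $\delta\to 0^+$. The cosine tail is bounded directly by $|\cos(2\pi z)-1|\le 2$, so
\[\left|\int_1^\infty \frac{\delta(\cos(2\pi z)-1)}{z^{2-\delta}}dz\right|\le 2\delta\int_1^\infty z^{\delta-2}dz=\frac{2\delta}{1-\delta}\longrightarrow 0.\]
A single integration by parts on $\int_1^\infty z^{\delta-2}\sin(2\pi z)\,dz$ produces a bounded boundary term at $z=1$ plus an absolutely convergent integral of $z^{\delta-3}\cos(2\pi z)$, both bounded uniformly in $\delta\in(0,1/2)$; multiplying by $\delta$ again gives $o(1)$. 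Hence the $(0,1)$ versions follow from the $(0,\infty)$ versions by subtraction.

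The main technical point is the rigorous use of the Mellin--Fourier identities, particularly the manipulation of the singular factor $u^{\delta-1}$ in the integration by parts for the cosine integral; everything else is asymptotic bookkeeping built on $\Gamma(1+\delta)\to 1$ and $\sin(\pi\delta/2)\to 0$.
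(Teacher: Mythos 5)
Your proof is correct, but it takes a genuinely different route from the paper. The paper proves the claim elementarily: it first kills the tail $\int_1^\infty$ by the crude bound $|e^{2\pi iz}-1|\le 2$ (giving $O(\delta)$), then handles the cosine part on $(0,1)$ via $|1-\cos x|\le x^2/2$, and handles the sine part on $(0,1)$ by exploiting the ``approximate identity'' structure $\int_0^1 \delta z^{\delta-1}\,dz = 1$ together with the continuity of $\sin(2\pi z)/(2\pi z)$ at the origin and a split-domain $\varepsilon$-argument. Your approach instead produces exact closed-form expressions $\frac{\Gamma(\delta)\cos(\pi\delta/2)}{1-\delta}$ and $\frac{\Gamma(\delta)\sin(\pi\delta/2)}{\delta-1}$ for the $(0,\infty)$ integrals via one integration by parts plus the classical Mellin--Fourier identities, and then reads off the limits from $\Gamma(1+\delta)\to 1$ and $\sin(\pi\delta/2)\to 0$; it also reverses the order of the paper (you prove the $(0,\infty)$ version and subtract the tail, whereas the paper proves the $(0,1)$ version and adds the tail). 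Your signs, boundary terms, and asymptotics all check out. The trade-off is that the paper's argument is entirely self-contained and elementary, while yours is cleaner and more informative (it gives the value for each $\delta$, not just the limit) at the cost of quoting the Mellin--Fourier formulas, which are nontrivial facts requiring their own proof. One minor remark: for the sine tail on $[1,\infty)$ you need not integrate by parts, since $|\sin(2\pi z)|\le 1$ already gives $\bigl|\int_1^\infty \delta z^{\delta-2}\sin(2\pi z)\,dz\bigr| \le \delta/(1-\delta)\to 0$ directly, just as for the cosine tail.
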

\begin{proof}
    Note that the last assertion follows from the estimate
    \[\left|\int_{1}^{\infty} \frac{\del}{z^{2-\del}}\left(e^{2\pi i z}-1\right)dz\right|\le 2\int_{1}^{\infty} \frac{\del}{z^{2-\del}}dz\le 2\del\int_{1}^{\infty} \frac{1}{z^{3/2}}dz\]
    for \(\del\in (0,1/2)\), provided the following two equalities 
    \begin{equation}\label{coslim}
        \lim_{\del\to 0} \int_{0}^{1} \frac{\del}{z^{2-\del}}\left(\cos(2\pi z)-1\right)dz = 0
    \end{equation}
    and 
    \begin{equation}\label{sinlim}
        \lim_{\del\to 0} \int_{0}^{1} \frac{\del}{z^{2-\del}}\sin(2\pi z)dz = 2\pi
    \end{equation}
    hold. Using the inequality \(|1-\cos(x)|\le x^2/2\) for any \(x\in\R\), it is clear that \eqref{coslim} holds as 
    \[\left|\int_{0}^{1} \frac{\del}{z^{2-\del}}\left(\cos(2\pi z)-1\right)dz\right|\le 2\pi^2\del \int_{0}^{1} z^\del dz\le 2\pi^2\del\to 0,\quad\del\to 0.\]
    To show \eqref{sinlim}, first notice that \(\int_{0}^{1}\frac{\del}{z^{1-\del}}dz = 1\). Since \(\lim_{x\to 0^+} \sin(x)/x=1\), for any \(\ep>0\), there exists \(N_0=N_0(\ep)\in (0,1)\) such that for any \(z\in (0,N_0)\),
    \[\left|\frac{\sin(2\pi z)}{2\pi z} - 1\right|<\ep.\]
    Then 
    \[\left|\int_{0}^{N_0}\frac{\del}{z^{1-\del}}\left(\frac{\sin(2\pi z)}{2\pi z} - 1\right)dz\right|<\ep\int_{0}^{1} \frac{\del}{z^{1-\del}}dz=\ep.\]
    On the other hand, for \(\del\in (0,1)\) one has
    \[\left|\int_{N_0}^{1} \frac{\del}{z^{1-\del}}\left(\frac{\sin(2\pi z)}{2\pi z} - 1\right)dz\right|\le 2 \int_{N_0}^{1}\frac{\del}{z}dz=-2\del \ln(N_0)\to 0,\quad\del\to 0.\]
    Thus, there exists \(\del_0=\del_0(\ep)\) such that for \(\del\in (0,\del)\)
    \[\left|\int_{0}^{1} \frac{\del}{z^{2-\del}}\sin(2\pi z)dz-2\pi\right|=2\pi\left|\int_{0}^{1}\frac{\del}{z^{1-\del}}\left(\frac{\sin(2\pi z)}{2\pi z} - 1\right)dz\right|<2\ep.\]
    This proves \eqref{sinlim} and the whole proposition.
\end{proof}

\subsection{An approximation result}
\begin{lemma}\label{lem:one_sided_approx}
    Let $\Om$ be a bounded polygonal domain in $\R^d$ and \(\{\mathcal{T}_h\}_{h>0}\) be a quasi-uniform mesh of size \(h\) on $\Om$. Given \(\alpha,\beta\in C(\overline{\Om})\) 
    such that $\alpha(\xb)<\beta(\xb)$ for all $\xb\in\overline{\Om}$, 
    define \(Z_{\mathrm{ad}}:=\{f\in L^2(\Om):\ \alpha(\xb)\le f(\xb)\le \beta(\xb),\ a.e.\ x\in\Om\}\) and denote \(Z_h\) the piecewise constant functions with respect to the mesh \(\{\mathcal{T}_h\}_{h>0}\), i.e.,
    \[Z_h:=\{z_h\in L^\infty(\Om)|\ z_h|_T\in \mathcal{P}_0(T),\ \forall T\in \mathcal{T}_h\}.\]
    Then for any $q\in Z_{\mathrm{ad}}$, there exists $\{q_h\}_{h>0}\subset Z_{h}\cap Z_{\mathrm{ad}}$ such that $q_h\to q$ in $L^2(\Om)$.
\end{lemma}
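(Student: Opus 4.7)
\textbf{Proof plan for \Cref{lem:one_sided_approx}.} The strategy is the natural one: project cellwise averages onto tightened intervals. Concretely, for $T \in \mathcal{T}_h$ I will set $\bar{q}_T := |T|^{-1}\int_T q(\xb)\,d\xb$ and then define
\[
\alpha_T := \max_{\xb \in \overline{T}} \alpha(\xb), \qquad \beta_T := \min_{\xb \in \overline{T}} \beta(\xb),
\]
so that any constant $c_T \in [\alpha_T, \beta_T]$ on $T$ produces a function satisfying $\alpha \leq c_T \leq \beta$ pointwise on $T$. The candidate will then be $q_h|_T := \max\bigl(\alpha_T, \min(\beta_T, \bar{q}_T)\bigr)$, i.e.\ the projection of the cell average onto the tightened interval. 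By construction $q_h \in Z_h \cap Z_{\mathrm{ad}}$ provided $\alpha_T \le \beta_T$.

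\textbf{Feasibility of the tightened interval.} Since $\alpha, \beta \in C(\overline{\Omega})$ and $\overline{\Omega}$ is compact, both functions are uniformly continuous with moduli of continuity $\omega_\alpha, \omega_\beta$, and $\min_{\overline{\Omega}}(\beta-\alpha) =: \delta_0 > 0$. For $\xb, \yb \in \overline{T}$ with $|\xb-\yb|\le \mathrm{diam}(T) \le Ch$, one has $|\alpha(\xb)-\alpha(\yb)|\le \omega_\alpha(Ch)$ and likewise for $\beta$. Hence $\beta_T - \alpha_T \ge \delta_0 - \omega_\alpha(Ch) - \omega_\beta(Ch) \ge \delta_0/2$ for all $h$ sufficiently small, uniformly in $T\in\mathcal{T}_h$, which gives $\alpha_T\le \beta_T$ and secures $q_h\in Z_{\mathrm{ad}}$.

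\textbf{$L^2$ convergence.} Write $q_h - q = (q_h - \bar{q}) + (\bar{q} - q)$ where $\bar{q}\in L^2(\Om)$ is the piecewise constant function whose value on $T$ is $\bar{q}_T$. The second term goes to zero in $L^2(\Omega)$ by the standard density of piecewise constants on a quasi-uniform family (e.g., a Lebesgue differentiation / mollification argument applied to $q\in L^2(\Omega)$). For the first term I use the feasibility slack of the average: since $q(\xb)\ge \alpha(\xb)\ge \alpha_T - \omega_\alpha(Ch)$ a.e.\ on $T$, averaging gives $\bar{q}_T \ge \alpha_T - \omega_\alpha(Ch)$, and similarly $\bar{q}_T \le \beta_T + \omega_\beta(Ch)$. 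As the projection onto $[\alpha_T,\beta_T]$ is 1-Lipschitz, $|q_h|_T - \bar{q}_T| \le \max\bigl(\omega_\alpha(Ch), \omega_\beta(Ch)\bigr) =: \omega(h)$, whence
\[
\|q_h - \bar{q}\|_{L^2(\Omega)}^2 \le \omega(h)^2 |\Omega| \to 0 \quad \text{as } h\to 0.
\]
Combining the two estimates yields $q_h \to q$ in $L^2(\Omega)$.

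\textbf{Expected obstacle.} There is no serious obstacle; the only delicate point is ensuring that the pointwise box constraint is preserved after projecting the cell averages, and that the tightened intervals $[\alpha_T,\beta_T]$ remain nonempty uniformly in $T$. Both are handled by combining the strict gap $\beta-\alpha\ge\delta_0>0$ with uniform continuity of $\alpha,\beta$, which is the reason the hypothesis $\alpha(\xb)<\beta(\xb)$ is imposed pointwise on $\overline{\Omega}$ rather than merely a.e.
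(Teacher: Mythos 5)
Your proposal is correct and follows essentially the same route as the paper's own proof: the same candidate $q_h$ (cell average projected onto the tightened interval $[\max_{\overline T}\alpha,\min_{\overline T}\beta]$), the same split $q_h-q=(q_h-\bar q)+(\bar q-q)$, density of piecewise constants for the second piece, and uniform continuity of $\alpha,\beta$ to bound the first piece by the cellwise oscillation. The only cosmetic difference is that the paper organizes the first estimate by partitioning $\mathcal{T}_h$ into three classes of cells, while you phrase it via the $1$-Lipschitz property of the interval projection together with a modulus-of-continuity bound on how far $\bar q_T$ can fall outside $[\alpha_T,\beta_T]$ — the resulting bounds are the same. Your explicit verification that $\beta_T-\alpha_T\ge\delta_0/2>0$ for small $h$ is a welcome addition; the paper relies on this implicitly (it is noted in the surrounding text that $Z_h\cap Z_{\mathrm{ad}}\neq\emptyset$ for $h<h_0$).
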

\begin{proof}
    Without loss of generality, we assume that $\alpha(\xb)<\beta(\xb)$ for all $\xb\in\overline{\Om}$. Given $q\in L^2(\Om)$, define the piecewise constant function $p_h\in L^2(\Om)$ by
    \[
    p_h(\xb):=\frac{1}{|T|}\int_T q(\yb)d\yb,\quad\text{if}\ \xb\in \text{int}(T),\ T\in\cT_h.
    \]
    where $|T|$ denotes the Lebesgue measure of $T$. It is well-known that $p_h\to q$ in $L^2(\Om)$. However, $p_h$ is not necessarily in $Z_h$. Consider another piecewise constant $q_h\in L^2(\Om)$ given by
    \[
    q_h(\xb):=\min\left\{\max\left\{p_h(\xb),\sup_{\yb\in \overline{T}}\alpha(\yb)\right\},\inf_{\yb\in \overline{T}}\beta(\yb)\right\},\quad\text{if}\ \xb\in \text{int}(T),\ T\in\cT_h.
    \]
    It is clear that $q_h\in Z_{h}\cap Z_{\mathrm{ad}}$. To show $q_h\to q$ in $L^2(\Om)$, it suffices to show that $\|q_h-p_h\|_{L^2(\Om)}\to 0$ as $h\to 0$. To that end, consider the partition $\cT_h=\cT^1_h\cup \cT_h^2\cup \cT_h^3$ of $\cT_h$ with respect to $q$ defined by
    \[\begin{split}
        \cT_h^1&:=\left\{T\in\cT_h:\frac{1}{|T|}\int_T q(\yb)d\yb\le \sup_{\yb\in \overline{T}}\alpha(\yb) \right\},\\
        \cT_h^2&:=\left\{T\in\cT_h:\frac{1}{|T|}\int_T q(\yb)d\yb\ge \inf_{\yb\in \overline{T}}\beta(\yb) \right\},\\
        \cT_h^3&:=\left\{T\in\cT_h:\sup_{\yb\in \overline{T}}\alpha(\yb)<\frac{1}{|T|}\int_T q(\yb)d\yb< \inf_{\yb\in \overline{T}}\beta(\yb) \right\}.
    \end{split}
    \]
    Then for $T\in  \cT_h^3$, $p_h(\xb) = q_{h}(\xb)$ for all $x\in T$ and, therefore,   
    \[
    \begin{split}
        &\, \|p_h-q_h\|_{L^2(\Om)}^2\\
        =&\sum_{T\in \cT_h}\int_T |p_h(\xb)-q_h(\xb)|^2d\xb\\
        =&\sum_{T\in\cT_h^1}|T|\left(\frac{1}{|T|}\int_T q(\yb)d\yb- \sup_{\yb\in \overline{T}}\alpha(\yb)\right)^2+\sum_{T\in\cT_h^2}|T|\left(\frac{1}{|T|}\int_T q(\yb)d\yb- \inf_{\yb\in \overline{T}}\beta(\yb)\right)^2\\
        \le&\sum_{T\in\cT_h^1}|T|\left(\sup_{\yb\in \overline{T}}\alpha(\yb)-\inf_{\yb\in \overline{T}}\alpha(\yb)\right)^2+\sum_{T\in\cT_h^2}|T|\left(\sup_{\yb\in \overline{T}}\beta(\yb)- \inf_{\yb\in \overline{T}}\beta(\yb)\right)^2,
    \end{split}
    \]
    where in the last step we used $\alpha(\xb)\le q(\xb)\le \beta(\xb)$ for a.e. $\xb\in\Om$. Since $\alpha$ and $\beta$ are continuous on $\overline{\Om}$, they are uniformly continuous on $\overline{\Om}$. Then for any $\ep>0$, there exists $\tau>0$ such that for any $0<h<\tau$ and $T\in\cT_h$, \[
    \sup_{\yb\in \overline{T}}\alpha(\yb)-\inf_{\yb\in \overline{T}}\alpha(\yb)<\ep,\quad \sup_{\yb\in \overline{T}}\beta(\yb)- \inf_{\yb\in \overline{T}}\beta(\yb)<\ep.
    \]
    Therefore, \[
    \|p_h-q_h\|_{L^2(\Om)}^2\le \ep^2\sum_{T\in\cT_h^1}|T|+\sum_{T\in\cT_h^2}|T|\le \ep^2|\Om|.
    \]
    This implies $\|q_h-p_h\|_{L^2(\Om)}\to 0$ as $h\to 0$. Thus $q_h\to q$ in $L^2(\Om)$ as $h\to 0$.
\end{proof}

\bibliographystyle{abbrv} 

\bibliography{mylibbetterbibtex}

\begin{thebibliography}{10}

\bibitem{antil2022optimal}
H.~Antil, T.~Brown, R.~Khatri, A.~Onwunta, D.~Verma, and M.~Warma.
\newblock Chapter 3 - {{Optimal}} control, numerics, and applications of fractional {{PDEs}}.
\newblock In E.~Tr{\'e}lat and E.~Zuazua, editors, {\em Handbook of {{Numerical Analysis}}}, volume~23 of {\em Numerical {{Control}}: {{Part A}}}, pages 87--114. Elsevier, Jan. 2022.

\bibitem{antil2017note}
H.~Antil, J.~Pfefferer, and M.~Warma.
\newblock A note on semilinear fractional elliptic equation: {A}nalysis and discretization.
\newblock {\em ESAIM: Mathematical Modelling and Numerical Analysis}, 51(6):2049--2067, 2017.

\bibitem{antil2020optimal}
H.~Antil, D.~Verma, and M.~Warma.
\newblock Optimal control of fractional elliptic {PDEs} with state constraints and characterization of the dual of fractional-order {S}obolev spaces.
\newblock {\em Journal of Optimization Theory and Applications}, 186(1):1--23, 2020.

\bibitem{arroyo2022functional}
A.~{Arroyo-Rabasa}.
\newblock Functional and variational aspects of nonlocal operators associated with linear {{PDEs}}.
\newblock {\em arXiv preprint arXiv:2210.13161}, 2022.

\bibitem{bellido2023nonlocal}
J.~C. Bellido, J.~Cueto, and C.~{Mora-Corral}.
\newblock Non-local gradients in bounded domains motivated by continuum mechanics: {{Fundamental}} theorem of calculus and embeddings.
\newblock {\em Advances in Nonlinear Analysis}, 12(1), Jan. 2023.

\bibitem{bellido2024nonlocal}
J.~C. Bellido, C.~{Mora-Corral}, and H.~Sch{\"o}nberger.
\newblock Nonlocal gradients: {{Fundamental}} theorem of calculus, {{Poincar}}{\'e} inequalities and embeddings.
\newblock {\em arXiv preprint arXiv:2402.16487}, 2024.

\bibitem{bourgain2001another}
J.~Bourgain, H.~Brezis, and P.~Mironescu.
\newblock Another look at {{Sobolev}} spaces.
\newblock 2001.

\bibitem{brezis2011functional}
H.~Brezis.
\newblock {\em Functional {{Analysis}}, {{Sobolev Spaces}} and {{Partial Differential Equations}}}.
\newblock {Springer}, {New York, NY}, 2011.

\bibitem{burkovska2021optimization}
O.~Burkovska, C.~Glusa, and M.~D'Elia.
\newblock An optimization-based approach to parameter learning for fractional type nonlocal models.
\newblock {\em Computers \& Mathematics with Applications}, 2021.

\bibitem{cueto2023variational}
J.~Cueto, C.~Kreisbeck, and H.~Sch{\"o}nberger.
\newblock A variational theory for integral functionals involving finite-horizon fractional gradients.
\newblock {\em Fractional Calculus and Applied Analysis}, 26(5):2001--2056, 2023.

\bibitem{cueto2024gamma}
J.~Cueto, C.~Kreisbeck, and H.~Sch{\"o}nberger.
\newblock ${{\Gamma}}$-convergence involving nonlocal gradients with varying horizon: {{Recovery}} of local and fractional models.
\newblock {\em arXiv preprint arXiv:2404.18509}, 2024.

\bibitem{d2019priori}
M.~D'elia, C.~Glusa, and E.~Ot{\'a}rola.
\newblock A priori error estimates for the optimal control of the integral fractional {L}aplacian.
\newblock {\em SIAM Journal on Control and Optimization}, 57(4):2775--2798, 2019.

\bibitem{delia2022connections}
M.~D'Elia, M.~Gulian, T.~Mengesha, and J.~M. Scott.
\newblock Connections between nonlocal operators: From vector calculus identities to a fractional {{Helmholtz}} decomposition.
\newblock {\em Fractional Calculus and Applied Analysis}, 25(6):2488--2531, Dec. 2022.

\bibitem{d2014optimal}
M.~D'Elia and M.~Gunzburger.
\newblock Optimal distributed control of nonlocal steady diffusion problems.
\newblock {\em SIAM Journal on Control and Optimization}, 52(1):243--273, 2014.

\bibitem{du2024asymptotically}
Q.~Du, J.~M. Scott, and X.~Tian.
\newblock Asymptotically compatible schemes for nonlinear variational models via {{Gamma-convergence}} and applications to nonlocal problems.
\newblock {\em arXiv preprint arXiv:2402.07749}, 2024.

\bibitem{du2018stability}
Q.~Du and X.~Tian.
\newblock Stability of {{Nonlocal Dirichlet Integrals}} and {{Implications}} for {{Peridynamic Correspondence Material Modeling}}.
\newblock {\em SIAM Journal on Applied Mathematics}, 78(3):1536--1552, Jan. 2018.

\bibitem{MR4118336}
A.~Evgrafov and J.~C. Bellido.
\newblock Nonlocal control in the conduction coefficients: well-posedness and convergence to the local limit.
\newblock {\em SIAM J. Control Optim.}, 58(4):1769--1794, 2020.

\bibitem{fonseca2006modern}
I.~Fonseca and G.~Leoni.
\newblock {\em Modern {{Methods}} in the {{Calculus}} of {{Variations}}: \${{L}}\^{}p\$ {{Spaces}}}.
\newblock Springer {{Monographs}} in {{Mathematics}}. {Springer New York}, {New York, NY}, 2006.

\bibitem{foster2010viscoplasticity}
J.~T. Foster, S.~A. Silling, and W.~W. Chen.
\newblock Viscoplasticity using peridynamics.
\newblock {\em International Journal for Numerical Methods in Engineering}, 81(10):1242--1258, 2010.

\bibitem{han2023nonlocal}
Z.~Han and X.~Tian.
\newblock Nonlocal half-ball vector operators on bounded domains: {{Poincar{\'e}}} inequality and its applications.
\newblock {\em Mathematical Models and Methods in Applied Sciences}, 33(12):2507--2556, Nov. 2023.

\bibitem{lee2020nonlocal}
H.~Lee and Q.~Du.
\newblock Nonlocal gradient operators with a nonspherical interaction neighborhood and their applications.
\newblock {\em ESAIM: Mathematical Modelling and Numerical Analysis}, 54(1):105--128, Jan. 2020.

\bibitem{mengesha2012nonlocal}
T.~Mengesha.
\newblock Nonlocal {{Korn-type}} characterization of {{Sobolev}} vector fields.
\newblock {\em Communications in Contemporary Mathematics}, 14(04):1250028, Aug. 2012.

\bibitem{mengesha2014nonlocal}
T.~Mengesha and Q.~Du.
\newblock Nonlocal {{Constrained Value Problems}} for a {{Linear Peridynamic Navier Equation}}.
\newblock {\em Journal of Elasticity}, 116(1):27--51, June 2014.

\bibitem{mengesha2016characterization}
T.~Mengesha and Q.~Du.
\newblock Characterization of function spaces of vector fields and an application in nonlinear peridynamics.
\newblock {\em Nonlinear Analysis}, 140:82--111, July 2016.

\bibitem{mengesha2023optimal}
T.~Mengesha, A.~J. Salgado, and J.~M. Siktar.
\newblock On the {{Optimal Control}} of a {{Linear Peridynamics Model}}.
\newblock {\em Applied Mathematics \& Optimization}, 88(3):70, Aug. 2023.

\bibitem{mengesha2015localization}
T.~Mengesha and D.~Spector.
\newblock Localization of nonlocal gradients in various topologies.
\newblock {\em Calculus of Variations and Partial Differential Equations}, 52(1-2):253--279, Jan. 2015.

\bibitem{munoz2022local}
J.~Mu{\~n}oz.
\newblock Local and nonlocal optimal control in the source.
\newblock {\em Mediterranean Journal of Mathematics}, 19(1):1--24, 2022.

\bibitem{otarola2019maximum}
E.~Ot{\'a}rola, R.~Rankin, and A.~J. Salgado.
\newblock Maximum--norm a posteriori error estimates for an optimal control problem.
\newblock {\em Computational Optimization and Applications}, 73(3):997--1017, 2019.

\bibitem{ponce2004estimate}
A.~Ponce.
\newblock An estimate in the spirit of {{Poincar{\'e}}}'s inequality.
\newblock {\em Journal of the European Mathematical Society}, pages 1--15, 2004.

\bibitem{shieh2015new}
T.-T. Shieh and D.~E. Spector.
\newblock On a new class of fractional partial differential equations.
\newblock {\em Advances in Calculus of Variations}, 8(4), Jan. 2015.

\bibitem{shieh2018new}
T.-T. Shieh and D.~E. Spector.
\newblock On a new class of fractional partial differential equations {{II}}.
\newblock {\em Advances in Calculus of Variations}, 11(3):289--307, July 2018.

\bibitem{silling2000reformulation}
S.~A. Silling.
\newblock Reformulation of elasticity theory for discontinuities and long-range forces.
\newblock {\em Journal of the Mechanics and Physics of Solids}, 48(1):175--209, Jan. 2000.

\bibitem{silling2017stability}
S.~A. Silling.
\newblock Stability of peridynamic correspondence material models and their particle discretizations.
\newblock {\em Computer Methods in Applied Mechanics and Engineering}, 322:42--57, Aug. 2017.

\bibitem{SEWX07}
S.~A. Silling, M.~Epton, O.~Weckner, J.~Xu, and E.~Askari.
\newblock Peridynamic states and constitutive modeling.
\newblock {\em Journal of Elasticity}, 88(2):151--184, 2007.

\bibitem{tian2014asymptotically}
X.~Tian and Q.~Du.
\newblock Asymptotically {{Compatible Schemes}} and {{Applications}} to {{Robust Discretization}} of {{Nonlocal Models}}.
\newblock {\em SIAM Journal on Numerical Analysis}, 52(4):1641--1665, Jan. 2014.

\bibitem{tian2015nonconforming}
X.~Tian and Q.~Du.
\newblock Nonconforming {{Discontinuous Galerkin Methods}} for {{Nonlocal Variational Problems}}.
\newblock {\em SIAM Journal on Numerical Analysis}, 53(2):762--781, Jan. 2015.

\bibitem{troltzsch2010optimal}
F.~Tr{\"o}ltzsch.
\newblock {\em Optimal Control of Partial Differential Equations: Theory, Methods, and Applications}.
\newblock Graduate {{Studies}} in {{Mathematics}} 1065-7339 (Print). {American Mathematical Society}, {Providence, R.I}, online-ausg edition, 2010.

\bibitem{tupek2014extended}
M.~R. Tupek and R.~Radovitzky.
\newblock An extended constitutive correspondence formulation of peridynamics based on nonlinear bond-strain measures.
\newblock {\em Journal of the Mechanics and Physics of Solids}, 65:82--92, 2014.

\bibitem{warren2009nonordinary}
T.~L. Warren, S.~A. Silling, A.~Askari, O.~Weckner, M.~A. Epton, and J.~Xu.
\newblock A non-ordinary state-based peridynamic method to model solid material deformation and fracture.
\newblock {\em International Journal of Solids and Structures}, 46(5):1186--1195, Mar. 2009.

\bibitem{wloka1987partial}
J.~Wloka.
\newblock {\em Partial {{Differential Equations}}}.
\newblock {Cambridge University Press}, {Cambridge}, 1987.

\end{thebibliography}

\end{document}